\newcommand{\la}{\lambda}
\DeclareMathOperator{\nil}{Nil}
\DeclareMathOperator{\im}{Im}
\DeclareMathOperator{\Hom}{Hom}
\DeclareMathOperator{\End}{End}
\DeclareMathOperator{\lin}{Lin}
\DeclareMathOperator{\tr}{Tr}
\DeclareMathOperator{\btr}{btr}
\DeclareMathOperator{\Id}{Id}
\definecolor{bloesh}{RGB}{160, 194, 185}
\definecolor{roo}{RGB}{241,23,70}
\newcommand{\bl}[1]{{\color{blue}#1}}
\newcommand{\geel}[1]{{\color{yellow}#1}}
\newcommand{\pa}[1]{{\color{purple}#1}}
\newcommand{\gr}[1]{{\color{green}#1}}
\newcommand{\roo}[1]{{\color{roo}#1}}
\newcommand{\gre}[1]{{\color{gray}#1}}
\newcommand{\ora}[1]{{\color{orange}#1}}
\def\tr{\mathop{\rm tr}\nolimits}
\newfont{\gothic}{eufm10}
\theoremstyle{plain}
\newtheorem{thr}{Theorem}[section]
\newtheorem{lem}[thr]{Lemma}
\newtheorem{cor}[thr]{Corollary}
\newtheorem{prop}[thr]{Proposition}
\theoremstyle{definition}
\newtheorem{defi}[thr]{Definition}
\newtheorem{ex}[thr]{Example}
\theoremstyle{remark}
\newtheorem{remk}[thr]{Remark}
\theoremstyle{remark}
\newcommand{\field}[1]{\mathbb{#1}}
\newcommand{\R}{\field{R}}
\newcommand{\N}{\field{N}}
\newcommand{\C}{\field{C}}
\newcommand{\Z}{\field{Z}}
\definecolor{bloesh}{RGB}{160, 194, 185}
\definecolor{roo}{RGB}{241,23,70}
\title{Circulant Type Formulas for the Eigenvalues of Linear Network Maps}
\author{Eddie Nijholt\footnote{\mbox{Department of Mathematics, University of Illinois, \href{mailto:eddie.nijholt@gmail.com}{eddie.nijholt@gmail.com} }  }, Lee DeVille\footnote{\mbox{Department of Mathematics, University of Illinois, \href{mailto:rdeville@illinois.edu}{rdeville@illinois.edu} }} }
\date{\today}
\begin{document}

\maketitle

\begin{abstract} \label{abstract}
Given an admissible map $\gamma_f$ for a homogeneous network $\mathcal{N}$, it is known that the Jacobian $D{\gamma_f}(x)$ around a fully synchronous point $x = (x_0, \dots, x_0)$ is again an admissible map for $\mathcal{N}$. Motivated by this, we study the spectra of linear admissible maps for homogeneous networks. In particular, we define so-called network multipliers. These are (relatively small) matrices that depend linearly on the coefficients of the response function, and whose eigenvalues together make up the spectrum of the corresponding admissible map. More precisely, given a network $\mathcal{N}$, we define a finite set of network multipliers $(\Lambda^l)_{l=1}^k$ and a class of networks $\mathfrak{C}$ containing $\mathcal{N}$. This class is furthermore closed under taking quotient networks, subnetworks and disjoint unions. We then show that the eigenvalues of an admissible map for any network in $\mathfrak{C}$ are given by those of (a subset of) the network multipliers, with fixed multiplicities $(m_l)_{l=1}^k$ and independently of the given (finite dimensional) phase space of a node. The coefficients of all the network multipliers of $\mathfrak{C}$ are furthermore linearly independent, which implies that one may find the multiplicities $(m_l)_{l=1}^k$ by simply expressing the trace of an admissible map as a linear combination of the traces of the multipliers. In particular, we will give examples of networks where the network multipliers need not be constructed, but can be determined by looking at small networks in $\mathfrak{C}$. We also show that network multipliers are multiplicative with respect to composition of linear maps.
\end{abstract}


\section{Introduction} \label{Introduction}
Networks play an important role in many of the sciences. They are used to understand the brain \cite{ermentrout, sporns}, the spread of a disease \cite{Tiago} or the activity on social media \cite{twit}, among many other examples. Unsurprisingly, network dynamical systems have garnered a great deal of attention in recent years, see for example \cite{simplereal1, simplereal2, pikovsky, torok, field1, field2}. Even though the networks involved in for example the brain, the internet or systems biology are typically of a huge size, networks of just a few cells have been found to exhibit surprisingly complex dynamical behaviour as well, \cite{Leite, anto4, cen, jeroen, dias, elmhirst, krupa, synbreak2, curious, leite2}. What is more, relatively small networks have been applied successfully to describe a wide range of phenomena, such as animal locomotion \cite{loco}, binocular rivalry \cite{Binocular} and homeostasis \cite{Homeostasis}. Smaller networks may also be used to understand larger ones through their role as motifs. A good example of this is the class of so-called feedforward networks. They are known to function as amplifiers or filters \cite{claire, RinkSanders2}, and have been found to retain this function, at least in a theoretical setting, when `pasted to' other networks \cite{proj}. \\
In this article we present a technique that greatly reduces the cost of finding the eigenvalues of a linear network map. This technique also works for families of admissible maps, and we will give examples of networks for which it gives the eigenvalues as linear functions of the coefficients of the admissible map. Moreover, it will be clear that when this technique does not yield the eigenvalues explicitly, then the algebra of admissible maps contains a matrix algebra. In particular, this implies that no linear expressions of the eigenvalues exist. In that case, our technique reduces the problem of finding the spectrum of an admissible map to the same problem for a set of smaller matrices without additional structure. More precisely, this is under the assumption that the admissible maps of the network indeed form an algebra. It will be clear that this may always be assumed after adding additional arrows to the network. \\
The results in this article are based on the construction of the so-called fundamental network and on techniques from representation theory. To keep the article as accessible as possible however, we have postponed the more technical proofs to the last sections.

\subsection{Circulant Matrices}
The aim of this article is to generalise results for so-called \textit{(block) circulant matrices}. In technical terms, a circulant matrix is an $n \times n$ (block) matrix $\mathcal{A} = (A_{i,j})$ such that $A_{i,j} = A_{k,l}$ whenever $i-j = k-l$ modulo $n$. More visually, this means that every row is obtained from the one above it by cyclically shifting all entries one place to the right (meaning that the entry that was on the far right is now placed on the far left). In particular, for increasing $n$ these matrices look like
\begin{equation} \nonumber
\begin{pmatrix}
A 
\end{pmatrix} , \,
\begin{pmatrix}
A & B \\
B & A \\
\end{pmatrix} , \,
\begin{pmatrix}
A & B & C \\
C & A & B \\
B & C & A \\
\end{pmatrix} , \,
\begin{pmatrix}
A & B & C & D\\
D & A & B & C \\
C & D & A &B \\
B & C & D & A  \\
\end{pmatrix} \, \dots \, ,
\end{equation}
where the $A, B, C$ etc. are elements in $\R$ or $\C$, or more generally $m \times m$ complex matrices. It is well known how to relate the eigenvalues of an $n \times n$ circulant matrix to those of its blocks, or rather to those of certain complex linear combinations of its blocks. To this end, let $\omega$ denote $e^{2\pi i/n}$, so that $\omega^n = 1$. Let $\mathcal{A}$ furthermore be the $n \times n$ circulant matrix given by
\begin{equation}\label{circform}
\mathcal{A} = \begin{pmatrix*}[l]
A_n & A_{n-1} & \dots & A_{2} & A_1\\
A_1 & A_n & \dots & A_{3} & A_{2}\\
 &  \ddots &\ddots &\ddots & \\
A_{n-2} & A_{n-3} & \dots & A_{n} & A_{n-1}\\
A_{n-1} & A_{n-2} & \dots & A_{1} & A_{n}\\
\end{pmatrix*}  \, ,
\end{equation}
with $A_1, \dots, A_n$ complex $m \times m$ matrices. We will interpret the index $k$ of $A_k$ as an element in $\Z/n\Z$, so that we may write $A_{i,j} = A_{i-j}$. Lastly, for $k \in \{1, \dots, n\}$ we define a linear map $\Lambda^k$ from the space of $n \times n$ circulant matrices with $m \times m$ blocks to the space of $m \times m$ matrices. Denoting a circulant matrix $\mathcal{A}$ as in \eqref{circform}, $\Lambda^k$ is given by
\begin{align}\label{lambdcirc}
\Lambda^k(\mathcal{A}) = \sum_{j=1}^n \omega^{jk}A_j\,.
\end{align}
Again, the index $k$ in $\Lambda^k$ may safely be taken from $\Z/n\Z$, as does the summation index $j$ in \eqref{lambdcirc}. Using this notation, we wish to generalise the following result to linear network maps.
\begin{prop}\label{main0}
A complex number $\la \in \C$ is an eigenvalue of the $n \times n$ circulant matrix $\mathcal{A}$, if and only if it is an eigenvalue of one of the $m \times m$ matrices $\Lambda^k(\mathcal{A})$.
\end{prop}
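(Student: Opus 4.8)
\emph{Proof strategy.} The plan is to block-diagonalise $\mathcal{A}$ by passing to the discrete Fourier basis, mimicking the classical scalar argument while carrying the $m\times m$ blocks along. Index the rows and columns of $\mathcal{A}$, as well as the coordinates of the ambient space $(\C^m)^n$, by $\Z/n\Z$, so that $\mathcal{A}=(A_{i-j})_{i,j}$. For each $k\in\Z/n\Z$ introduce the $m$-dimensional subspace
\[
V_k=\bigl\{\,(\omega^{jk}v)_{j\in\Z/n\Z}\ :\ v\in\C^m\,\bigr\}\subseteq(\C^m)^n .
\]

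First I would verify that each $V_k$ is $\mathcal{A}$-invariant and compute the induced action. A one-line substitution $l=i-j$ gives, for $w=(\omega^{jk}v)_j$,
\[
(\mathcal{A}w)_i=\sum_{j}A_{i-j}\,\omega^{jk}v=\omega^{ik}\sum_{l}\omega^{-lk}A_l\,v=\omega^{ik}\,\bigl(\Lambda^{-k}(\mathcal{A})v\bigr),
\]
so that $\mathcal{A}w\in V_k$ and, under the identification $V_k\cong\C^m$ sending $(\omega^{jk}v)_j$ to $v$, the restriction of $\mathcal{A}$ to $V_k$ is exactly the $m\times m$ matrix $\Lambda^{-k}(\mathcal{A})$. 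Next I would note that $(\C^m)^n=\bigoplus_{k\in\Z/n\Z}V_k$, which is simply the invertibility of the Vandermonde (Fourier) matrix $(\omega^{jk})_{j,k}$ — the $\omega^k$ are $n$ distinct numbers because $\omega$ is a primitive $n$-th root of unity — tensored with $\C^m$. Combining these two facts, $\mathcal{A}$ is similar to the block-diagonal matrix $\bigoplus_{k\in\Z/n\Z}\Lambda^{-k}(\mathcal{A})$; since $k\mapsto -k$ permutes $\Z/n\Z$, this is the same, up to reordering blocks, as $\bigoplus_{k=1}^{n}\Lambda^{k}(\mathcal{A})$. The spectrum of a block-diagonal matrix being the union of the spectra of its diagonal blocks, the proposition follows, and in fact with multiplicities.

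I do not expect a genuine obstacle: this is the classical circulant diagonalisation, and the only thing requiring care is the bookkeeping — in particular the sign of the exponent, i.e.\ whether $V_k$ carries $\Lambda^{k}(\mathcal{A})$ or $\Lambda^{-k}(\mathcal{A})$ — which is ultimately irrelevant because the whole family $\{\Lambda^{k}(\mathcal{A})\}_{k=1}^{n}$ is invariant under $k\mapsto -k$. It is worth recording the representation-theoretic reading of the argument, since it is this viewpoint that will generalise to arbitrary homogeneous networks in the body of the paper: a block matrix is circulant precisely when it commutes with the cyclic shift on $(\C^m)^n$, hence is an endomorphism of the $\Z/n\Z$-representation $\C[\Z/n\Z]\otimes\C^m$; decomposing the regular representation $\C[\Z/n\Z]$ into its one-dimensional subrepresentations with characters $j\mapsto\omega^{jk}$ and applying Schur's lemma forces $\mathcal{A}$ to respect the induced splitting, and the computation above identifies the resulting $m\times m$ blocks as the $\Lambda^{\pm k}(\mathcal{A})$.
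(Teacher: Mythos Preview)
Your argument is correct and is precisely the classical block-Fourier diagonalisation: the subspaces $V_k$ are $\mathcal{A}$-invariant, the restriction is $\Lambda^{-k}(\mathcal{A})$, and the DFT/Vandermonde matrix gives the direct sum $(\C^m)^n=\bigoplus_k V_k$; the sign flip $k\mapsto -k$ is handled correctly. The paper itself does not supply a proof of this proposition but cites \cite{davis1994circulant} and \cite{circulant}, where this same argument (or an equivalent one) appears; your write-up is exactly what one would expect there, and your representation-theoretic gloss at the end is a good bridge to the general machinery developed later in the paper.
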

\noindent Proposition \ref{main0} is a straightforward generalisation of results in \cite{davis1994circulant} and \cite{circulant}. As an example, we have that the eigenvalues of a matrix of the form
\begin{equation} \nonumber
\begin{pmatrix}
A & B \\
B & A \\
\end{pmatrix} 
\end{equation}
are given by those of $A+B$ together with those of $A-B$. Likewise, the eigenvalues of
\begin{equation}
\begin{pmatrix}
A & B & C \\
C & A & B \\
B & C & A \\
\end{pmatrix} , 
\end{equation}
are given by those of $A+B+C$, $A+e^{2\pi i/3}B + e^{4\pi i/3}C$ and $A + e^{4\pi i/3}B + e^{2\pi i/3}C$. Note that Proposition \ref{main0} just gives the eigenvalues of $\mathcal{A}$ in the case of $m = 1$. However, for $m > 1$ this result still yields a significant reduction in the computational cost of calculating or estimating the eigenvalues of $\mathcal{A}$, or of deducing certain properties of them. Moreover, as the linear maps $\Lambda^k$ do not depend on $\mathcal{A}$ themselves (but rather have $\mathcal{A}$ as their input), the result of Proposition \ref{main0} can be applied to families of circulant matrices as well. Likewise, the size $m$ of the blocks of $\mathcal{A}$ plays no essential role, as it does not appear in the formal expression \eqref{lambdcirc} for $\Lambda^k$.

\subsection{Circulant Matrices as Network Maps}
Next, we would like to point out that circulant matrices may be interpreted as linear maps respecting a certain network structure. For example, consider the $4$-cell ring network given by the left side of Figure \ref{fig1}. Every node in this network may be interpreted as some quantity evolving in time, whereas the arrows depict how these quantities influence each other. More precisely, each of the four arrow types (the red ones, the black self-loops, the green double-headed ones and the blue dashed ones) represents a particular type of interaction. For this statement to make sense, we need that the states of the individual nodes are somehow comparable. Hence, to each node $p \in \{1, \dots, 4\}$ we associate a variable $X_p$ taking values in some same phase space $V$. We moreover pick one \textit{response function} $f: V^4 \rightarrow V$ such that every slot of $f$ denotes a single type of input (and so a single arrow colour). This defines an \textit{admissible map} or, given that $V$ is a linear space, an \textit{admissible vector field} $\gamma_f$ of the network on the left of Figure \ref{fig1}. It is given by
\begin{equation}\label{fullformcirc4}
\arraycolsep=1.4pt\def\arraystretch{1}
\gamma_f(X_1, \dots, X_4) = \begin{array}{llcllr}
f(X_1,\roo{X_2}, \gr{X_{3}}, \bl{X_4}) \\
f(X_2,\roo{X_3}, \gr{X_{4}}, \bl{X_1}) \\
f(X_3,\roo{X_4}, \gr{X_{1}}, \bl{X_2}) \\
f(X_4,\roo{X_1}, \gr{X_{2}}, \bl{X_3}) \\
\end{array}\, .
\end{equation}
Note, for example, that there is a red arrow from node $2$ to node $1$ in the left network of Figure \ref{fig1}, so that node $1$ receives input of the `red-type'  from node $2$. Therefore, equation \eqref{fullformcirc4} shows an $X_2$ (depicted in red) in the second slot of $f$ in the first component of $\gamma_f$. \\
Now let us assume that the response function $f$ is linear. This would for example be the case if we looked at the Jacobian of $\gamma_f$ around a point $X = (X_1, \dots, X_4) \in V^4$ satisfying $X_1 = \dots = X_4$. Writing 
\begin{equation}
f(X,Y, Z,W) = AX+\roo{B}Y+\gr{C}Z+\bl{D}W\, ,
\end{equation}
equation \eqref{fullformcirc4} becomes
\begin{equation}\label{fullformcirc4lin}
\arraycolsep=1.4pt\def\arraystretch{1}
\gamma_f(X_1, \dots, X_4) = \begin{bmatrix}
A & \roo{B} & \gr{C} & \bl{D}\\
\bl{D} & A & \roo{B} & \gr{C} \\
\gr{C} & \bl{D} & A &\roo{B} \\
\roo{B} & \gr{C} & \bl{D} & A  \\
\end{bmatrix} \begin{bmatrix}
X_1\\
X_2\\
X_3\\
X_4\\
\end{bmatrix}\, .
\end{equation}
Hence, we exactly find all $4 \times 4$ circulant matrices as the linear admissible maps of the left network of Figure \ref{fig1}. \\
\begin{figure}
\centering
\begin{tikzpicture}
	\node[circle,draw=black, fill=bloesh, fill opacity = 1, inner sep=1pt, minimum size=12pt] (n) at (3.95,-1.05) {n};
	\node[circle,draw=black, fill=bloesh, fill opacity = 1, inner sep=1pt, minimum size=12pt] (1) at (5,0) {1};
	\node[circle,draw=black, fill=bloesh, fill opacity = 1, inner sep=1pt, minimum size=12pt] (2) at (6.5,0) {2};
	\node[circle,draw=black, fill=bloesh, fill opacity = 1, inner sep=1pt, minimum size=12pt] (3) at (7.55,-1.05) {3};
	\node[circle,draw=black, fill=bloesh, fill opacity = 1, inner sep=0pt, minimum size=12pt, ] (ip) at (5,-3) {\tiny i+1};
	\node[circle,draw=black, fill=bloesh, fill opacity = 1, inner sep=1pt, minimum size=12pt] (i) at (6.5,-3) {i};
	\node[] (t) at (7.13,-1.82) {.};
	\node[] (t1) at (7.02,-2.02) {.};
	\node[] (t2) at (6.91,-2.22) {.};
	\node[] (t3) at (4.37,-1.82) {.};
	\node[] (t4) at (4.48,-2.02) {.};
	\node[] (t5) at (4.59,-2.22) {.};
	\draw [->, >=stealth, thick, roo, shorten <=2pt, shorten >=2pt] (1) to [bend right = 20] (n);
	\draw [->,  >=stealth, thick, roo, shorten <=2pt, shorten >=2pt] (2) to [bend right = 20] (1);
	\draw [->,  >=stealth, thick, roo, shorten <=2pt, shorten >=2pt] (3) to [bend right = 20] (2);
	\draw [->,  >=stealth, thick, roo, shorten <=2pt, shorten >=2pt] (ip) to [bend right = 20] (i);
	\node[circle,draw=black, fill=bloesh, fill opacity = 1, inner sep=1pt, minimum size=12pt] (1v) at (0,-0.5) {1};
	\node[circle,draw=black, fill=bloesh, fill opacity = 1, inner sep=1pt, minimum size=12pt] (2v) at (2,-0.5) {2};
	\node[circle,draw=black, fill=bloesh, fill opacity = 1, inner sep=1pt, minimum size=12pt] (3v) at (2,-2.5) {3};
	\node[circle,draw=black, fill=bloesh, fill opacity = 1, inner sep=1pt, minimum size=12pt] (4v) at (0,-2.5) {4};
	\draw [->,  >=stealth, thick, roo, shorten <=2pt, shorten >=2pt] (2v) to [bend right = 30] (1v);
	\draw [->,  >=stealth, thick, roo, shorten <=2pt, shorten >=2pt] (3v) to [bend right = 30] (2v);
	\draw [->,  >=stealth, thick, roo, shorten <=2pt, shorten >=2pt] (4v) to [bend right = 30] (3v);
	\draw [->,  >=stealth, thick, roo, shorten <=2pt, shorten >=2pt] (1v) to [bend right = 30] (4v);
	\draw [->,  >=stealth, dashed, thick, blue, shorten <=2pt, shorten >=2pt] (1v) to [bend left = 10] (2v);
	\draw [->,  >=stealth, dashed, thick, blue, shorten <=2pt, shorten >=2pt] (2v) to [bend left = 10] (3v);
	\draw [->,  >=stealth, dashed, thick, blue, shorten <=2pt, shorten >=2pt] (3v) to [bend left = 10] (4v);
	\draw [->,  >=stealth, dashed, thick, blue, shorten <=2pt, shorten >=2pt] (4v) to [bend left = 10] (1v);
	\draw [->>,  >=stealth, thick, green, shorten <=2pt, shorten >=2pt] (3v) to [bend left = 20] (1v);
	\draw [->>,  >=stealth, thick, green, shorten <=2pt, shorten >=2pt] (1v) to [bend left = 20] (3v);
	\draw [->>,  >=stealth, thick, green, shorten <=2pt, shorten >=2pt] (2v) to [bend left = 20] (4v);
	\draw [->>,  >=stealth, thick, green, shorten <=2pt, shorten >=2pt] (4v) to [bend left = 20] (2v);
	\draw [->,  >=stealth, thick, black, loop above, shorten <=2pt, shorten >=2pt] (2v) to [out=25,in=65,looseness=10]( 2v);
	\draw [->,  >=stealth, thick, black, loop above, shorten <=2pt, shorten >=2pt] (1v) to [out=115,in=155,looseness=10]( 1v);
	\draw [->,  >=stealth, thick, black, loop above, shorten <=2pt, shorten >=2pt] (3v) to [out=295,in=335,looseness=10]( 3v);
	\draw [->,  >=stealth, thick, black, loop above, shorten <=2pt, shorten >=2pt] (4v) to [out=205,in=245,looseness=10]( 4v);
\end{tikzpicture}
\caption{Left: a ring coupled cell network with $4$ nodes and with all arrows shown. \\ Right: a ring coupled cell network with $n$ nodes. Shown here is only one type of arrow in red. The other types are obtained by following the given arrows two times, three times, and so forth, up to $n$ times (which corresponds to all self-loops). }
\label{fig1}
\end{figure}
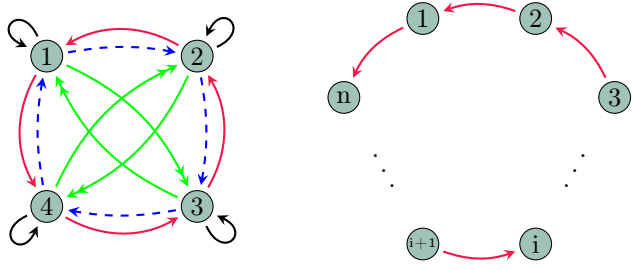
The right side of Figure \ref{fig1} shows a general ring coupled cell network. This graph consists of $n$ nodes with $n$ types of arrows, where there are furthermore $n$ arrows of each type. The first type consists of self-loops for every node, and is not shown on the right side of Figure \ref{fig1}. The second type is shown as the red arrows. The next type consists of all arrows one would obtain by following the red arrows forward twice, i.e. by concatenating two consecutive red arrows. The type after that is obtained by following the red arrows three times, and so forth. Note that following the red arrows $n$ times corresponds to all self-loops, and that following the red arrows $n+1$ times just gives back the red arrows. It can again be seen that the linear admissible maps for this network are exactly the $n \times n$ circulant matrices. \\
A natural question to ask is if we can generalise Proposition \ref{main0} from circulant matrices to linear admissible maps for any given network structure. The answer is affirmative for many examples of networks, and it holds true for all (finite, homogeneous) networks if we allow the $\Lambda_k$ to be matrices of several times the dimension of the phase space of a single cell (but still in general smaller than the total size of the network). \\
More precisely, we will prove Theorem \ref{main1} below. This result is on \textit{homogeneous networks with asymmetric input}. That is, networks in which every cell is targeted by exactly one arrow of every type. In addition, we require that the network is \textit{complete}. This means that for every ordered pair of arrow types $(c,d)$, there exists an arrow type $e$ such that the following holds: Tracing an arrow of type $c$ back from its target to its source and then following an arrow of type $d$ back to its source always amounts to directly following an arrow of type $e$ back. Moreover, we will always assume that a complete network has an arrow type consisting of all self loops. The networks of Figure \ref{fig1} are both examples of complete homogenous networks with asymmetric input (if one adds the arrow types not shown on the right). Every homogeneous network with asymmetric input can be made complete by adding more arrow types, see Section \ref{Homogeneous Networks and the Fundamental Network} for more details.

\begin{thr}\label{main1}
Let $\mathcal{N}$ be a complete homogeneous network with asymmetric input and with $t$ types of arrow. There exists a class of complete homogeneous networks with asymmetric input, denoted by $\mathfrak{C}_{\mathcal{N}}$, and formal linear maps
\begin{equation}
\Lambda^l_{i,j}(x_1, \dots, x_t) = \sum_{s=1}^t a_{i,j}^{l,s} \cdot x_s\, ,
\end{equation}
with $l \in \{1, \dots, k\}$ for some $k \geq 1$, $i,j \in \{1, \dots, n_l\}$ for some $n_l \geq 1$ and with $a_{i,j}^{l,s} \in \C$, such that the following holds:
\begin{enumerate}
\item $\mathfrak{C}_{\mathcal{N}}$ contains $\mathcal{N}$ and is closed under taking subnetworks, quotient networks and disjoint unions. Moreover, every network in $\mathfrak{C}_{\mathcal{N}}$ has the same arrow types as $\mathcal{N}$, so that a single response function can be used to describe admissible maps for all networks in $\mathfrak{C}_{\mathcal{N}}$ simultaneously. 
\item The linear maps $\Lambda^l_{i,j}$, for $l \in \{1, \dots, k\}$ and $i,j \in \{1, \dots, n_l\}$, are all linearly independent. In particular, we have $n_1^2 + n_2^2 + \dots + n_k^2 \leq t$.
\item $n_1 = 1$ and $\Lambda^1_{1,1}(x_1, \dots, x_t) = x_1 + x_2 + \dots + x_t$.
\item Let $\mathcal{M}$ be a network in $\mathfrak{C}_{\mathcal{N}}$. If every node corresponds to a variable in a finite dimensional linear phase space $V$, and we have a linear response function $f: V^t \rightarrow V$ given by
\begin{equation}
f(X_1, \dots, X_t) = \sum_{s=1}^t C_s X_s\, ,
\end{equation}
then the eigenvalues of the corresponding admissible linear map $\gamma^{\mathcal{M}}_f$ are given exactly by those of the $n^l \times n^l$ block matrices
\begin{equation}\label{repr1}
\Lambda^l(C_1, \dots, C_t) := \begin{pmatrix*}[l]
\ddots & & \reflectbox{$\ddots$} \\
 &  \Lambda^l_{i,j}(C_1, \dots, C_t) & \\
\reflectbox{$\ddots$}  & & \ddots \\
\end{pmatrix*}  \, 
\end{equation}
together, for all $l$ in some subset of $\{1, \dots, k\}$.\\
More precisely, for a linear map $X$ let $M_\la(X)$ denote the algebraic multiplicity of an eigenvalue $\la$. (where $M_\la(X):= 0$  if $\la$ is not an eigenvalue of $X$). There exist non-negative integers $m^{\mathcal{M}}_1 \dots, m^{\mathcal{M}}_k$, independent of $V$ or $f$, such that 
\begin{equation}
M_\la(\gamma^{\mathcal{M}}_f) = \sum_{l=1}^k m^{\mathcal{M}}_l M_\la(\Lambda^l(C_1, \dots, C_t))\, .
\end{equation}
In other words, the eigenvalues of $\gamma^{\mathcal{M}}_f$ are given by $m^{\mathcal{M}}_1$ times those of $\Lambda^1(C_1, \dots, C_t)$, together with $m^{\mathcal{M}}_2$ times those of $\Lambda^2(C_1, \dots, C_t)$ up to $m^{\mathcal{M}}_k$ times those of $\Lambda^k(C_1, \dots, C_t)$. These multiplicities $m^{\mathcal{M}}_l$ may be obtained by looking at the trace of $\gamma^{\mathcal{M}}_f$, using the fact that the maps $\Lambda^l_{i,j}$ are linearly independent. The linear maps $\Lambda^l_{i,j}$ may be obtained by investigating the representation theory of the so-called \textit{fundamental network}, which will be made more precise later on. However, in many cases the $ \Lambda^l$ can be obtained directly by investigating relatively small networks in $\mathfrak{C}_{\mathcal{N}}$.
\item For any network $\mathcal{M} \in \mathfrak{C}_{\mathcal{N}}$ with $M>0$ nodes we have
\begin{equation}\label{sumhuh1}
\sum_{l=1}^k m^{\mathcal{M}}_l n_l = M\, .
\end{equation}
and $m^{\mathcal{M}}_1 \geq 1$. Moreover, if $\mathcal{P}$ is a quotient network of $\mathcal{M}$, then we have $m^{\mathcal{P}}_l \leq m^{\mathcal{M}}_l$ for all $l \in \{1, \dots, k\}$.
\end{enumerate}
\end{thr}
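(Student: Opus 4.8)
The plan is to translate the problem into the representation theory of a single finite-dimensional algebra and then read everything off from the Wedderburn and Jordan--H{\"o}lder theorems. Since $\mathcal{N}$ is complete, its $t$ arrow types, composed as described in the introduction, form a monoid $\Sigma$ with the self-loop type as identity; let $\sigma_1, \dots, \sigma_t$ be its elements and $\C[\Sigma]$ its monoid algebra (of dimension $t$). I would take $\mathfrak{C}_{\mathcal{N}}$ to be the class of all complete homogeneous networks with asymmetric input whose monoid of arrow types is $\Sigma$. Statement 1 is then immediate: $\mathcal{N} \in \mathfrak{C}_{\mathcal{N}}$ by construction, and passing to a subnetwork (an input-closed set of nodes), to a quotient by a balanced equivalence, or to a disjoint union does not alter the monoid of arrow types, so $\mathfrak{C}_{\mathcal{N}}$ is closed under these operations and all its members share the arrow types — hence the response function — of $\mathcal{N}$. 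The structural input I would invoke, supplied by the fundamental-network and graph-fibration machinery of the later sections, is the following: for $\mathcal{M} \in \mathfrak{C}_{\mathcal{N}}$ with $M$ nodes and node space $V$, the phase space $V^M$ is canonically $W_{\mathcal{M}} \otimes V$, where $W_{\mathcal{M}} = \C^M$ carries a $\C[\Sigma]$-module structure built only from the input maps of $\mathcal{M}$ (and so independent of $V$), a linear admissible map $\gamma^{\mathcal{M}}_f$ with $f(X_1, \dots, X_t) = \sum_s C_s X_s$ is exactly the operator by which $\sum_{s=1}^t \sigma_s \otimes C_s \in \C[\Sigma] \otimes \End(V)$ acts on $W_{\mathcal{M}} \otimes V$, and a quotient network $\mathcal{P}$ of $\mathcal{M}$ corresponds, via its synchrony subspace, to a $\C[\Sigma]$-submodule $W_{\mathcal{P}} \subseteq W_{\mathcal{M}}$.

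Granting this, let $J$ be the Jacobson radical of $\C[\Sigma]$, so that Wedderburn gives $\C[\Sigma]/J \cong \bigoplus_{l=1}^{k} \mat_{n_l}(\C)$; write $\rho_l : \C[\Sigma] \to \mat_{n_l}(\C)$ for the corresponding irreducible representations and define $\Lambda^l_{i,j}(x_1, \dots, x_t) := \sum_{s=1}^{t} \rho_l(\sigma_s)_{i,j}\, x_s$, i.e. $a^{l,s}_{i,j} := \rho_l(\sigma_s)_{i,j}$. Then the block matrix $\Lambda^l(C_1, \dots, C_t)$ of \eqref{repr1} is precisely $\sum_{s} \rho_l(\sigma_s) \otimes C_s$, that is, the operator induced by $\sum_s \sigma_s \otimes C_s$ on $S_l \otimes V$ for the simple $\C[\Sigma]$-module $S_l$. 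For a fixed $\mathcal{M}$, choose a composition series of $W_{\mathcal{M}}$ and let $m^{\mathcal{M}}_l$ be the number of its factors isomorphic to $S_l$; these are well defined by Jordan--H{\"o}lder and depend only on $W_{\mathcal{M}}$, hence on $\mathcal{M}$ alone. Tensoring the composition series with $V$ yields a $\gamma^{\mathcal{M}}_f$-invariant filtration of $V^M$ whose successive quotients are copies of the $S_l \otimes V$ carrying the action $\Lambda^l(C_1, \dots, C_t)$. Multiplicativity of the characteristic polynomial along a filtration then gives $\mathrm{charpoly}(\gamma^{\mathcal{M}}_f) = \prod_{l=1}^{k} \mathrm{charpoly}(\Lambda^l(C_1, \dots, C_t))^{m^{\mathcal{M}}_l}$, which is exactly statement 4; the eigenvalues come from the $l$ with $m^{\mathcal{M}}_l > 0$, and since only eigenvalues (not Jordan type) are asserted, the failure of $\C[\Sigma]$ to be semisimple is irrelevant.

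The remaining items are short verifications. For statement 3, the map $\Sigma \to \C$, $\sigma \mapsto 1$ is a one-dimensional representation, which I take to be $\rho_1$; then $n_1 = 1$ and $\Lambda^1_{1,1}(x_1, \dots, x_t) = x_1 + \dots + x_t$, and the fully synchronous line $\C \cdot (1, \dots, 1) \subseteq W_{\mathcal{M}}$ is a submodule isomorphic to $S_1$, so $m^{\mathcal{M}}_1 \geq 1$ whenever $M > 0$. Statement 2: the functionals $\Lambda^l_{i,j} \in (\C^t)^*$ are the rows of the matrix whose columns are $\bar\rho(\sigma_s) \in \bigoplus_l \mat_{n_l}(\C)$, where $\bar\rho = \bigoplus_l \rho_l$; since the $\sigma_s$ span $\C[\Sigma]$ and $\bar\rho$ is surjective, the column span is all of $\bigoplus_l \mat_{n_l}(\C)$, so the matrix has rank $\sum_l n_l^2$, which equals its number of rows — hence the $\Lambda^l_{i,j}$ are linearly independent, and being $\sum_l n_l^2$ elements of $(\C^t)^*$ forces $\sum_l n_l^2 \leq t$. (The same independence makes the $m^{\mathcal{M}}_l$ computable from a trace, since $\tr(\gamma^{\mathcal{M}}_f) = \sum_s \big( \sum_l m^{\mathcal{M}}_l \tr \rho_l(\sigma_s) \big) \tr(C_s)$ and the character vectors $(\tr \rho_l(\sigma_s))_s$, $l = 1, \dots, k$, are linearly independent.) Statement 5: $\sum_l m^{\mathcal{M}}_l n_l = \dim W_{\mathcal{M}} = M$ by counting composition factors, $m^{\mathcal{M}}_1 \geq 1$ as above, and if $\mathcal{P}$ is a quotient of $\mathcal{M}$ then $W_{\mathcal{P}} \subseteq W_{\mathcal{M}}$ as $\C[\Sigma]$-modules, so its composition factors form a sub-multiset and $m^{\mathcal{P}}_l \leq m^{\mathcal{M}}_l$.

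I expect the genuine work — and the main obstacle — to lie entirely in the structural input invoked in the first paragraph: establishing, uniformly over all of $\mathfrak{C}_{\mathcal{N}}$ and independently of the node space $V$, that a network's phase space is a $\C[\Sigma]$-module on which admissible maps act through $\C[\Sigma] \otimes \End(V)$, and that quotient networks correspond to $\C[\Sigma]$-submodules. This is precisely what the fundamental network is built to provide (it realizes the regular representation of $\C[\Sigma]$, in which all the simple modules $S_l$, and hence all the $\Lambda^l$, already appear as composition factors). Once that foundation is laid, Theorem \ref{main1} is a repackaging of Wedderburn's theorem and the Jordan--H{\"o}lder theorem, and the only subtlety — non-semisimplicity of $\C[\Sigma]$ — is harmless because the claims concern the spectra, not the conjugacy classes, of the maps $\gamma^{\mathcal{M}}_f$.
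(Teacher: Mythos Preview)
Your proposal is correct and takes a genuinely different route from the paper. The paper never puts a $\C[\Sigma]$-module structure on $\C^M$ for a general constructible $\mathcal{M}$; instead it decomposes the fundamental-network phase space $\C^{\Sigma}$ (a $\Sigma$-representation via the symmetries $A_\sigma$) into \emph{indecomposables} $\bigoplus_l W_l^{n_l}$ by Krull--Schmidt, defines $\Lambda^l_{i,j}(\Gamma_f)=\tr(P^l_i\Gamma_f P^l_j)/\dim W_l$, shows the nilpotent remainder $\mathcal{J}\subset\End(\C^{\Sigma})$ is a two-sided ideal, and deduces the eigenvalue statement via the trace-of-powers criterion $\tr A^n=\tr B^n$ for all $n$. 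A general $\mathcal{M}$ is then handled by covering it with input networks --- each realised as a robust synchrony subspace of $\C^{\Sigma}$ --- and combining their trace formulas through an inclusion--exclusion on the node set (Lemma~\ref{zmodultresult}). You instead observe that $\C^M$ is itself a $\C[\Sigma]$-module (a two-line consequence of Lemma~\ref{stillcloded1}, so the ``structural input'' you flag as the main obstacle is in fact easy) and run Jordan--H{\"o}lder on it; the spectrum drops out from multiplicativity of the characteristic polynomial along the resulting filtration. The two constructions agree because the multiplicity of the indecomposable projective $P_l$ in the regular module equals $\dim S_l$, and $\End(\C^{\Sigma})/\mathcal{J}$ is exactly $\C[\Sigma]^{\mathrm{op}}/J$. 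One small caution: $(B_\sigma x)_p=x_{\sigma(p)}$ gives $B_\sigma B_\tau=B_{\tau\sigma}$, so your module is a \emph{right} $\C[\Sigma]$-module and the $\rho_l$ must be taken accordingly --- invisible at the level of spectra, but worth tracking. Your route is shorter and more conceptual; what the paper's more hands-on decomposition buys is the explicit projection formula for the $\Lambda^l$ and a tighter link to the synchrony-space picture exploited in the examples.
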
 
\begin{defi}
The class of networks $\mathfrak{C}_{\mathcal{N}}$ of Theorem \ref{main1} will be called the class of \textit{constructible networks} of ${\mathcal{N}}$. This will be precisely defined in Section \ref{Input Networks and Constructible Networks}, where it is also shown that $\mathfrak{C}_{\mathcal{N}}$ contains the so-called \textit{fundamental network} of $\mathcal{N}$. See Section \ref{Homogeneous Networks and the Fundamental Network} or \cite{fibr,  CCN,  RinkSanders3} for more on the fundamental network. We call the formal $n_l \times n_l$ block matrices $\Lambda^l$ the \textit{network multipliers} of $\mathfrak{C}_{\mathcal{N}}$. $\hfill \triangle$
\end{defi}
\noindent Before we address the specifics, we illustrate Theorem \ref{main1} with an example.
\begin{ex}
Consider the complete homogeneous network with asymmetric input $\mathcal{N}$ depicted on the left of Figure \ref{fig2x}. Writing a response function $f: V^6 \rightarrow V$ as 
\begin{equation}
f(X_1, \dots, X_6) = AX_1 +  \roo{B}X_2 +  \gr{C}X_3  + \bl{D}X_4+ \ora{E}X_5  + \pa{F} X_6\, ,
\end{equation}
we get the admissible map
\begin{equation}\label{gammafex142we2r}
\arraycolsep=1.4pt\def\arraystretch{1}
{\gamma}^{\mathcal{N}}_f = \begin{bmatrix}
A +  \roo{B}&   \gr{C}  &\bl{D}+ \ora{E}  &\pa{F}  \\
\roo{B} & A+  \gr{C} &\ora{E} & \bl{D} + \pa{F}   \\
\roo{B} &  \gr{C}  & A + \ora{E}  &\bl{D} + \pa{F}   \\
\roo{B} &  \gr{C}  &\ora{E}  &  A + \bl{D}  +\pa{F} 
\end{bmatrix} \, .
\end{equation}
The right side of Figure \ref{fig2x} shows a quotient network of $\mathcal{N}$, obtained by identifying cells $1$ and $2$, and at the same time cells $3$ and $4$. We will call this quotient network $\mathcal{M}$, and its corresponding admissible maps are given by
\begin{equation}\label{gammafex142we2rr}
\arraycolsep=1.4pt\def\arraystretch{1}
{\gamma}^{\mathcal{M}}_f = \begin{bmatrix}
A +  \roo{B} +  \gr{C}  &\bl{D}+ \ora{E}  + \pa{F}  \\
\roo{B} +  \gr{C}  & A + \ora{E}  + \bl{D} + \pa{F}   
\end{bmatrix} \, .
\end{equation}
By Theorem \ref{main1}, point 1, $\mathcal{M}$ belongs to the class of constructible networks $\mathfrak{C}_{\mathcal{N}}$. Therefore, we can get information about the network multipliers of $\mathfrak{C}_{\mathcal{N}}$ by looking at ${\gamma}^{\mathcal{M}}_f$. More precisely, equation \eqref{sumhuh1} in point 5 tells us that
\begin{equation}\label{sumhuh12}
\sum_{l=1}^k m^{\mathcal{M}}_l n_l = 2\, .
\end{equation}
Moreover, by point 3 we have $n_1 = 1$ and point 5 tells us that $m^{\mathcal{M}}_1 \geq 1$. Hence, we conclude that either $m^{\mathcal{M}}_1 = 2$, or $m^{\mathcal{M}}_1 = 1$ with  $m^{\mathcal{M}}_2 = 1$ and $n_2 = 1$ (after numbering the network multipliers accordingly). We conclude by point 4 that either $\tr({\gamma}^{\mathcal{M}}_f) = 2\Lambda^1$ or $\tr({\gamma}^{\mathcal{M}}_f) = \Lambda^1 + \Lambda^2$. (Here we assume $V = \C$, or we may interpret $\tr(\bullet)$ as the sum of the diagonal blocks.) A direct calculation shows that
\begin{equation}
\tr({\gamma}^{\mathcal{M}}_f) - \Lambda^1 = (2A +  \roo{B} +  \gr{C}  + \ora{E}  + \bl{D} + \pa{F} ) - (A +  \roo{B} +  \gr{C}  + \ora{E}  + \bl{D} + \pa{F}) = A\,.
\end{equation}
Therefore, we find $\Lambda^2(A, \dots,  \pa{F} )= A$. In particular, we have already found two network multipliers: $\Lambda^1(A, \dots,  \pa{F} ) = A +  \roo{B} +  \gr{C}  + \ora{E}  + \bl{D} + \pa{F}$ and $\Lambda^2(A, \dots,  \pa{F} )= A$. If we now look at our original network $\mathcal{N}$, we see that
\begin{align}\label{sumhuh3}
\tr({\gamma}^{\mathcal{N}}_f) &= 4A +  \roo{B} +  \gr{C}  + \ora{E}  + \bl{D} + \pa{F} = (A +  \roo{B} +  \gr{C}  + \ora{E}  + \bl{D} + \pa{F} ) + 3A\\
 &= \Lambda^1(A, \dots,  \pa{F} ) + 3\Lambda^2(A, \dots,  \pa{F} ) \nonumber \,.
\end{align}
Because the coefficients of all the network multipliers are linearly independent by point 2, we conclude that equation \eqref{sumhuh3} is the unique expression of $\tr({\gamma}^{\mathcal{N}}_f)$ as the sum of (the traces of) the network multipliers of $\mathfrak{C}_{\mathcal{N}}$. We conclude that $m^{\mathcal{N}}_1 = 1$ and $m^{\mathcal{N}}_2 = 3$. In particular, for any choice of matrices $A$ to $\pa{F}$ of any size $m$, the eigenvalues of the $4m \times 4m$ matrix \eqref{gammafex142we2r} are given by one time those of $A +  \roo{B} +  \gr{C}  + \ora{E}  + \bl{D} + \pa{F}$ together with three times those of $A$ (counting algebraic multiplicity).
$\hfill \triangle$
\end{ex}
\begin{figure}
\centering
\begin{tikzpicture}
	\node[circle,draw=black, fill=bloesh, fill opacity = 1, inner sep=1pt, minimum size=12pt] (1) at (7,-0.75) {1};
	\node[circle,draw=black, fill=bloesh, fill opacity = 1, inner sep=1pt, minimum size=12pt] (2) at (7,-3.75) {2};
	\draw [->,  >=stealth, thick, roo, loop above, shorten <=2pt, shorten >=2pt] (1) to [out=0,in=45,looseness=10](1);
	\draw [->,  >=stealth, thick, roo, shorten <=2pt, shorten >=2pt] (1) to [bend left = 20] (2);
	\draw [->,  >=stealth, dashed, thick, loop above, blue, shorten <=2pt, shorten >=2pt] (2) to  [out=-15,in=25,looseness=10](2);
	\draw [->,  >=stealth, dashed, thick, blue, shorten <=2pt, shorten >=2pt] (2) to [bend right = 0] (1);
	\draw [->>,  >=stealth, thick, green, shorten <=2pt, shorten >=2pt] (1) to [bend left = -20] (2);
	\draw [->>,  >=stealth, thick, green, loop above, shorten  <=2pt, shorten >=2pt] (1) to  [out=70, in=110,looseness=10](1);
	\draw [->,  >=stealth, thick, black, loop above, shorten <=2pt, shorten >=2pt] (2) to [out=145,in=185,looseness=10]( 2);
	\draw [->,  >=stealth, thick, black, loop above, shorten <=2pt, shorten >=2pt] (1) to [out=135,in=175,looseness=10]( 1);
	\draw [->>,  >=stealth, dashed, thick, loop above, orange, shorten <=2pt, shorten >=2pt] (2) to  [out=275,in=315,looseness=10](2);
	\draw [->>,  >=stealth, dashed, thick, orange, shorten <=2pt, shorten >=2pt] (2) to [bend right = -40] (1);
	\draw [->,  >=stealth, dash dot, thick, loop above, purple, shorten <=2pt, shorten >=2pt] (2) to  [out=215,in=255,looseness=10](2);
	\draw [->,  >=stealth, dash dot, thick, purple, shorten <=2pt, shorten >=2pt] (2) to [bend right = 40] (1);
	\node[circle,draw=black, fill=bloesh, fill opacity = 1, inner sep=1pt, minimum size=12pt] (1v) at (0,-0.75) {1};
	\node[circle,draw=black, fill=bloesh, fill opacity = 1, inner sep=1pt, minimum size=12pt] (2v) at (3,-0.75) {2};
	\node[circle,draw=black, fill=bloesh, fill opacity = 1, inner sep=1pt, minimum size=12pt] (3v) at (3,-3.75) {3};
	\node[circle,draw=black, fill=bloesh, fill opacity = 1, inner sep=1pt, minimum size=12pt] (4v) at (0,-3.75) {4};
	\draw [->,  >=stealth, thick, roo, loop above, shorten <=2pt, shorten >=2pt] (1v) to [out=55,in=95,looseness=10]( 1v);
	\draw [->,  >=stealth, thick, roo, shorten <=2pt, shorten >=2pt] (1v) to [bend left = 20] (2v);
	\draw [->,  >=stealth, thick, roo, shorten <=2pt, shorten >=2pt] (1v) to [bend right = 30] (3v);
	\draw [->,  >=stealth, thick, roo, shorten <=2pt, shorten >=2pt] (1v) to [bend right = 30] (4v);
	\draw [->,  >=stealth, dashed, thick, blue, shorten <=2pt, shorten >=2pt] (4v) to [bend left = 20] (2v);
	\draw [->,  >=stealth, dashed, thick, blue, shorten <=2pt, shorten >=2pt] (4v) to [bend right = 10] (3v);
	\draw [->,  >=stealth, dashed, thick, blue, shorten <=2pt, loop above, shorten >=2pt] (4v) to [out=137,in=177,looseness=10] (4v);
	\draw [->,  >=stealth, dashed, thick, blue, shorten <=2pt, shorten >=2pt] (3v) to [bend right = 10] (1v);
	\draw [->>,  >=stealth, thick, green, shorten <=2pt, shorten >=2pt] (2v) to [bend left = 20] (3v);
	\draw [->>,  >=stealth, thick, green, shorten <=2pt, shorten >=2pt] (2v) to [bend left = 10] (1v);
	\draw [->>,  >=stealth, thick, green, shorten <=2pt, loop above, shorten >=2pt] (2v) to  [out=-35,in=5,looseness=10] (2v);
	\draw [->>,  >=stealth, thick, green, shorten <=2pt, shorten >=2pt] (2v) to [bend left = 20] (4v);
	\draw [->,  >=stealth, thick, black, loop above, shorten <=2pt, shorten >=2pt] (2v) to [out=25,in=65,looseness=10]( 2v);
	\draw [->,  >=stealth, thick, black, loop above, shorten <=2pt, shorten >=2pt] (1v) to [out=115,in=155,looseness=10]( 1v);
	\draw [->,  >=stealth, thick, black, loop above, shorten <=2pt, shorten >=2pt] (3v) to [out=295,in=335,looseness=10]( 3v);
	\draw [->,  >=stealth, thick, black, loop above, shorten <=2pt, shorten >=2pt] (4v) to [out=200,in=240,looseness=10]( 4v);
	\draw [->>,  >=stealth, dashed, thick, orange, shorten <=2pt, shorten >=2pt] (3v) to [bend left = 0] (2v);
	\draw [->>,  >=stealth, dashed, thick, orange, shorten <=2pt, shorten >=2pt] (3v) to [bend left =10] (1v);
	\draw [->>,  >=stealth, dashed, thick, orange, shorten <=2pt, loop above, shorten >=2pt] (3v) to  [out= -5,in=35,looseness=10] (3v);
	\draw [->>,  >=stealth, dashed, thick, orange, shorten <=2pt, shorten >=2pt] (3v) to [bend left = 30] (4v);
	\draw [->,  >=stealth, dash dot, thick, purple, shorten <=2pt, shorten >=2pt] (4v) to [bend left = 0] (2v);
	\draw [->,  >=stealth, dash dot, thick, purple, shorten <=2pt, shorten >=2pt] (4v) to [bend left =10] (1v);
	\draw [->,  >=stealth, dash dot, thick, purple, shorten <=2pt, loop above, shorten >=2pt] (4v) to  [out= -100,in=-65,looseness=10] (4v);
	\draw [->,  >=stealth, dash dot, thick, purple, shorten <=2pt, shorten >=2pt] (4v) to [bend right = 50] (3v);
		\draw [->,  >=stealth, decorate, thick, decoration={snake,amplitude= 0.4mm,segment length=4mm,post length=1mm}, purple]  (3.65,-2.5) to node[above] {\begin{tabular}{l} $1,2 \mapsto 1$ \\ $3,4 \mapsto 2$  \end{tabular}}  (6,-2.5);
\end{tikzpicture}
\caption{Left: a complete homogeneous network with asymmetric input. \\ Right: a quotient network of the network on the left, obtained by identifying cells $1$ and $2$, and cells $3$ and $4$. }
\label{fig2x}
\end{figure}


\section{Homogeneous Networks and the Fundamental Network} \label{Homogeneous Networks and the Fundamental Network}
In this section we briefly introduce the definition of a homogeneous coupled cell network with asymmetric input, as well as the fundamental network formalism introduced by Nijholt, Rink and Sanders \cite{fibr,  CCN,  RinkSanders3}. We begin by defining a general coupled cell network, largely following for example \cite{field, golstew, pivato}.
\begin{defi}\label{ccnsgen}
A \textit{coupled cell network} is a directed graph $A \stackrel{\mathclap{\small\mbox{${s,t}$}}}{\rightrightarrows} N$ with finite set of cells or nodes $N$ and finite set of arrows $A$. Moreover, there is an equivalence relation $\sim_N$ on $N$ (usually called \textit{colour}) and an equivalence relation $\sim_A$ on $A$ (usually called \textit{arrow type} or also \textit{colour}), such that the following compatibility conditions are satisfied.
\begin{enumerate}
\item If $a,b \in A$ are such that $a \sim_A b$, then we have $s(a) \sim_N s(b)$ and $t(a) \sim_N t(b)$. In other words, arrows of the same type have sources of the same colour and targets of the same colour.
\item if  $p,q \in N$ are such that $p \sim_N q$, then there exists a bijection $\psi_{p.q}$ from $\mathcal{I}_p := \{a \in A\, \mid t(a) = p\}$ to $\mathcal{I}_q := \{a \in A\, \mid t(a) = q\}$ satisfying $\psi_{p.q}(a) \sim_A a$ for all $a \in \mathcal{I}_p$. In other words, nodes of the same colour are targeted by the same number of arrows for every type.
\end{enumerate}
Note that we allow for self loops, as well as multiple arrows between nodes (even of the same arrow type). $\hfill \triangle$
\end{defi}
\noindent In this article, we will mostly focus on the case where every node is of the same colour and where every node is targeted by precisely one arrow of every type. 
\begin{defi}
A coupled cell network is called \textit{homogeneous} if every node has the same colour. We say that a network has \textit{asymmetric input} if every node receives input from at most one arrow of a given type. In other words, if for every node $p \in N$ we have $a,b \in \mathcal{I}_p := \{a \in A\, \mid t(a) = p\}, a \sim_A b \implies a = b$. $\hfill \triangle$
\end{defi}
\noindent As mentioned before, we will mostly focus on homogeneous coupled cell networks with asymmetric input. The reason for choosing homogeneous networks is mostly convenience; a lot of the same ideas may be applied to non-homogeneous networks, but this becomes notationally far heavier. Our results will also apply when we drop the asymmetric condition, as every such network may be interpreted as a network with asymmetric input (by `pretending' some arrows are not of the same type). Of course, there are in general multiple ways of doing this, and it is not yet clear which way would somehow be best. As of yet, it is unclear how to (canonically) generalise the fundamental network construction below (see Definition \ref{fundamet}) to networks with symmetric input.\\

\noindent Given a homogeneous network with asymmetric input, we may associate to every arrow type an \textit{input map} from the set of nodes $N$ to itself. This works as follows. Since the network has asymmetric input, every node is targeted by at most one arrow of a given type. Moreover, as the network is homogeneous, and by condition 2 of Definition \ref{ccnsgen}, every node is targeted by exactly one arrow of a given type. The input map we associate to a given arrow type is then obtained by following this unique arrow back to its source node. In other words, suppose we consider the arrows of type $d$, and suppose we obtain the input map $\sigma: N \rightarrow N$ in this manner. Then for a node $p$, $\sigma(p)$ equals $s(a)$ for $a$ the unique arrow of type $d$ such that $t(a) = p$. More intuitively, this means that node $p$ `feels' node $\sigma(p)$ through the interaction of type $d$. Following this formalism, we will henceforth denote a homogeneous coupled cell network with asymmetric input by $\mathcal{N} = (N, \mathcal{T})$. Here $N$ is a finite set of nodes, and $\mathcal{T}$ is a set of input maps from $N$ to itself. Note that none of the maps in $\mathcal{T}$ has to be invertible. There is a similar description for coupled cell networks with asymmetric input but with multiple colours of nodes. In that case, every element of $\mathcal{T}$ denotes a map from the set of all nodes of some colour to the set of nodes of some (possibly different) colour.\\

\noindent Given a homogeneous network with asymmetric input $\mathcal{N} =(N, \mathcal{T})$, we may define an \textit{admissable map} for this network. To this end, we fix a space $V$ as the phase space of every individual node. In principle, $V$ can be any set with any structure, but for our ends it will be a finite dimensional vector space. To every node $p \in N$ we then assign a variable $x_p \in V$, so that we get the total phase space
\begin{equation}\label{totfase}
V^N:= \bigoplus_{p \in N} V \, .
\end{equation}
We may also on occasion write $V^{\mathcal{N}}$ for $V^N$, when the set of nodes is not made explicit. Note that $V^N$ is equal to $V^{\#N}:= V \times V \times \dots \times V$ ($\#N$ times) as a vector space. However, we choose the notation of equation \eqref{totfase} so that we may unambiguously write $x = (x_p)_{p \in N}$ for an element $x \in V^N$. Likewise, we define the \textit{input space} of any node by
\begin{equation}\label{infase}
V^I:= \bigoplus_{\sigma \in \mathcal{T}} V \, .
\end{equation}
If we now fix a  \textit{response function} $f: V^{I} \rightarrow V$, then we get the admissible map $\gamma_f: V^N \rightarrow V^N$ given by
\begin{equation}\label{gammaf}
	\begin{array}{ccl}
		(\gamma_f(x))_1 &= &f(x_{\sigma_1(1)}, x_{\sigma_2(1)}, \dots, x_{\sigma_m(1)})  \\
		(\gamma_f(x))_2 &= &f(x_{\sigma_1(2)}, x_{\sigma_2(2)}, \dots, x_{\sigma_m(2)})   \\
		   &\vdots  & \\
		 (\gamma_f(x))_n &= &f(x_{\sigma_1(n)}, x_{\sigma_2(n)}, \dots, x_{\sigma_m(n)}) 
	\end{array} \, .
\end{equation}
Here we have set $N = \{1, \dots, n\}$ and $\mathcal{T} = \{\sigma_1, \dots, \sigma_m\}$ for convenience. We can find a shorter way of writing equation \eqref{gammaf} by defining linear maps $\pi_p: V^N \rightarrow V^I$. These are given by $(\pi_p(x))_{\sigma} = x_{\sigma(p)}$ for every $p \in N$, $x = (x_q)_{q \in N} \in V^N$ and $\sigma \in \mathcal{T}$. Equation \eqref{gammaf} then simply becomes
\begin{equation}
(\gamma_f)_p = f \circ \pi_p \, ,
\end{equation}
for all $p \in N$.\\
The response function $f$ can again be required to respect any structure on $V$, but is usually $C^\infty$ or $C^k$ for some $k > 0$ (when $V$ is a vector space, or more generally a manifold). In this article we will only focus on linear response functions though, in which case we may write
\begin{equation}
f(w) = \sum_{\sigma \in \mathcal{T}} C_{\sigma} w_{\sigma} \,
\end{equation}
for $w = (w_{\sigma})_{\sigma \in \mathcal{T}} \in V^I$ and with $C_{\sigma} : V \rightarrow V$ a linear map for every $\sigma \in \mathcal{T}$.

\begin{figure}
\centering
\begin{tikzpicture}
	\node[circle,draw=black, fill=bloesh, fill opacity = 1, inner sep=1pt, minimum size=12pt] (1) at (1.5,2.625) {1};
	\node[circle,draw=black, fill=bloesh, fill opacity = 1, inner sep=1pt, minimum size=12pt] (2) at (0,0) {2};
	\node[circle,draw=black, fill=bloesh, fill opacity = 1, inner sep=1pt, minimum size=12pt] (3) at (3,0) {3};
	\draw [->,  >=stealth, thick, black, loop above, shorten <=2pt, shorten >=2pt] (1) to [out=25,in=65,looseness=10]( 1);
	\draw [->,  >=stealth, thick, black, loop above, shorten <=2pt, shorten >=2pt] (2) to [out=115,in=155,looseness=10]( 2);
	\draw [->,  >=stealth, thick, black, loop above, shorten <=2pt, shorten >=2pt] (3) to [out=295,in=335,looseness=10]( 3);
	\draw [->,  >=stealth, thick, roo, loop above, shorten <=2pt, shorten >=2pt] (1) to [out=85,in=125,looseness=10]( 1);
	\draw [->,  >=stealth, thick, roo, shorten <=2pt, shorten >=2pt] (3) to [bend right = 20] (2);
	\draw [->,  >=stealth, thick, roo, shorten <=2pt, shorten >=2pt] (2) to [bend right = 5] (3);	
	\draw [->>,  >=stealth, thick, green, loop above, shorten <=2pt, shorten >=2pt] (1) to [out=145,in=185,looseness=10]( 1);
	\draw [->>,  >=stealth, thick, green, shorten <=2pt, shorten >=2pt] (1) to [bend right = 20] (2);
	\draw [->>,  >=stealth, thick, green, shorten <=2pt, shorten >=2pt] (1) to [bend left = 20] (3);
	\draw [->,  >=stealth, dashed, thick, blue, loop above, shorten <=2pt, shorten >=2pt] (3) to [out=355,in=395,looseness=10]( 3);
	\draw [->,  >=stealth, dashed, thick, blue, shorten <=2pt, shorten >=2pt] (3) to [bend left = 10] (1);
	\draw [->,  >=stealth, dashed, thick, blue, shorten <=2pt, shorten >=2pt] (3) to [bend left = 25] (2);	
\end{tikzpicture}
\caption{An example of a homogeneous coupled cell network with asymmetric input.}
\label{fig2}
\end{figure}
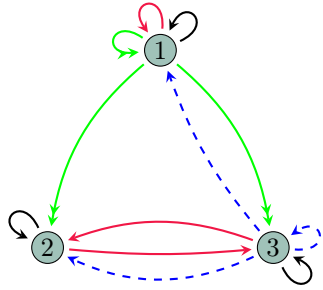

\begin{ex}\label{exam1}
Figure \ref{fig2} denotes a homogenous network with asymmetric input. Its nodes are given by $N = \{1, 2,3\}$ and its input maps are the identity on $N$ (corresponding to the black self-loops), the map $[1,3,2]$ (the red arrows), $[1,1,1]$ (the green double-headed ones) and $[3,3,3]$ (the blue dashed ones). Here and in the rest of the article we use the notation $[p_1, p_2, \dots, p_{n}]$ with $p_1, \dots, p_n \in N = \{1, \dots, n\}$ to denote the map from $N$ to itself that sends a node $i$ to a node $p_i$. A general admissible map for this network has the form
\begin{equation}\label{gammafex1}
	\begin{array}{cr}
		(\gamma_f(x))_1 &= f(x_{1}, \roo{x_1}, \gr{x_1}, \bl{x_3})  \\
		(\gamma_f(x))_2 &= f(x_{2}, \roo{x_3}, \gr{x_1}, \bl{x_3})  \\
		(\gamma_f(x))_3 &= f(x_{3}, \roo{x_2}, \gr{x_1}, \bl{x_3}) 
	\end{array} \, ,
\end{equation}
for $x = (x_1, x_2, x_3) \in V^3$ and a response function $f: V^4 \rightarrow V$. If $f$ is assumed linear, then we may write it as 
\begin{equation}
f(w) =Aw_1 + \roo{B}w_2 + \gr{C}w_3 + \bl{D}w_4\, ,
\end{equation}
for $w = (w_1, w_2, w_3, w_4) \in V^4$ and with $A, \roo{B}, \gr{C}$ and $\bl{D}$ linear maps from $V$ to $V$. In that case, equation \eqref{gammafex1} becomes
\begin{equation}\label{gammafex11}
\arraycolsep=1.4pt\def\arraystretch{1}
\gamma_f(x) = \begin{bmatrix}
A + \roo{B} + \gr{C}& 0 & \bl{D}\\
\gr{C} & A & \roo{B} + \bl{D} \\
\gr{C} &\roo{B} & A+ \bl{D} 
\end{bmatrix} \begin{bmatrix}
x_1\\
x_2\\
x_3\\
\end{bmatrix}\, .
\end{equation}
$\hfill \triangle$
\end{ex}
\noindent Next, we need the notion of an \textit{input network}. Simply put, the input network of a node $p$ in a network $\mathcal{N} =(N, \mathcal{T})$ consists of all nodes in $N$ that directly or indirectly influence $p$ (including $p$ itself). In other words, the nodes of the input network of $p$ are given by
\begin{equation}
N_p := \{q \in N \, \mid \, \exists~ \sigma_1, \dots, \sigma_k \in \mathcal{T} \text{ s.t. } q = \sigma_1 \circ \dots \circ \sigma_k(p) \} \cup \{p\}\, .
\end{equation}
The input maps are given by the restriction $\sigma|_{N_p}$ of $\sigma$ to $N_p$, for all $\sigma \in \mathcal{T}$. Note that the set $N_p$ is, by definition, mapped to itself by all elements of $\mathcal{T}$. We also point out that some of the maps $\sigma|_{N_p}$ might coincide for different $\sigma \in \mathcal{T}$. However, when this happens we will not identify them. The reason for this is that it allows us to naturally construct an admissible map  $\gamma_f^p$ for the input network $\mathcal{N}_p := (N_p, (\sigma|_{N_p})_{\sigma \in \mathcal{T}})$, given an admissible map $\gamma_f$ for $\mathcal{N}$. Formally, $\gamma_f^p$ is given by
\begin{equation}
(\gamma_f^p)_q := f \circ \pi_q^p
\end{equation}
for $q \in N_p$ and with $\pi_q^p: V^{N_p} \rightarrow V^I$ given by $(\pi_q^p(x))_{\sigma} = x_{\sigma|_{N_p}(q)} = x_{\sigma(q)}$. Here, $V^{N_p}$ denotes the phase space of the network $\mathcal{N}_p$, that is
\begin{equation}\label{totfase234}
V^{N_p}:= \bigoplus_{q \in N_p} V \, .
\end{equation}
Informally, one should just think of $\gamma_f^p$ as the expression \eqref{gammaf} for $\gamma_f$, but with the rows corresponding to nodes outside of $N_p$ `erased'. This intuition is formalised by the observation that there exists a linear surjective map $\psi_p: V^N \rightarrow V^{N_p}$ that conjugates $\gamma_f$ and $\gamma^p_f$. This map is given simply by $(\psi_p(x))_q = x_q$ for all $q \in N_p$. One verifies that indeed
\begin{equation}
\gamma^p_f \circ \psi_p = \psi_p \circ \gamma_f \, ,
\end{equation}
for all response functions $f$. One can likewise show that $\psi_p$ is indeed surjective, with kernel given by
\begin{equation}
\ker(\psi_p) = \{ v \in V^N \, \mid \, v_q = 0 \, \forall \, q \in N_p \subset N \} \, .
\end{equation}

\begin{ex}\label{exam2}
We revisit the network of Example \ref{exam1}. It may be seen from Figure \ref{fig2} that the input network of any node is equal to the whole network. Next, we consider the network that has the same nodes as that of Figure \ref{fig2}, but with all but the blue (dashed) arrows removed. In that case the input network of node $1$ consists of nodes $1$ and $3$, the input network of node $2$ contains nodes $2$ and $3$ and the input network of node $3$ contains only node $3$. $\hfill \triangle$
\end{ex}
\noindent As we will see below, the main reason for introducing input networks is the definition of the class of constructible networks $\mathfrak{C}_{\mathcal{N}}$. This class will also contain the so-called fundamental network. We will see that the input networks of the original network may be realised as quotients of the fundamental network. This observation is then generalised in the definition of $\mathfrak{C}_{\mathcal{N}}$. We proceed our analyses by showing how networks are related to certain algebraic structures.

\begin{defi}\label{monoidt1} A \textit{monoid} may be seen as a generalisation of a group, where one drops the condition that every element has to have an inverse. More precisely, a monoid is a set $\Sigma$, together with a unit $e \in \Sigma$ and a multiplication $\circ: \Sigma \times \Sigma \rightarrow \Sigma$. These have to satisfy $e \circ \sigma =  \sigma \circ e = \sigma$ for all $\sigma \in \Sigma$ (justifying the term unit) and $(\sigma \circ \tau) \circ \kappa = \sigma \circ (\tau \circ \kappa)$ for all $\sigma, \tau, \kappa \in \Sigma$ (associativity). Equivalently, one might think of a monoid as a semigroup with a unit. An example of a monoid is given by the set of all (not necessarily invertible) maps from a set of nodes $N$ to itself. Here the identity map is the unit, and multiplication is given by composition of maps. We will refer to this monoid as the full transformation monoid on $N$. In general, the set of input maps $\mathcal{T}$ of a homogeneous network $\mathcal{N}  = (N, \mathcal{T})$ may not form a monoid. However, we may always construct a monoid $\Sigma$ as the smallest sub-monoid of the full transformation monoid on $N$ that contains $\mathcal{T}$. Explicitly, $\Sigma$ is given by the identity map and all finite compositions of maps in $\mathcal{T}$. $\hfill \triangle$ 
\end{defi}
\noindent Using the construction of $\Sigma$ out of $\mathcal{T}$, we may now define a new network out of $\mathcal{N}$.

\begin{defi}\label{fundamet}
The \textit{fundamental network} of $\mathcal{N} = (N, \mathcal{T})$ is a homogeneous network with asymmetric input, with nodes indexed by the monoid $\Sigma$ and input maps indexed by $\mathcal{T}$. More precisely, the element $\tau \in \mathcal{T}$ defines a map from the set of nodes $\Sigma$ to itself, by mapping $\sigma \in \Sigma$ to $\tau \circ \sigma \in \Sigma$. With slight abuse of notation, we may therefore write $\mathcal{F} = (\Sigma, \mathcal{T})$ for the fundamental network of $\mathcal{N}$. $\hfill \triangle$
\end{defi}

\noindent As the input maps of a network $\mathcal{N} = (N, \mathcal{T})$ may be identified with those of its fundamental network $\mathcal{F} = (\Sigma, \mathcal{T})$, any response function $f: V^I \rightarrow V$ can be used to define an admissible map for both networks. More precisely, given a vector space $V$ we define the total phase space of the fundamental network by
\begin{equation}\label{infase2}
V^{\Sigma}:= \bigoplus_{\sigma \in \Sigma} V \, .
\end{equation}
An element of $V^{\Sigma}$ is then given by $(X_{\sigma})_{\sigma \in \Sigma}$, where $X_{\sigma} \in V$ denotes the state of the fundamental network node indexed by $\sigma \in \Sigma$. Given a response function $f: V^I \rightarrow V$ (with $V^I$ as in equation \eqref{infase}), we get an admissible network map for $\mathcal{F}$. We will denote this map by $\Gamma_f: V^{\Sigma} \rightarrow V^{\Sigma}$, to distinguish it from $\gamma_f$, and it is given by
\begin{equation}\label{gammaf2}
	\begin{array}{rcl}
		(\Gamma_f(X))_{\sigma_1} &= &f(X_{\sigma_1 \circ \sigma_1}, X_{\sigma_2 \circ \sigma_1}, \dots, X_{\sigma_m \circ \sigma_1})  \\
		(\Gamma_f(X))_{\sigma_2} &= &f(X_{\sigma_1 \circ \sigma_2}, X_{\sigma_2 \circ \sigma_2}, \dots, X_{\sigma_m \circ \sigma_2})  \\
		& \vdots & \\
		 (\Gamma_f(X))_{\sigma_M} &= &f(X_{\sigma_1 \circ \sigma_M}, X_{\sigma_2 \circ \sigma_M}, \dots, X_{\sigma_m \circ \sigma_M}) 
	\end{array} \, .
\end{equation}
Here we have written $\Sigma = \{\sigma_1, \dots, \sigma_M\}$ and $\mathcal{T} = \{\sigma_1, \dots, \sigma_m\}$. Note that an expression of the form $X_{\sigma_i \circ \sigma_j}$ for $\sigma_i \in \mathcal{T}$ and $\sigma_j \in \Sigma$ makes sense, as $\Sigma$ is closed under multiplication and we have $\mathcal{T} \subset \Sigma$.\\

\noindent The fundamental network is introduced in \cite{CCN} and its main purpose is twofold. First of all, every input network of $\mathcal{N} = (N, \mathcal{T})$ may be realised as a quotient of the fundamental network $\mathcal{F} = (\Sigma, \mathcal{T})$. This means that the map $\gamma_f^p$ may be obtained by restricting $\Gamma_f$ to some appropriate invariant space $S_p \subset V^{\Sigma}$. This will be the content of Lemma \ref{includedyeah}. Secondly, $\Gamma_f$ can be shown to have many symmetries. In fact, if we slightly adapt the networks $\mathcal{N}$ and $\mathcal{F}$, then the class of admissible maps $\Gamma_f$ can be seen as exactly the class of maps with certain symmetries. This will be explained in Lemma \ref{symmetryyeah}.  \\

\begin{defi}\label{synchr}
Given a network $\mathcal{N} = (N, \mathcal{T})$ and a node $p \in N$, we define the \textit{synchrony space} $S_p \subset V^{\Sigma}$ given by
\begin{equation}
S_p := \{ X \in V^{\Sigma}\,\mid \, X_{\sigma} = X_{\tau} \, \forall \, \sigma, \tau \in \Sigma \text{ s.t. } \sigma(p) = \tau(p)\} \, .
\end{equation}
One can show that this space is invariant for every $\mathcal{F}$-admissible map. That is, we have $\Gamma_f(S_p) \subset S_p$ for every response function $f: V^{I} \rightarrow V$. One says that the synchrony space $S_p$ is \textit{robust}.  \\
More generally, the term synchrony space may denote any subspace of $V^N$ or $V^{\Sigma}$ that is given by the equality of some of the components of a vector $(x_p)_{p \in N}$ or $(X_{\sigma})_{\sigma \in \Sigma}$. One also speaks of a polydiagonal space.  In particular, a general synchrony space of $V^N$ may be denoted by 
\begin{equation}\label{synch1s}
S_{\bowtie} := \{ x \in V^{\Sigma}\,\mid \, x_{p_1} = x_{p_2} \, \forall \, p_1, p_2 \in N \text{ s.t. } p_1 \bowtie p_2\} \, ,
\end{equation}
for some equivalence relation $\bowtie$ on the nodes $N$. Likewise, a general synchrony space of $V^{\Sigma}$ may be denoted by 
\begin{equation}\label{synch2s}
S_{\bowtie} := \{ X \in V^{\Sigma}\,\mid \, X_{\sigma} = X_{\tau} \, \forall \, \sigma, \tau \in \Sigma \text{ s.t. } \sigma \bowtie \tau\} \, ,
\end{equation}
for some equivalence relation $\bowtie$ on $\Sigma$. (Note that the fundamental network is again a network, but with nodes indexed by $\Sigma$. Hence, equation \eqref{synch2s} is just a special case of equation \eqref{synch1s}.) It can then be shown that $S_{\bowtie}$ is robust (i.e. satisfies $\gamma_f(S_{\bowtie}) \subset S_{\bowtie}$ for all $\gamma_f$, with $\gamma_f = \Gamma_f$ for the fundamental network) if and only if $\bowtie$ is a  so-called \textit{balanced} relation. What this means is that we have  $p_1 \bowtie p_2 \implies \sigma(p_1) \bowtie \sigma(p_2)$ for all $p_1, p_2 \in N$ and $\sigma \in \mathcal{T}$ (again with $N = \Sigma$ for the fundamental network). See for example \cite{curious} or \cite{CCN} for more on balanced relations and synchrony spaces. Note that for any node $p \in N$, the equivalence relation $\bowtie_p$ on $\Sigma$ given by $\sigma \bowtie_p \tau \Leftrightarrow \sigma(p) = \tau(p)$ is balanced, and that we have $S_p = S_{\bowtie_p}$.  One important consequence is that robustness of a synchrony space (defined by some equivalence relation $\bowtie$) does not depend on the space $V$, but only on $\bowtie$ itself. $\hfill \triangle$
\end{defi}

\noindent The following lemma relates input networks to the fundamental network.

\begin{lem}\label{includedyeah}
Given a network $\mathcal{N} = (N, \mathcal{T})$ and a node $p \in N$, denote by $\theta_p: V^{N_p} \rightarrow V^{\Sigma}$ the linear map given by $(\theta_p(x))_{\sigma} = x_{\sigma(p)}$ for $\sigma \in \Sigma$. Then $\theta_p$ is an injective map with image $S_p$. Moreover, we have
\begin{equation}
 \theta_p \circ \gamma_f^p = \Gamma_f \circ  \theta_p \, ,
\end{equation}
for every response function $f: V^I \rightarrow V$. As such, we may identify $\gamma^p_f$ with $\Gamma_f$ restricted to $S_p$.
\end{lem}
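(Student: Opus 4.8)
The plan is to prove all three assertions — that $\theta_p$ is linear and injective, that its image equals $S_p$, and that it conjugates $\gamma_f^p$ to $\Gamma_f$ — by direct unwinding of the defining formula $(\theta_p(x))_\sigma = x_{\sigma(p)}$, using only the $\Sigma$-invariance of $N_p$ and the associativity of composition in $\Sigma$. None of this is deep; it is bookkeeping, and the write-up should be kept short.

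First I would check that $\theta_p$ is well defined, i.e. that $\sigma(p) \in N_p$ for every $\sigma \in \Sigma$: by Definition \ref{monoidt1} each $\sigma \in \Sigma$ is the identity or a finite composition of maps in $\mathcal{T}$, and $N_p$ contains $p$ and is mapped into itself by every map in $\mathcal{T}$ by the definition of the input network, so $\sigma(p) \in N_p$. Linearity is then immediate, each component of $\theta_p(x)$ being a coordinate of $x$. For injectivity, suppose $\theta_p(x) = \theta_p(y)$ and fix $q \in N_p$; by definition of $N_p$ there is $\sigma \in \Sigma$ with $\sigma(p) = q$ (the identity if $q = p$), and then $x_q = x_{\sigma(p)} = (\theta_p(x))_\sigma = (\theta_p(y))_\sigma = y_q$, so $x = y$.

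Next I would identify the image with $S_p$. The inclusion $\operatorname{Im}(\theta_p) \subseteq S_p$ is immediate from the definition of $S_p$ in Definition \ref{synchr}: if $\sigma(p) = \tau(p)$ then $(\theta_p(x))_\sigma = x_{\sigma(p)} = x_{\tau(p)} = (\theta_p(x))_\tau$. Conversely, given $X \in S_p$ I would define $x \in V^{N_p}$ by $x_q := X_\sigma$ for any $\sigma \in \Sigma$ with $\sigma(p) = q$; such a $\sigma$ exists because $q \in N_p$, and the choice does not matter precisely because $X \in S_p$. Then $(\theta_p(x))_\sigma = x_{\sigma(p)} = X_\sigma$ for all $\sigma$, so $\theta_p(x) = X$; hence $\theta_p$ is a linear isomorphism from $V^{N_p}$ onto $S_p$.

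Finally, for the conjugacy $\theta_p \circ \gamma_f^p = \Gamma_f \circ \theta_p$ I would evaluate both sides at $x \in V^{N_p}$ in the component indexed by $\sigma \in \Sigma$. Writing $\mathcal{T} = \{\sigma_1, \dots, \sigma_m\}$, the left-hand side is $(\gamma_f^p(x))_{\sigma(p)} = f(x_{\sigma_1(\sigma(p))}, \dots, x_{\sigma_m(\sigma(p))})$ by the definitions of $\gamma_f^p$ and $\pi_q^p$, while the right-hand side is, by \eqref{gammaf2}, $f((\theta_p(x))_{\sigma_1 \circ \sigma}, \dots, (\theta_p(x))_{\sigma_m \circ \sigma}) = f(x_{(\sigma_1 \circ \sigma)(p)}, \dots, x_{(\sigma_m \circ \sigma)(p)})$, and these coincide since $(\sigma_i \circ \sigma)(p) = \sigma_i(\sigma(p))$. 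The last identification of $\gamma_f^p$ with $\Gamma_f|_{S_p}$ then follows by transporting along the isomorphism $\theta_p$. I do not expect a genuine obstacle; the only care needed is the two well-definedness checks (invariance of $N_p$ under $\Sigma$, and that the defining relations of $S_p$ make the inverse map well defined), and these are exactly what the relevant definitions supply.
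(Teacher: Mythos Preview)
Your proof is correct and is the standard direct verification. The paper itself does not supply a proof of this lemma but only refers the reader to \cite{cen}, so there is no paper-specific argument to compare against; your write-up, including the two well-definedness checks you flag, is exactly what is required.
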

\noindent A proof of Lemma \ref{includedyeah} can be found in for example \cite{cen}. In general, restricting an admissible network map to a robust synchrony space yields an admissible map for the corresponding \textit{quotient network}. That is, for the network whose nodes are the classes of the corresponding balanced partition, and where the arrows are induced by those of the original network. More precisely, the input maps are given by $\sigma([p]) = [\sigma(p)]$ for $\sigma \in \mathcal{T}$ and with $[p]$ the class of a node $p \in N$. This is well-defined by the definition of a balanced partition. Such a relation between networks may also be understood through so-called \textit{graph fibrations}, see \cite{deville} and \cite{fibr}. \\
Next, we introduce a network property that will allow us to describe admissible maps in terms of symmetry.

\begin{defi}
A network $\mathcal{N} = (N, \mathcal{T})$ is called \textit{complete} if $\mathcal{T}$ forms a monoid of maps, with the unit given by the identity function on $N$ and multiplication given by composition of maps. In other words, $\mathcal{N}$ is complete precisely when we have $\mathcal{T} = \Sigma$. In general, we call $\overline{\mathcal{N}} := (N, \Sigma)$ the \textit{completion} of $\mathcal{N} = (N, \mathcal{T})$.
\end{defi}
\begin{remk}\label{onesbigger231}
The class of admissible maps for a network $\mathcal{N} = (N, \mathcal{T})$ can always be seen as a subset of the class of admissible maps for its completion $\overline{\mathcal{N}} = (N, \Sigma)$. This follows from the fact that $\mathcal{T}$ is a subset of  $\Sigma$. More precisely, given a fixed space $V$, the input spaces of $\mathcal{N}$ and $\overline{\mathcal{N}}$ are given by
\begin{equation}
V_{\mathcal{N}}^I = \bigoplus_{\sigma \in \mathcal{T}} V \text{ and } V_{\overline{\mathcal{N}}}^I = \bigoplus_{\sigma \in \Sigma} V  \, ,
\end{equation}
respectively. Let us write $\mathcal{T} = \{\sigma_1, \dots, \sigma_m\}$ and $\Sigma = \{\sigma_1, \dots, \sigma_M\}$ for some $M \geq m$. Now suppose we are given an admissible map $\gamma_f^{\mathcal{N} }: V^N \rightarrow V^N$ for $\mathcal{N}$ corresponding to a response function $f: V_{\mathcal{N}}^I \rightarrow V$. Out of $f$ we may define the response function $g: V_{\overline{\mathcal{N}}}^I \rightarrow V$, given simply by $g(x_{\sigma_1}, \dots, x_{\sigma_M}) := f(x_{\sigma_1}, \dots, x_{\sigma_m})$ for all $(x_{\sigma_i})_{i=1}^M \in V_{\overline{\mathcal{N}}}^I$. It follows that $\gamma_f^{\mathcal{N} } = \gamma_g^{\overline{\mathcal{N}} }$, where the latter is an admissible map for $\overline{\mathcal{N}}$. $\hfill \triangle$
\end{remk}
\noindent Note that for any complete network $\mathcal{N}$, the input space $V^I$ is equal to the total phase space of the fundamental network $V^{\Sigma}$. Indeed, both are given by
\begin{equation}\label{infase32458}
V^I = V^{\Sigma} = \bigoplus_{\sigma \in \Sigma} V \, .
\end{equation}
It also follows from the definitions that a network is complete if and only if its fundamental network is complete. Finally, we will need the linear maps $A_{\sigma}: V^{\Sigma} \rightarrow V^{\Sigma}$ for $\sigma \in \Sigma$. These are given by $(A_{\sigma}(X))_{\tau} = X_{\tau \circ \sigma}$ for all $\tau \in \Sigma$. A key lemma in the proof of Theorem \ref{main1} is the following.

\begin{lem}\label{symmetryyeah}
Given a network $\mathcal{N} = (N, \mathcal{T})$ with fundamental network $\mathcal{F} = (\Sigma, \mathcal{T})$, the maps $(A_{\sigma})_{\sigma \in \Sigma}$ form a representation of the monoid $\Sigma$. That is, we have $A_{e} = \Id_{V^{\Sigma}}$ for $e \in \Sigma$ the identity map on $N$, and
\begin{equation}
A_{\sigma} \circ A_{\tau} = A_{\sigma \circ \tau} \, ,
\end{equation}
for all $\sigma, \tau \in \Sigma$. Moreover, every map $\Gamma_f$ is equivariant under this representation. In other words, we have
\begin{equation}
A_{\sigma} \circ \Gamma_f = \Gamma_f \circ A_{\sigma} \,  ,
\end{equation}
for every response function $f: V^I \rightarrow V$ and for all $\sigma \in \Sigma$. Lastly, if $\mathcal{N}$ (and hence $\mathcal{F}$) is complete, then the maps $\Gamma_f$ are exactly all maps with this symmetry. In other words, if $F: V^{\Sigma} \rightarrow V^{\Sigma}$ satisfies 
\begin{equation}
A_{\sigma} \circ F = F \circ A_{\sigma} \,  
\end{equation}
for all $\sigma \in \Sigma$, then we may write $F = \Gamma_f$ for some unique response function $f$. This function $f$ is furthermore linear if and only $F$ is, and may in fact be described by $f = F_e$ for $e$ the unit of $\Sigma$.
\end{lem}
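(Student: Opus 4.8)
The plan is to prove Lemma~\ref{symmetryyeah} in four stages, corresponding to the four assertions: (i) $(A_\sigma)_{\sigma\in\Sigma}$ is a monoid representation, (ii) every $\Gamma_f$ is equivariant, (iii) in the complete case every equivariant $F$ arises as some $\Gamma_f$, and (iv) the identification $f=F_e$ together with the linearity statement. Throughout I would work component-wise in $V^\Sigma=\bigoplus_{\sigma\in\Sigma}V$, using the defining formula $(A_\sigma(X))_\tau = X_{\tau\circ\sigma}$ and writing $\pi_\tau:V^\Sigma\to V$ for the projection onto the $\tau$-component.

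For (i), I would compute $(A_\sigma\circ A_\tau(X))_\kappa = (A_\tau(X))_{\kappa\circ\sigma} = X_{(\kappa\circ\sigma)\circ\tau} = X_{\kappa\circ(\sigma\circ\tau)} = (A_{\sigma\circ\tau}(X))_\kappa$, using associativity of composition in $\Sigma$; the identity $A_e=\Id$ is immediate since $\kappa\circ e=\kappa$. Note the order is consistent: $A$ is a genuine (left) action because the monoid acts on the right on the index set. For (ii), I would use the description $(\Gamma_f)_\tau = f\circ\pi^{\mathcal F}_\tau$ from \eqref{gammaf2}, where $(\pi^{\mathcal F}_\tau(X))_{\sigma_i} = X_{\sigma_i\circ\tau}$ for $\sigma_i\in\mathcal T$. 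The key observation is the intertwining relation $\pi^{\mathcal F}_\tau\circ A_\sigma = \pi^{\mathcal F}_{\tau\circ\sigma}$, which again is just $(\pi^{\mathcal F}_\tau(A_\sigma(X)))_{\sigma_i} = (A_\sigma(X))_{\sigma_i\circ\tau} = X_{\sigma_i\circ\tau\circ\sigma} = (\pi^{\mathcal F}_{\tau\circ\sigma}(X))_{\sigma_i}$. Then $(\Gamma_f(A_\sigma X))_\tau = f(\pi^{\mathcal F}_\tau(A_\sigma X)) = f(\pi^{\mathcal F}_{\tau\circ\sigma}(X)) = (\Gamma_f(X))_{\tau\circ\sigma} = (A_\sigma(\Gamma_f(X)))_\tau$, giving $\Gamma_f\circ A_\sigma = A_\sigma\circ\Gamma_f$.

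The substantive part is (iii), the converse in the complete case, and this is where I expect the real work. Given $F:V^\Sigma\to V^\Sigma$ with $A_\sigma\circ F=F\circ A_\sigma$ for all $\sigma\in\Sigma$, I would define $f:=F_e=\pi_e\circ F:V^\Sigma\to V$ and, since completeness gives $V^I=V^\Sigma$ (equation \eqref{infase32458}), interpret $f$ as a response function. I must then check $F=\Gamma_f$, i.e. $F_\tau = f\circ\pi^{\mathcal F}_\tau$ for every $\tau\in\Sigma$. Here I would compute $F_\tau(X) = \pi_\tau(F(X))$; using equivariance, $\pi_\tau\circ F = \pi_e\circ A_\tau\circ F = \pi_e\circ F\circ A_\tau = f\circ A_\tau$, because $(A_\tau(X))_e = X_{e\circ\tau}=X_\tau$ means $\pi_e\circ A_\tau = \pi_\tau$. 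So $F_\tau(X) = f(A_\tau(X))$, and it remains to identify $A_\tau(X)\in V^\Sigma=V^I$ with $\pi^{\mathcal F}_\tau(X)$: indeed $(A_\tau(X))_{\sigma_i} = X_{\sigma_i\circ\tau}$ for $\sigma_i\in\mathcal T\subseteq\Sigma$, which is exactly the $\sigma_i$-component of $\pi^{\mathcal F}_\tau(X)$ as in \eqref{gammaf2}. Thus $F_\tau = f\circ\pi^{\mathcal F}_\tau = (\Gamma_f)_\tau$ for all $\tau$, so $F=\Gamma_f$. Uniqueness of $f$ follows since $f=F_e$ is forced, and applying this to two response functions $f_1,f_2$ with $\Gamma_{f_1}=\Gamma_{f_2}$ gives $f_1=(\Gamma_{f_1})_e=(\Gamma_{f_2})_e=f_2$. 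Finally, linearity of $f=F_e=\pi_e\circ F$ is equivalent to linearity of $F$: one direction is clear since $\pi_e$ is linear and a composition of linear maps is linear; conversely, if $f$ is linear then $\Gamma_f$ is linear directly from \eqref{gammaf2}, and since $F=\Gamma_f$, $F$ is linear. The only point requiring care — and the main potential obstacle — is making sure the identification $V^I=V^\Sigma$ used to read $f$ as a response function is compatible on both sides with the indexing by $\mathcal T$ versus $\Sigma$; this is precisely where completeness ($\mathcal T=\Sigma$) is indispensable, since without it $\pi^{\mathcal F}_\tau(X)$ records only the $\mathcal T$-indexed slots while $f=F_e$ a priori needs all of $V^\Sigma$ as input, and the converse would fail.
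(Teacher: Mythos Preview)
Your proof is correct and complete. The paper itself does not supply a proof of this lemma; it simply states that the result is proven in \cite{cen}, and then records the consequence $(\Gamma_f(X))_\sigma = (f\circ A_\sigma)(X)$ in the complete case --- which is precisely the identification $\pi^{\mathcal F}_\tau = A_\tau$ (for $\mathcal T=\Sigma$) that you establish and use in part (iii). Your argument is the standard direct component-wise verification one would expect to find in the cited reference, and your closing remark correctly isolates completeness as the hypothesis needed to make $f=F_e$ a bona fide response function on $V^I$.
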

\noindent Lemma \ref{symmetryyeah} is proven in for example \cite{cen}. It can also be shown that when $\mathcal{N}$ (and hence $\mathcal{F}$) is complete, the admissible maps for the fundamental network may be described by: 
\begin{equation}
(\Gamma_f(X))_{\sigma} = (f \circ A_{\sigma})(X)\, ,
\end{equation}
for all $\sigma \in \Sigma$ and $X \in V^{\Sigma}$.

\begin{figure}
\centering
\begin{tikzpicture}
	\node[circle,draw=black, fill=bloesh, fill opacity = 1, inner sep=1pt, minimum size=12pt] (1) at (1.5,2.625) {1};
	\node[circle,draw=black, fill=bloesh, fill opacity = 1, inner sep=1pt, minimum size=12pt] (2) at (0,0) {2};
	\node[circle,draw=black, fill=bloesh, fill opacity = 1, inner sep=1pt, minimum size=12pt] (3) at (3,0) {3};
	\draw [->,  >=stealth, thick, black, loop above, shorten <=2pt, shorten >=2pt] (1) to [out=25,in=65,looseness=10]( 1);
	\draw [->,  >=stealth, thick, black, loop above, shorten <=2pt, shorten >=2pt] (2) to [out=115,in=155,looseness=10]( 2);
	\draw [->,  >=stealth, thick, black, loop above, shorten <=2pt, shorten >=2pt] (3) to [out=295,in=335,looseness=10]( 3);
	\draw [->,  >=stealth, thick, roo, loop above, shorten <=2pt, shorten >=2pt] (1) to [out=85,in=125,looseness=10]( 1);
	\draw [->,  >=stealth, thick, roo, shorten <=2pt, shorten >=2pt] (3) to [bend right = 20] (2);
	\draw [->,  >=stealth, thick, roo, shorten <=2pt, shorten >=2pt] (2) to [bend right = 5] (3);	
	\draw [->>,  >=stealth, thick, green, loop above, shorten <=2pt, shorten >=2pt] (1) to [out=145,in=185,looseness=10]( 1);
	\draw [->>,  >=stealth, thick, green, shorten <=2pt, shorten >=2pt] (1) to [bend right = 20] (2);
	\draw [->>,  >=stealth, thick, green, shorten <=2pt, shorten >=2pt] (1) to [bend left = 20] (3);
	\draw [->,  >=stealth, dashed, thick, blue, loop above, shorten <=2pt, shorten >=2pt] (3) to [out=355,in=395,looseness=10]( 3);
	\draw [->,  >=stealth, dashed, thick, blue, shorten <=2pt, shorten >=2pt] (3) to [bend left = 10] (1);
	\draw [->,  >=stealth, dashed, thick, blue, shorten <=2pt, shorten >=2pt] (3) to [bend left = 25] (2);	
	\draw [->>,  >=stealth, dashed, thick, orange, shorten <=2pt, shorten >=2pt] (2) to [bend right = 45] (3);
	\draw [->>,  >=stealth, dashed, thick, orange, shorten <=2pt, shorten >=2pt] (2) to [bend right = 15] (1);		
	\draw [->>,  >=stealth, dashed, thick, orange, loop above, shorten <=2pt, shorten >=2pt] (2) to [out=185,in=225,looseness=10]( 2);
	\node[circle,draw=black, fill=bloesh, fill opacity = 1, inner sep=1pt, minimum size=12pt] (f1) at (8,3.5) {1};
	\node[circle,draw=black, fill=bloesh, fill opacity = 1, inner sep=1pt, minimum size=12pt] (f2) at (6.1,2.11) {2};
	\node[circle,draw=black, fill=bloesh, fill opacity = 1, inner sep=1pt, minimum size=12pt] (f3) at (6.82, -0.11) {3};
	\node[circle,draw=black, fill=bloesh, fill opacity = 1, inner sep=1pt, minimum size=12pt] (f4) at (9.18, -0.11) {4};
	\node[circle,draw=black, fill=bloesh, fill opacity = 1, inner sep=1pt, minimum size=12pt] (f5) at (9.90, 2.11) {5};
	\draw [->,  >=stealth, thick, black, loop above, shorten <=2pt, shorten >=2pt] (f1) to [out=90,in=135,looseness=10]( f1);
	\draw [->,  >=stealth, thick, black, loop above, shorten <=2pt, shorten >=2pt] (f2) to [out=162,in=202,looseness=10]( f2);
	\draw [->,  >=stealth, thick, black, loop above, shorten <=2pt, shorten >=2pt] (f3) to [out=234,in=274,looseness=10]( f3);
	\draw [->,  >=stealth, thick, black, loop above, shorten <=2pt, shorten >=2pt] (f4) to [out=306,in=346,looseness=10]( f4);
	\draw [->,  >=stealth, thick, black, loop above, shorten <=2pt, shorten >=2pt] (f5) to [out=328,in=368,looseness=10]( f5);
	\draw [->,  >=stealth, thick, roo, shorten <=2pt, shorten >=2pt] (f1) to [bend right = 20] (f2);
	\draw [->,  >=stealth, thick, roo, shorten <=2pt, shorten >=2pt] (f2) to [bend right = 25] (f1);
	\draw [->,  >=stealth, thick, roo, loop above, shorten <=2pt, shorten >=2pt] (f3) to [out=294,in=334,looseness=10]( f3);
	\draw [->,  >=stealth, thick, roo, shorten <=2pt, shorten >=2pt] (f4) to [bend right = 25] (f5);
	\draw [->,  >=stealth, thick, roo, shorten <=2pt, shorten >=2pt] (f5) to [bend right = 20] (f4);
	\draw [->>,  >=stealth, thick, green, loop above, shorten <=2pt, shorten >=2pt] (f3) to [out=175,in=215,looseness=10]( f3);
	\draw [->>,  >=stealth, thick, green, shorten <=2pt, shorten >=2pt] (f3) to [bend right = 13] (f1);
	\draw [->>,  >=stealth, thick, green, shorten <=2pt, shorten >=2pt] (f3) to [bend left = 20] (f2);
	\draw [->>,  >=stealth, thick, green, shorten <=2pt, shorten >=2pt] (f3) to [bend left = 20] (f4);
	\draw [->>,  >=stealth, thick, green, shorten <=2pt, shorten >=2pt] (f3) to [bend left = 10] (f5);
	\draw [->,  >=stealth, dashed, thick, blue, loop above, shorten <=2pt, shorten >=2pt] (f4) to [out=246,in=286 ,looseness=10](f4);
	\draw [->,  >=stealth, dashed, thick, blue, shorten <=2pt, shorten >=2pt] (f4) to [bend left = 0] (f1);
	\draw [->,  >=stealth, dashed, thick, blue, shorten <=2pt, shorten >=2pt] (f4) to [bend left = 0] (f2);	
	\draw [->,  >=stealth, dashed, thick, blue, shorten <=2pt, shorten >=2pt] (f4) to [bend left = 0] (f3);	
	\draw [->,  >=stealth, dashed, thick, blue, shorten <=2pt, shorten >=2pt] (f4) to [bend left = 0] (f5);
	\draw [->>,  >=stealth, dashed, thick, orange, shorten <=2pt, shorten >=2pt] (f5) to [bend right = 20] (f1);	
	\draw [->>,  >=stealth, dashed, thick, orange, shorten <=2pt, shorten >=2pt] (f5) to [bend right = 30] (f2);		
	\draw [->>,  >=stealth, dashed, thick, orange, shorten <=2pt, shorten >=2pt] (f5) to [bend right = 50] (f3);	
	\draw [->>,  >=stealth, dashed, thick, orange, shorten <=2pt, shorten >=2pt] (f5) to [bend left = 50] (f4);
	\draw [->>,  >=stealth, dashed, thick, orange, loop above, shorten <=2pt, shorten >=2pt] (f5) to [out=408,in=448,looseness=10](f5);
	\draw [->,  >=stealth, decorate, thick, decoration={snake,amplitude= 0.4mm,segment length=4mm,post length=1mm}, purple]  (5.5,1.5) to node[above] {\begin{tabular}{l} $1 \mapsfrom 1,2,3$ \\ $3 \mapsfrom 4$ \\ $2 \mapsfrom 5$ \end{tabular}}  (3.5,1.5);
\end{tikzpicture}
\caption{Left: the network obtained by completing the network of Figure \ref{fig2}. \\ Right: the fundamental network of the network on the left. The original network on the left may be seen as a quotient of the fundamental network by sending the nodes $1$, $2$ and $3$ to $1$,  $4$ to $3$ and $5$ to $2$.}
\label{fig3}
\end{figure}
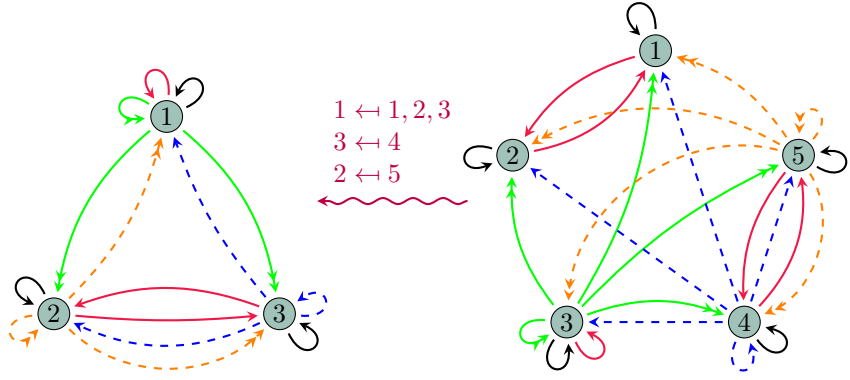

\begin{ex}\label{exam3}
We return to our running example of Figure \ref{fig2}. Here we have 
\begin{equation}
\mathcal{T} = \{[1,2,3], [1,3,2], [1,1,1], [3,3,3]\} \, .
\end{equation}
As it holds that $[1,3,2]\circ[3,3,3] = [2,2,2]$, the monoid $\Sigma$ will also contain this latter map. In fact, one can show that no other maps have to be added to make $\mathcal{T}$ into a monoid, and it follows that 
\begin{equation}
\Sigma= \{[1,2,3], [1,3,2], [1,1,1], [3,3,3], [2,2,2]\} \, .
\end{equation}
The left side of Figure \ref{fig3} shows the completion of the network of Figure \ref{fig2}. Note that only the orange (dashed, double headed) arrows are added, corresponding to the input map $[2,2,2]$. Moreover, the identity $[1,3,2]\circ[3,3,3] = [2,2,2]$ is reflected in the left network of Figure \ref{fig3} by the fact that following a blue arrow backwards and then a red arrow backwards amounts to following an orange arrow backwards. In the same way, concatenating two colours of arrows always yields an existing colour of arrow, which reflects the fact that $\Sigma$ is closed under multiplication.\\
An admissible map for the network on the left of Figure \ref{fig3} is given by
\begin{equation}\label{gammafex2}
	\gamma_f(x) = \begin{array}{cr}
		f(x_{1}, \roo{x_1}, \gr{x_1}, \bl{x_3}, \ora{x_2})  \\
		f(x_{2}, \roo{x_3}, \gr{x_1}, \bl{x_3}, \ora{x_2})  \\
		f(x_{3}, \roo{x_2}, \gr{x_1}, \bl{x_3}, \ora{x_2}) 
	\end{array} \, ,
\end{equation}
for $x = (x_1, x_2, x_3) \in V^3$ for some phase space $V$ and a response function $f: V^5 \rightarrow V$. If $f$ is linear, then we may write it as 
\begin{equation}\label{lini01}
f(w) =Aw_1 + \roo{B}w_2 + \gr{C}w_3 + \bl{D}w_4 + \ora{E}w_5\, ,
\end{equation}
for $w = (w_1, w_2, w_3, w_4, w_5) \in V^5$ and with $A, \roo{B}, \gr{C}, \bl{D}$ and  \ora{E} linear maps from $V$ to $V$. Equation \eqref{gammafex2} then becomes
\begin{equation}\label{gammafex12}
\arraycolsep=1.4pt\def\arraystretch{1}
\gamma_f(x) = \begin{bmatrix}
A + \roo{B} + \gr{C}&  \ora{E} & \bl{D}\\
\gr{C} & A+ \ora{E} & \roo{B} + \bl{D} \\
\gr{C} &\roo{B} +  \ora{E}& A+ \bl{D} 
\end{bmatrix} \begin{bmatrix}
x_1\\
x_2\\
x_3\\
\end{bmatrix}\, .
\end{equation}
The right side of Figure \ref{fig3} shows the fundamental network of the network on the left. The admissible maps for this network on the right are given by
\begin{equation}\label{gammafex3}
	\Gamma_f(X) = \begin{array}{cr}
		f(X_{1}, \roo{X_2}, \gr{X_3}, \bl{X_4}, \ora{X_5})  \\
		f(X_{2}, \roo{X_1}, \gr{X_3}, \bl{X_4}, \ora{X_5})  \\
		f(X_{3}, \roo{X_3}, \gr{X_3}, \bl{X_4}, \ora{X_5})  \\
		f(X_{4}, \roo{X_5}, \gr{X_3}, \bl{X_4}, \ora{X_5})  \\
		f(X_{5}, \roo{X_4}, \gr{X_3}, \bl{X_4}, \ora{X_5})  
	\end{array} \, ,
\end{equation}
for $X = (X_1, X_2, X_3, X_4, X_5) \in V^5$. Figure \ref{fig3} also shows that the original network may be seen as a quotient network of the fundamental network. (Recall that the network on the left is the input network of any of its nodes.) More precisely, the network on the left of Figure \ref{fig3} can be obtained from the one on the right by identifying the nodes $1$, $2$ and $3$ to a single node, and by renaming the nodes of the resulting three cell network. This is reflected in equations \eqref{gammafex2} and \eqref{gammafex3} by the fact that the synchrony space $\{X_1 = X_2 = X_3\} \subset V^5$ is fixed by any map of the form \eqref{gammafex3}, and that the restriction to this space yields equation \eqref{gammafex2} (after renaming the nodes). Moreover, it can be shown that equation \eqref{gammafex3} describes exactly all maps that commute with the linear maps
\begin{align}
&A_1: X \mapsto X  & \\ \nonumber 
&A_2: X \mapsto (X_2, X_1, X_3, X_4, X_5)  &A_3: X \mapsto (X_3, X_3, X_3, X_4, X_5) \\ \nonumber
&A_4: X \mapsto (X_4, X_5, X_3, X_4, X_5)  &A_5: X \mapsto (X_5, X_4, X_3, X_4, X_5) \nonumber
\end{align}
for $X =  (X_1, X_2, X_3, X_4, X_5) \in V^5$. If $f$ is linear and given by equation \eqref{lini01}, then equation \eqref{gammafex3} becomes
\begin{equation}\label{gammafex14}
\arraycolsep=1.4pt\def\arraystretch{1}
\Gamma_f(X) = \begin{bmatrix}
A & \roo{B} & \gr{C} &  \bl{D} & \ora{E} \\
 \roo{B} & A& \gr{C} &  \bl{D} & \ora{E} \\
0 & 0 & A + \roo{B} + \gr{C} &  \bl{D} & \ora{E} \\
0 & 0 & \gr{C} &  A + \bl{D} & \roo{B} + \ora{E} \\
0 & 0 & \gr{C} & \roo{B} + \bl{D} & A + \ora{E} 
\end{bmatrix} \begin{bmatrix}
X_1\\
X_2\\
X_3\\
X_4\\
X_5\\
\end{bmatrix}\, .
\end{equation}
$\hfill \triangle$
\end{ex}


\section{Constructible Networks} \label{Input Networks and Constructible Networks}
In the previous section we have seen that every homogeneous network with asymmetric input $\mathcal{N}$ has a fundamental network $\mathcal{F}$. Moreover, the input networks of the original network $\mathcal{N}$ may all be realised as quotients of $\mathcal{F}$. We will generalise this relation between $\mathcal{N}$ and $\mathcal{F}$ in the definition of the class of constructible networks.

\begin{defi}[Constructible Networks]\label{ConstructibleNetworks}
Let $\mathcal{F} = (\Sigma, \Sigma)$ be a fixed complete fundamental network. The class of \textit{constructible networks} of $\mathcal{F}$ consists of all homogeneous coupled cell networks with asymmetric input for which the input network of each node is isomorphic to a quotient of $\mathcal{F}$. We denote the class of constructible networks of $\mathcal{F}$ by $\mathfrak{C}_{\mathcal{F}}$, or sometimes $\mathfrak{C}_{\Sigma}$. If $\mathcal{F}$ is the fundamental network of $\mathcal{N}$, then we will also write $\mathfrak{C}_{\mathcal{N}} := \mathfrak{C}_{\mathcal{F}}$. This should not cause any confusion, as the fundamental network of a fundamental network is isomorphic to itself, see e.g. \cite{CCN}. $\hfill \triangle$
\end{defi}

\begin{remk}\label{remkconstrntwrks}
Terms such as \textit{quotient network} and \textit{isomorphic} should be understood in the right category of coupled cell networks, namely one in which we consider graph fibrations between networks. In our setting, a graph fibration between two networks is a map of directed graphs such that arrow-type is preserved. This means that nodes are sent to nodes and arrows to arrows, in such a way that both arrow-type and source and target maps are respected.  To make sure we can speak of a type-preserving map between arrows, we impose that every network in $\mathfrak{C}_{\mathcal{F}}$ has to have exactly $\#\Sigma$ types of arrow, which are furthermore indexed by $\Sigma$. As our networks are all asymmetric, such a graph fibration is completely determined by its restriction to the nodes. In fact, a map $F$ between the nodes of two networks $\mathcal{N}_1$ and $\mathcal{N}_2$ (both with input maps indexed by $\Sigma$), determines a graph fibration if and only if $F \circ \sigma_1 = \sigma_2 \circ F$ for all $\sigma \in \Sigma$. (Here $\sigma_i$ denotes the input map of $\mathcal{N}_i$ indexed by $\sigma$, for $i \in \{1,2\}$.) Note that we do not assume that all input maps corresponding to the different arrow-types are distinct. A useful consequence of the definition of $\mathfrak{C}_{\mathcal{F}}$ in this way is that a single response function $f: V^{\Sigma} \rightarrow V$ allows us to construct admissible maps for all the networks in $\mathfrak{C}_{\mathcal{F}}$ simultaneously  in an unambiguous way. We also note that we call a network a quotient of another one if there is a surjective graph fibration from the latter network to the former. See \cite{deville} or \cite{fibr} for more on graph fibrations. $\hfill \triangle$ 
\end{remk}


\begin{figure}
\centering
\begin{tikzpicture}
	\node[circle,draw=black, fill=bloesh, fill opacity = 1, inner sep=1pt, minimum size=12pt] (0) at (0,0) {0};
	\node[circle,draw=black, fill=bloesh, fill opacity = 1, inner sep=1pt, minimum size=12pt] (1) at (0,2) {1};
	\node[circle,draw=black, fill=bloesh, fill opacity = 1, inner sep=1pt, minimum size=12pt] (c0) at (5,0) {0};
	\node[circle,draw=black, fill=bloesh, fill opacity = 1, inner sep=1pt, minimum size=12pt] (c1) at (4,2) {1};
	\node[circle,draw=black, fill=bloesh, fill opacity = 1, inner sep=1pt, minimum size=12pt] (c2) at (5,2) {2};
	\node[circle, inner sep=1pt, minimum size=12pt] (l) at (6,2) {$\dots$};
	\node[circle,draw=black, fill=bloesh, fill opacity = 1, inner sep=1pt, minimum size=12pt] (cn) at (7,2) {n};
	\draw [->,  >=stealth, thick, black, loop above, shorten <=2pt, shorten >=2pt] (0) to [out=-20,in=20, looseness=10]( 0);
	\draw [->,  >=stealth, thick, black, loop above, shorten <=2pt, shorten >=2pt] (1) to [out=-20,in=20 ,looseness=10]( 1);
	\draw [->,  >=stealth, thick, black, loop above, shorten <=2pt, shorten >=2pt] (c0) to [out=-20,in=20,looseness=10]( c0);
	\draw [->,  >=stealth, thick, black, loop above, shorten <=2pt, shorten >=2pt] (c1) to [out=70,in=110, looseness=10]( c1);
	\draw [->,  >=stealth, thick, black, loop above, shorten <=2pt, shorten >=2pt] (c2) to [out=70,in=110, looseness=10]( c2);	
	\draw [->,  >=stealth, thick, black, loop above, shorten <=2pt, shorten >=2pt] (cn) to [out=70,in=110, looseness=10]( cn);
	\draw [->,  >=stealth, dashed, thick, roo, shorten <=2pt, shorten >=2pt] (0) to [bend right = 20] (1);	
	\draw [->,  >=stealth, dashed, thick, roo, shorten <=2pt, shorten >=2pt] (c0) to [bend right = -20] (c1);	
	\draw [->,  >=stealth, dashed, thick, roo, shorten <=2pt, shorten >=2pt] (c0) to [bend right = -20] (c2);	
	\draw [->,  >=stealth, dashed, thick, roo, shorten <=2pt, shorten >=2pt] (c0) to [bend right = -20] (cn);	
	\draw [->,  >=stealth, dashed, thick, roo, loop above, shorten <=2pt, shorten >=2pt] (0) to [out=160,in=200, looseness=10](0);	
	\draw [->,  >=stealth, dashed, thick, roo, loop above, shorten <=2pt, shorten >=2pt] (c0) to [out=160,in=200, looseness=10](c0);	
\end{tikzpicture}
\caption{Left: an example of a fundamental network $\mathcal{F}$. \\ Right: A family of networks in $\mathfrak{C}_{\mathcal{F}}$ for $n \in \N\cup\{0\}.$}
\label{fig4}
\end{figure}
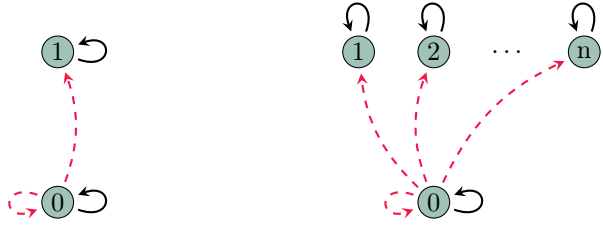

\begin{ex}\label{exam4}
Figure \ref{fig4} shows a fundamental network $\mathcal{F}$ on the left and an infinite family of (finite) networks belonging to $\mathfrak{C}_{\mathcal{F}}$ on the right. As a matter of fact, it is not hard to see that the elements of $\mathfrak{C}_{\mathcal{F}}$ are exactly all networks with connected components given by the networks on the right (possibly for $n=0$). $\hfill \triangle$
\end{ex}
\noindent The following lemma makes it easier to identify elements of $\mathfrak{C}_{\mathcal{F}}$.
\begin{lem}\label{less2check}
A network $\mathcal{N}$ belongs to $\mathfrak{C}_{\mathcal{F}}$ if and only if $\mathcal{N}$ can be covered by input networks that are quotients of ${\mathcal{F}}$.
\end{lem}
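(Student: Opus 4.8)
The plan is to prove both implications, the forward one being essentially immediate from the definition and the reverse one requiring a small amount of work to show that being locally a quotient of $\mathcal{F}$ already forces every input network to be such a quotient. First I would fix notation: say that a network $\mathcal{N} = (N, \Sigma)$ (with input maps indexed by $\Sigma$, as is required of any candidate member of $\mathfrak{C}_{\mathcal{F}}$) \emph{can be covered by input networks that are quotients of $\mathcal{F}$} if there is a collection of nodes $\{p_\alpha\}$ such that $N = \bigcup_\alpha N_{p_\alpha}$ and each input network $\mathcal{N}_{p_\alpha}$ is isomorphic to a quotient of $\mathcal{F}$.

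For the ``only if'' direction, suppose $\mathcal{N} \in \mathfrak{C}_{\mathcal{F}}$. By Definition \ref{ConstructibleNetworks}, \emph{every} input network $\mathcal{N}_p$ is isomorphic to a quotient of $\mathcal{F}$. Since trivially $N = \bigcup_{p \in N} N_p$ (as $p \in N_p$ for each $p$), the network $\mathcal{N}$ is covered by the input networks $\mathcal{N}_p$, all of which are quotients of $\mathcal{F}$. This direction needs nothing beyond unwinding the definitions.

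For the ``if'' direction, the key observation is that input networks are closed under the operation of passing to a sub-input-network: for any node $p$ and any $q \in N_p$, one has $N_q \subseteq N_p$, and moreover the input network $\mathcal{N}_q$ (computed inside $\mathcal{N}$) coincides with the input network of $q$ computed inside $\mathcal{N}_p$, because the relevant input maps are just restrictions and $N_q$ is invariant under all of $\Sigma$. So suppose $\mathcal{N}$ is covered by input networks $\mathcal{N}_{p_\alpha}$, each isomorphic to a quotient of $\mathcal{F}$, and let $q \in N$ be arbitrary. Choose $\alpha$ with $q \in N_{p_\alpha}$; then $N_q \subseteq N_{p_\alpha}$ and $\mathcal{N}_q$ is an input network of the network $\mathcal{N}_{p_\alpha}$. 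Now I would invoke that $\mathcal{N}_{p_\alpha}$ is (isomorphic to) a quotient of $\mathcal{F}$, hence itself lies in $\mathfrak{C}_{\mathcal{F}}$ — this uses that $\mathfrak{C}_{\mathcal{F}}$ is closed under quotients, which follows from the fact that a quotient of a quotient of $\mathcal{F}$ is again a quotient of $\mathcal{F}$ (composition of surjective graph fibrations), together with the definitional observation that the fundamental network of a fundamental network is $\mathcal{F}$ itself. Since $\mathcal{N}_{p_\alpha} \in \mathfrak{C}_{\mathcal{F}}$, by definition each of its input networks — in particular $\mathcal{N}_q$ — is isomorphic to a quotient of $\mathcal{F}$. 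As $q$ was arbitrary, every input network of $\mathcal{N}$ is a quotient of $\mathcal{F}$, so $\mathcal{N} \in \mathfrak{C}_{\mathcal{F}}$.

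The main obstacle is the bookkeeping in the reverse direction: one must carefully check that an input network $\mathcal{N}_q$ of $\mathcal{N}$, with $q$ lying in another input network $\mathcal{N}_{p_\alpha}$, is literally the same network (input maps and all) as the input network of $q$ formed inside $\mathcal{N}_{p_\alpha}$. This rests on the invariance of $N_q$ under $\Sigma$ and on the convention (stated in the text, just before Definition \ref{monoidt1}) that input maps of an input network are honest restrictions, never identified even when they happen to coincide — so that the ``quotient of $\mathcal{F}$'' structure is transported correctly along the chain $\mathcal{F} \twoheadrightarrow \mathcal{N}_{p_\alpha} \supseteq \mathcal{N}_q$. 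Once that compatibility is in hand, the argument is just transitivity of ``is a quotient of'' plus the self-reproducing property of fundamental networks.
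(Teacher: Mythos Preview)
Your proof is correct, and the forward direction matches the paper exactly. For the reverse direction, the paper proceeds more directly: given a surjective graph fibration $F: \mathcal{F} \to \mathcal{N}_p$ and a node $q \in \mathcal{N}_p$, it picks $\tau \in \Sigma$ with $F(\tau) = q$ and explicitly defines $G(\sigma) := F(\sigma \circ \tau)$, then verifies that $G$ is a graph fibration with image exactly $\mathcal{N}_q$. You instead route through the claim that $\mathcal{N}_{p_\alpha} \in \mathfrak{C}_{\mathcal{F}}$, which in the paper is the content of Proposition~\ref{cloosed} (closure under quotients, together with $\mathcal{F} \in \mathfrak{C}_{\mathcal{F}}$). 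That proposition appears \emph{after} this lemma but does not depend on it, so there is no logical circularity, only a forward reference. Note, however, that your one-line justification of closure under quotients (``composition of surjective graph fibrations'') is not quite complete: membership in $\mathfrak{C}_{\mathcal{F}}$ is a condition on input networks, not on the network itself, so one must also check that a surjective fibration restricts to a surjective fibration between input networks --- and once you unpack that for the fibration $\mathcal{F} \to \mathcal{N}_{p_\alpha}$ at the node $q$, you recover precisely the right-multiplication map $G(\sigma) = F(\sigma \circ \tau)$ that the paper writes down. So the two arguments are the same computation, differently packaged; the paper's version is self-contained at this point in the exposition, while yours defers the real work to a later proposition.
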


\begin{proof}
As the input networks of any network $\mathcal{N}$ clearly cover $\mathcal{N}$, it follows that any network in $\mathfrak{C}_{\mathcal{F}}$ is covered by input networks that are quotients of ${\mathcal{F}}$. Conversely, suppose $\mathcal{N}$ may be covered by input networks $(\mathcal{N}_p)_{p \in U}$ for some set of nodes $U$ of $\mathcal{N}$. Suppose furthermore that every $\mathcal{N}_p$ for $p \in U$ is isomorphic to some quotient of ${\mathcal{F}}$. We pick a node $q$ of ${\mathcal{N}}$, together with a node $p \in U$ such that $q \in \mathcal{N}_p$. As $\mathcal{N}_p$ may be realised as a quotient of ${\mathcal{F}}$, there exists a surjective graph fibration $F: \mathcal{F} \rightarrow \mathcal{N}_p$. Let $\tau \in \Sigma$ moreover be a node of ${\mathcal{F}}$ such that $F(\tau) = q$. We claim that the map $G: \mathcal{F} \rightarrow \mathcal{N}_p$ given by $G(\sigma) = F(\sigma \circ \tau)$ for all $\sigma \in \Sigma$ defines a graph fibration with image equal to $\mathcal{N}_q$. First of all, we see that 
\begin{equation}
\kappa G(\sigma) = \kappa F(\sigma \circ \tau) =  F(\kappa \circ ( \sigma \circ \tau)) = F((\kappa \circ  \sigma) \circ \tau) = G(\kappa \circ  \sigma)\, ,
\end{equation}
for all $\sigma, \kappa \in \Sigma$. Here we simply use $\kappa$ to denote the corresponding input map in both networks. This shows that $G$ is indeed a graph fibration. Secondly, it is clear that the image of $G$ is indeed a subnetwork of $\mathcal{N}_p$, and we note that $\mathcal{N}_r \subset \mathcal{N}_p$ for any node $r \in \mathcal{N}_p$. Lastly, we see that 
\begin{equation}
\im(G) = F(\Sigma\tau) = \Sigma F(\tau) = \Sigma(q) = \mathcal{N}_q \, .
\end{equation}
This shows that the input network of any node in $\mathcal{N}$ may be viewed as a quotient-network of $\mathcal{F}$, so that indeed $\mathcal{N} \in \mathfrak{C}_{\mathcal{F}}$. This proves the lemma.
\end{proof}

\begin{figure}
\centering
\begin{tikzpicture}
	\node[circle,draw=black, fill=bloesh, fill opacity = 1, inner sep=1pt, minimum size=12pt] (1) at (2.2,1.5) {1};
	\node[circle,draw=black, fill=bloesh, fill opacity = 1, inner sep=1pt, minimum size=12pt] (2) at (0.8,1.5) {2};
	\node[circle,draw=black, fill=bloesh, fill opacity = 1, inner sep=1pt, minimum size=12pt] (3) at (0,0) {3};
	\node[circle,draw=black, fill=bloesh, fill opacity = 1, inner sep=1pt, minimum size=12pt] (4) at (3,0) {4};
	\node[circle,draw=black, fill=bloesh, fill opacity = 1, inner sep=1pt, minimum size=12pt] (5) at (1.5,-1) {5};
	\draw [->,  >=stealth, thick, black, loop above, shorten <=2pt, shorten >=2pt] (1) to [out=70,in=110,looseness=10]( 1);
	\draw [->,  >=stealth, thick, black, loop above, shorten <=2pt, shorten >=2pt] (2) to [out=70,in=110,looseness=10]( 2);
	\draw [->,  >=stealth, thick, black, loop above, shorten <=2pt, shorten >=2pt] (3) to [out=175,in=215,looseness=10]( 3);
	\draw [->,  >=stealth, thick, black, loop above, shorten <=2pt, shorten >=2pt] (4) to [out=295,in=335,looseness=10]( 4);
	\draw [->,  >=stealth, thick, black, loop above, shorten <=2pt, shorten >=2pt] (5) to [out=275,in=315,looseness=10]( 5);
	\draw [->,  >=stealth, thick, roo, loop above, shorten <=2pt, shorten >=2pt] (3) to [out=115,in=155,looseness=10]( 3);
	\draw [->,  >=stealth, thick, roo, loop above, shorten <=2pt, shorten >=2pt] (4) to [out=55,in=95,looseness=10]( 4);
	\draw [->,  >=stealth, thick, roo, loop above, shorten <=2pt, shorten >=2pt] (5) to [out=215,in=255,looseness=10]( 5);
	\draw [->,  >=stealth, thick, roo, shorten <=2pt, shorten >=2pt] (1) to [bend right = 25] (2);
	\draw [->,  >=stealth, thick, roo, shorten <=2pt, shorten >=2pt] (2) to [bend right = 0] (1);	
	\draw [->>,  >=stealth, thick, green, loop above, shorten <=2pt, shorten >=2pt] (3) to [out=235,in=275,looseness=10]( 3);
	\draw [->>,  >=stealth, thick, green, shorten <=2pt, shorten >=2pt] (3) to [bend right = -10] (1);
	\draw [->>,  >=stealth, thick, green, shorten <=2pt, shorten >=2pt] (3) to [bend left = 20] (2);
	\draw [->>,  >=stealth, thick, green, shorten <=2pt, shorten >=2pt] (3) to [bend left = -20] (4);
	\draw [->>,  >=stealth, thick, green, shorten <=2pt, shorten >=2pt] (3) to [bend left = -20] (5);
	\draw [->,  >=stealth, dashed, thick, blue, loop above, shorten <=2pt, shorten >=2pt] (4) to [out=355,in=395,looseness=10]( 4);
	\draw [->,  >=stealth, dashed, thick, blue, shorten <=2pt, shorten >=2pt] (4) to [bend left = 0] (1);
	\draw [->,  >=stealth, dashed, thick, blue, shorten <=2pt, shorten >=2pt] (4) to [bend left = 5] (2);
	\draw [->,  >=stealth, dashed, thick, blue, shorten <=2pt, shorten >=2pt] (4) to [bend left = 0] (3);
	\draw [->,  >=stealth, dashed, thick, blue, shorten <=2pt, shorten >=2pt] (4) to [bend left = 45] (5);
	\draw [->>,  >=stealth, dashed, thick, orange, loop above, shorten <=2pt, shorten >=2pt] (4) to [out=235,in=275,looseness=10]( 4);		
	\draw [->>,  >=stealth, dashed, thick, orange, shorten <=2pt, shorten >=2pt] (4) to [bend right = 45] (1);
	\draw [->>,  >=stealth, dashed, thick, orange, shorten <=2pt, shorten >=2pt] (4) to [bend right = 15] (2);		
	\draw [->>,  >=stealth, dashed, thick, orange, shorten <=2pt, shorten >=2pt] (4) to [bend right = 20] (3);
	\draw [->>,  >=stealth, dashed, thick, orange, shorten <=2pt, shorten >=2pt] (4) to [bend right = -5] (5);
	\node[circle,draw=black, fill=bloesh, fill opacity = 1, inner sep=1pt, minimum size=12pt] (f1) at (8,2.5) {1};
	\node[circle,draw=black, fill=bloesh, fill opacity = 1, inner sep=1pt, minimum size=12pt] (f2) at (6.1,1.11) {2};
	\node[circle,draw=black, fill=bloesh, fill opacity = 1, inner sep=1pt, minimum size=12pt] (f3) at (6.82, -1.11) {3};
	\node[circle,draw=black, fill=bloesh, fill opacity = 1, inner sep=1pt, minimum size=12pt] (f4) at (9.18, -1.11) {4};
	\node[circle,draw=black, fill=bloesh, fill opacity = 1, inner sep=1pt, minimum size=12pt] (f5) at (9.90, 1.11) {5};
	\draw [->,  >=stealth, thick, black, loop above, shorten <=2pt, shorten >=2pt] (f1) to [out=90,in=135,looseness=10]( f1);
	\draw [->,  >=stealth, thick, black, loop above, shorten <=2pt, shorten >=2pt] (f2) to [out=162,in=202,looseness=10]( f2);
	\draw [->,  >=stealth, thick, black, loop above, shorten <=2pt, shorten >=2pt] (f3) to [out=234,in=274,looseness=10]( f3);
	\draw [->,  >=stealth, thick, black, loop above, shorten <=2pt, shorten >=2pt] (f4) to [out=306,in=346,looseness=10]( f4);
	\draw [->,  >=stealth, thick, black, loop above, shorten <=2pt, shorten >=2pt] (f5) to [out=328,in=368,looseness=10]( f5);
	\draw [->,  >=stealth, thick, roo, shorten <=2pt, shorten >=2pt] (f1) to [bend right = 20] (f2);
	\draw [->,  >=stealth, thick, roo, shorten <=2pt, shorten >=2pt] (f2) to [bend right = 25] (f1);
	\draw [->,  >=stealth, thick, roo, loop above, shorten <=2pt, shorten >=2pt] (f3) to [out=294,in=334,looseness=10]( f3);
	\draw [->,  >=stealth, thick, roo, shorten <=2pt, shorten >=2pt] (f4) to [bend right = 25] (f5);
	\draw [->,  >=stealth, thick, roo, shorten <=2pt, shorten >=2pt] (f5) to [bend right = 20] (f4);
	\draw [->>,  >=stealth, thick, green, loop above, shorten <=2pt, shorten >=2pt] (f3) to [out=175,in=215,looseness=10]( f3);
	\draw [->>,  >=stealth, thick, green, shorten <=2pt, shorten >=2pt] (f3) to [bend right = 13] (f1);
	\draw [->>,  >=stealth, thick, green, shorten <=2pt, shorten >=2pt] (f3) to [bend left = 20] (f2);
	\draw [->>,  >=stealth, thick, green, shorten <=2pt, shorten >=2pt] (f3) to [bend left = 20] (f4);
	\draw [->>,  >=stealth, thick, green, shorten <=2pt, shorten >=2pt] (f3) to [bend left = 10] (f5);
	\draw [->,  >=stealth, dashed, thick, blue, loop above, shorten <=2pt, shorten >=2pt] (f4) to [out=246,in=286 ,looseness=10](f4);
	\draw [->,  >=stealth, dashed, thick, blue, shorten <=2pt, shorten >=2pt] (f4) to [bend left = 0] (f1);
	\draw [->,  >=stealth, dashed, thick, blue, shorten <=2pt, shorten >=2pt] (f4) to [bend left = 0] (f2);	
	\draw [->,  >=stealth, dashed, thick, blue, shorten <=2pt, shorten >=2pt] (f4) to [bend left = 0] (f3);	
	\draw [->,  >=stealth, dashed, thick, blue, shorten <=2pt, shorten >=2pt] (f4) to [bend left = 0] (f5);
	\draw [->>,  >=stealth, dashed, thick, orange, shorten <=2pt, shorten >=2pt] (f5) to [bend right = 20] (f1);	
	\draw [->>,  >=stealth, dashed, thick, orange, shorten <=2pt, shorten >=2pt] (f5) to [bend right = 30] (f2);		
	\draw [->>,  >=stealth, dashed, thick, orange, shorten <=2pt, shorten >=2pt] (f5) to [bend right = 50] (f3);	
	\draw [->>,  >=stealth, dashed, thick, orange, shorten <=2pt, shorten >=2pt] (f5) to [bend left = 50] (f4);
	\draw [->>,  >=stealth, dashed, thick, orange, loop above, shorten <=2pt, shorten >=2pt] (f5) to [out=408,in=448,looseness=10](f5);
	\draw [->,  >=stealth, decorate, thick, decoration={snake,amplitude= 0.4mm,segment length=4mm,post length=1mm}, purple]  (5.5,0) to node[above] {\begin{tabular}{l} $1 \mapsfrom 1$\\ $2 \mapsfrom 2$ \\ $3 \mapsfrom 3$ \\ $4 \mapsfrom 4,5$  \end{tabular}} node[below]{{\begin{tabular}{l} $5 \mapsfrom 1,2$ \\ $3 \mapsfrom 3$ \\ $4 \mapsfrom 4,5$  \end{tabular}}} (4,0);
\end{tikzpicture}
\caption{Left: an example of a network in $\mathfrak{C}_{\mathcal{F}}$, for $\mathcal{F}$ the fundamental network on the right. \\
Right: a fundamental network equal to the one depicted on the right of Figure \ref{fig3}. The input network of node $1$ in the network on the left may be realised as a quotient of the network on the right by sending  node $1$ to $1$, $2$ to $2$, $3$ to $3$ and $4$ and $5$ to $4$. The input network of node $5$ in the network on the left may be seen as a quotient of the network on the right by sending nodes $1$ and $2$ to $5$, $3$ to $3$ and $4$ and $5$ to $4$.}
\label{fig5}
\end{figure}
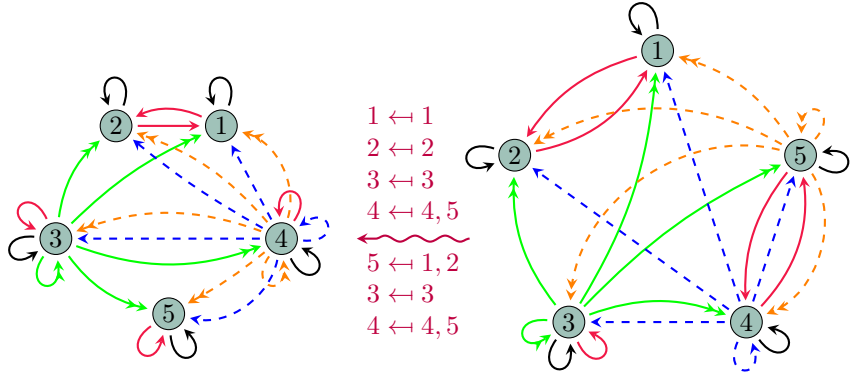

\begin{ex}\label{exam5}
Let $\mathcal{F}$ denote the fundamental network on the right of Figure \ref{fig3}. The left side of Figure \ref{fig5} shows an example of a network that is contained in $\mathfrak{C}_{\mathcal{F}}$. As the network on the left of  Figure \ref{fig5} can be covered by the input networks of nodes $1$ and $5$, it suffices by Lemma \ref{less2check} to check that these two input networks are quotients of $\mathcal{F}$. Figure \ref{fig5} also describes the relevant graph fibrations that show that this is indeed the case (for nodes $1$ and $5$ above and below the curvy arrow, respectively). Note that in the network on the left, the blue and orange arrow types (that is, the two dashed types) define the same input map $[4,4,4,4,4]$. We also point out that the network on the left cannot be realised as a quotient of the fundamental network on the right. If it could, then the networks would have to be isomorphic (both have five nodes), but then all arrow types would define different input maps. Alternatively, it can be seen that a fundamental network is always equal to the input network of the unit in $\Sigma$, whereas the network on the left of Figure \ref{fig5} does not equal the input network of any single node. $\hfill \triangle$
\end{ex}

\noindent Next, we show that $\mathfrak{C}_{\mathcal{F}}$ is closed under many natural operations.

\begin{prop}\label{cloosed}
If $\mathcal{F}$ is the fundamental network of a network $\mathcal{N}$, then we have $\mathcal{N} \in \mathfrak{C}_{\mathcal{F}}$. In particular, it follows that $\mathcal{F} \in \mathfrak{C}_{\mathcal{F}}$. Moreover, for any two constructible networks $\mathcal{M}, \mathcal{N} \in \mathfrak{C}_{\mathcal{F}}$ it holds that $\mathfrak{C}_{\mathcal{F}}$ contains the disjoint union of the two networks $\mathcal{M}\sqcup \mathcal{N}$, any subnetwork of $\mathcal{N}$ and any quotient network of $\mathcal{N}$.
\end{prop}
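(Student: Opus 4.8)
The plan is to verify each closure property separately, in each case by exhibiting the required graph fibrations and invoking Lemma~\ref{less2check}, which reduces membership in $\mathfrak{C}_{\mathcal{F}}$ to the condition that a network can be covered by input networks that are quotients of $\mathcal{F}$. The first claim, that $\mathcal{N} \in \mathfrak{C}_{\mathcal{F}}$ when $\mathcal{F}$ is the fundamental network of $\mathcal{N}$, is essentially the content of Lemma~\ref{includedyeah}: for each node $p$ of $\mathcal{N}$, the map $\theta_p$ (or rather the induced map on node sets $\sigma \mapsto \sigma(p)$ from $\Sigma$ to $N_p$) is a surjective graph fibration $\mathcal{F} \to \mathcal{N}_p$, because it intertwines the $\Sigma$-actions by associativity of composition: $\sigma \mapsto \sigma(p)$ sends $\kappa \circ \sigma$ to $(\kappa\circ\sigma)(p) = \kappa(\sigma(p))$. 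Since the input networks $(\mathcal{N}_p)_{p\in N}$ cover $\mathcal{N}$, Lemma~\ref{less2check} gives $\mathcal{N} \in \mathfrak{C}_{\mathcal{F}}$; applying this to $\mathcal{F}$ itself (whose fundamental network is isomorphic to $\mathcal{F}$) gives $\mathcal{F} \in \mathfrak{C}_{\mathcal{F}}$.

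For the disjoint union $\mathcal{M} \sqcup \mathcal{N}$: the input network of any node of $\mathcal{M}\sqcup\mathcal{N}$ lies entirely within the component containing that node, so it is an input network of $\mathcal{M}$ or of $\mathcal{N}$, hence a quotient of $\mathcal{F}$ by hypothesis; these cover $\mathcal{M}\sqcup\mathcal{N}$, so Lemma~\ref{less2check} applies. For a subnetwork $\mathcal{N}' \subseteq \mathcal{N}$ (meaning a union of some input networks, so that $\mathcal{N}'$ is itself closed under all input maps), every node $q$ of $\mathcal{N}'$ has the same input network computed inside $\mathcal{N}'$ as inside $\mathcal{N}$ — because $N_q$ depends only on iterating the input maps starting from $q$ — so again $\mathcal{N}'$ is covered by input networks that are quotients of $\mathcal{F}$. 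For a quotient network $\mathcal{P}$ of $\mathcal{N}$, given by a surjective fibration $\phi: \mathcal{N} \to \mathcal{P}$: for a node $\bar q$ of $\mathcal{P}$, pick $q \in \phi^{-1}(\bar q)$; then $\phi$ restricts to a surjective fibration $\mathcal{N}_q \to \mathcal{P}_{\bar q}$ (the image of $\mathcal{N}_q$ is $\Sigma$-invariant and contains $\bar q$, hence contains $\mathcal{P}_{\bar q}$, and conversely is contained in it since $\phi$ intertwines the actions), and composing with the surjective fibration $\mathcal{F} \to \mathcal{N}_q$ from the first part realises $\mathcal{P}_{\bar q}$ as a quotient of $\mathcal{F}$. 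These input networks cover $\mathcal{P}$, so Lemma~\ref{less2check} finishes this case.

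The main subtlety — the step I expect to require the most care — is pinning down precisely what \emph{subnetwork} means in this category and checking that input networks behave well under the relevant operations: one must ensure a ``subnetwork'' is required to be closed under all input maps (equivalently, is a union of input networks), since an arbitrary node subset need not inherit a well-defined admissible structure, and one must verify that quotienting and then taking the input network of a class agrees with taking an input network upstairs and then quotienting. All of these reduce to the single algebraic fact that graph fibrations (in the asymmetric setting) are exactly the node maps intertwining the $\Sigma$-actions, as recorded in Remark~\ref{remkconstrntwrks}, together with the observation that the composition of surjective fibrations is a surjective fibration. Once those bookkeeping points are settled, each closure property is a one-line application of Lemma~\ref{less2check}.
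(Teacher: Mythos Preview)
Your proof is correct and follows essentially the same approach as the paper's: both arguments verify each closure property by exhibiting, for every node of the target network, a surjective graph fibration from $\mathcal{F}$ onto its input network (using $\sigma \mapsto \sigma(p)$ for the first claim, and composing the given fibration $\mathcal{F} \to \mathcal{N}_q$ with the restricted fibration $\mathcal{N}_q \to \mathcal{P}_{\bar q}$ for the quotient case). The only minor cosmetic difference is that you route everything through Lemma~\ref{less2check}, whereas the paper verifies the definition directly; also, in the quotient case the surjective fibration $\mathcal{F} \to \mathcal{N}_q$ comes from the hypothesis $\mathcal{N} \in \mathfrak{C}_{\mathcal{F}}$ rather than ``from the first part'', but this is just a phrasing issue.
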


\begin{proof}
If $\mathcal{F}$ is the fundamental network of $\mathcal{N}$, then it follows from Lemma \ref{includedyeah} that $\mathcal{N} \in \mathfrak{C}_{\mathcal{F}}$. More precisely, for a node $p$ of $\mathcal{N}$ we have a surjective graph fibration from $\mathcal{F}$ onto $\mathcal{N}_p$ given by $\sigma \mapsto \sigma(p)$ for all $\sigma \in \Sigma$. As $\mathcal{F}$ is its own fundamental network, we also find  $\mathcal{F} \in \mathfrak{C}_{\mathcal{F}}$.  It follows directly from the definition of $\mathfrak{C}_{\mathcal{F}}$ that
$\mathcal{M}, \mathcal{N} \in \mathfrak{C}_{\mathcal{F}}$ implies $\mathcal{M} \sqcup \mathcal{N} \in \mathfrak{C}_{\mathcal{F}}$. Likewise, any subnetwork of $\mathcal{N} \in  \mathfrak{C}_{\mathcal{F}}$ is contained in $\mathfrak{C}_{\mathcal{F}}$, as every input network of a node in a subnetwork of $\mathcal{N}$ is equal to the input network of that same node in $\mathcal{N}$. It remains to show that any quotient of a constructible network $\mathcal{N}$ is again constructible. To this end, suppose $F$ is a surjective graph fibration from the constructible network $\mathcal{N}$ to a network $\mathcal{M}$. We pick a node $p \in \mathcal{M}$, so that we have to show that the corresponding input network $\mathcal{M}_p$ is a quotient of $\mathcal{F}$. To this end, let $q$ be a node of $\mathcal{N}$ such that $F(q) = p$. We claim that $F$ restricts to a surjective graph fibration from the input network $\mathcal{N}_q$ of $q$ in $\mathcal{N}$ to $\mathcal{M}_p$. To see this, note that any node in $\mathcal{N}_q$ is of the form $\sigma(q)$ for some $\sigma \in \Sigma$. It follows that $F(\sigma(q)) = \sigma F(q) = \sigma(p) \in \mathcal{M}_p$, so that $F$ indeed maps $\mathcal{N}_q$ to $\mathcal{M}_p$. (As usual, we write $\sigma$ for the corresponding input map in both networks). To show surjectivity, we note that any element of $\mathcal{M}_p$ is of the form $\sigma(p) = \sigma(F(q)) = F(\sigma(q))$ for some $\sigma \in \Sigma$. As it clearly holds that $\sigma(q) \in \mathcal{N}_q$, we conclude that $F|_{\mathcal{N}_q} : \mathcal{N}_q \rightarrow \mathcal{M}_p$ is indeed a surjective graph fibration. By definition, there exists a surjective graph fibration $G$ from $\mathcal{F}$ onto $\mathcal{N}_q$, and we conclude that there is a surjective graph fibration $F|_{\mathcal{N}_q} \circ G: \mathcal{F} \rightarrow \mathcal{M}_p$. As this holds for any node $p$ of $\mathcal{M}$, we see that indeed $M \in  \mathfrak{C}_{\mathcal{F}}$. This concludes the proof.
\end{proof}

\noindent Even though some elements of $\Sigma$ might define the same input map in a constructible network $\mathcal{N} \in \mathfrak{C}_{\Sigma}$ (see Example \ref{exam5}), we may still say that $\mathcal{N}$ is complete, given that $\mathcal{F}$ is. To make this statement precise, let us denote by $\sigma_{\mathcal{N}}$ the input map on the nodes of $\mathcal{N} \in \mathfrak{C}_{\Sigma}$ corresponding to the monoid element $\sigma \in \Sigma$. We have the following statement.

 \begin{lem}\label{stillcloded1}
If $\sigma$, $\tau$ and $\kappa$ are elements of the monoid $\Sigma$ such that $\sigma \circ \tau = \kappa$, then for any constructible network $\mathcal{N} \in \mathfrak{C}_{\Sigma}$ we have $\sigma_{\mathcal{N}} \circ \tau_{\mathcal{N}} = \kappa_{\mathcal{N}}$. Moreover, for $e$ the unit of $\Sigma$ it holds that $e_{\mathcal{N}}$ is the identity map on the nodes of $\mathcal{N}$.
 \end{lem}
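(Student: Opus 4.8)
The plan is to verify the identity first at the level of the fundamental network $\mathcal{F} = (\Sigma, \Sigma)$, where it is immediate, and then propagate it to an arbitrary constructible network through the surjective graph fibrations provided by Definition \ref{ConstructibleNetworks}. Concretely, recall from Definition \ref{fundamet} that in $\mathcal{F}$ the input map indexed by $\sigma \in \Sigma$ is the left multiplication $\rho \mapsto \sigma \circ \rho$ on $\Sigma$. Hence, writing $\sigma_{\mathcal{F}}$, $\tau_{\mathcal{F}}$, $\kappa_{\mathcal{F}}$ for these maps and using only associativity of $\Sigma$, for every $\rho \in \Sigma$ we have $\sigma_{\mathcal{F}}(\tau_{\mathcal{F}}(\rho)) = \sigma \circ (\tau \circ \rho) = (\sigma \circ \tau) \circ \rho = \kappa \circ \rho = \kappa_{\mathcal{F}}(\rho)$, and $e_{\mathcal{F}}(\rho) = e \circ \rho = \rho$. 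So the claim holds on the fundamental network itself.

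Next I would transfer this to the input networks. Fix $\mathcal{N} \in \mathfrak{C}_{\Sigma}$ and a node $x$ of $\mathcal{N}$. By definition of a constructible network, $\mathcal{N}_x$ is isomorphic to a quotient of $\mathcal{F}$, and composing that isomorphism with the quotient fibration gives a surjective graph fibration $G \colon \mathcal{F} \to \mathcal{N}_x$. Since both networks are asymmetric with arrow types indexed by $\Sigma$, Remark \ref{remkconstrntwrks} gives the intertwining $G \circ \sigma_{\mathcal{F}} = \sigma_{\mathcal{N}_x} \circ G$ for every $\sigma \in \Sigma$. For any node $y$ of $\mathcal{N}_x$, pick $\rho \in \Sigma$ with $G(\rho) = y$; applying the intertwining twice and then the computation in $\mathcal{F}$,
\[
\sigma_{\mathcal{N}_x}\bigl(\tau_{\mathcal{N}_x}(y)\bigr) = \sigma_{\mathcal{N}_x}\bigl(G(\tau_{\mathcal{F}}(\rho))\bigr) = G\bigl(\sigma_{\mathcal{F}}(\tau_{\mathcal{F}}(\rho))\bigr) = G(\kappa_{\mathcal{F}}(\rho)) = \kappa_{\mathcal{N}_x}(y),
\]
and likewise $e_{\mathcal{N}_x}(y) = G(e_{\mathcal{F}}(\rho)) = G(\rho) = y$. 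As $G$ is surjective, these hold for all $y$, so $\sigma_{\mathcal{N}_x} \circ \tau_{\mathcal{N}_x} = \kappa_{\mathcal{N}_x}$ and $e_{\mathcal{N}_x} = \Id$ on the nodes of $\mathcal{N}_x$.

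Finally I would lift the identity from $\mathcal{N}_x$ back to $\mathcal{N}$. The input network $\mathcal{N}_x$ is the subnetwork of $\mathcal{N}$ on $N_x$ with input maps obtained by restriction, and $N_x$ is invariant under every input map of $\mathcal{N}$; in particular $x \in N_x$ and $\sigma_{\mathcal{N}}(x) = \sigma_{\mathcal{N}_x}(x) \in N_x$ for all $\sigma \in \Sigma$. Therefore $\sigma_{\mathcal{N}}(\tau_{\mathcal{N}}(x)) = \sigma_{\mathcal{N}_x}(\tau_{\mathcal{N}_x}(x)) = \kappa_{\mathcal{N}_x}(x) = \kappa_{\mathcal{N}}(x)$ and $e_{\mathcal{N}}(x) = e_{\mathcal{N}_x}(x) = x$; since $x$ was arbitrary, $\sigma_{\mathcal{N}} \circ \tau_{\mathcal{N}} = \kappa_{\mathcal{N}}$ and $e_{\mathcal{N}} = \Id$, as claimed. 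The argument is essentially bookkeeping; the only point requiring care is keeping the three layers straight — the monoid $\Sigma$, the fundamental network $\mathcal{F}$ whose input maps are left multiplications, and the constructible network $\mathcal{N}$ whose input maps are only accessed through fibrations onto the input subnetworks $\mathcal{N}_x$ — and using the restriction/invariance of input networks correctly so that a statement proved on $\mathcal{N}_x$ genuinely transfers to $\mathcal{N}$.
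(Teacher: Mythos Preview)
Your proof is correct and follows essentially the same three-step route as the paper: verify the identity on $\mathcal{F}$ by associativity, push it through the surjective graph fibration to each input network $\mathcal{N}_x$, and then use that the input maps of $\mathcal{N}_x$ are restrictions of those of $\mathcal{N}$ (together with $x \in N_x$) to conclude for every node of $\mathcal{N}$. The only difference is presentational; the paper states the fundamental-network step as immediate from the definition and writes the fibration step for a generic quotient $\mathcal{M}$ before specialising to $\mathcal{N}_p$, whereas you spell out the associativity computation and go directly to $\mathcal{N}_x$.
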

 
 \begin{proof}
 First of all, by definition of the fundamental network $\mathcal{F} := (\Sigma, \Sigma)$ we have $\sigma_{\mathcal{F}} \circ \tau_{\mathcal{F}} = \kappa_{\mathcal{F}}$ and $e_{\mathcal{F}} = \Id_{\mathcal{F}}$. Next, suppose we have a surjective graph fibration $F$ from $\mathcal{F}$ to a network $\mathcal{M}$. Let $p$ be any node of $\mathcal{M}$ and suppose $q$ is a node of $\mathcal{F}$ such that $F(q) = p$. It follows that
 \begin{align}\label{zoveel234}
 (\sigma_{\mathcal{M}} \circ \tau_{\mathcal{M}})(p) &=  (\sigma_{\mathcal{M}} \circ \tau_{\mathcal{M}})(F(q)) = F((\sigma_{\mathcal{F}} \circ \tau_{\mathcal{F}})(q)) = F(\kappa_{\mathcal{F}}(q)) \\ \nonumber
 &= \kappa_{\mathcal{M}}(F(q)) =  \kappa_{\mathcal{M}}(p) \, .
 \end{align}
 Likewise, we find
 \begin{equation}\label{zoveel235}
 e_{\mathcal{M}}(p) =  e_{\mathcal{M}}(F(q)) = F(e_{\mathcal{F}}(q)) = F(q) = p\, .
 \end{equation}
 As equations \eqref{zoveel234} and \eqref{zoveel235} hold for any node $p$ in $\mathcal{M}$, we conclude that $\sigma_{\mathcal{M}} \circ \tau_{\mathcal{M}} = \kappa_{\mathcal{M}}$ and $e_{\mathcal{M}} = \Id_{\mathcal{M}}$. In particular, for any input network $\mathcal{N}_p$ of a node $p$ in a constructible network $\mathcal{N}$, we find $\sigma_{\mathcal{N}_p} \circ \tau_{\mathcal{N}_p} = \kappa_{\mathcal{N}_p}$ and $e_{\mathcal{N}_p} = \Id_{\mathcal{N}_p}$. Lastly, as we have $\iota_{\mathcal{N}_p} = (\iota_{\mathcal{N}})\mid_{\mathcal{N}_p}$ for any $\iota \in \Sigma$, we conclude that $\sigma_{\mathcal{N}} \circ \tau_{\mathcal{N}} = \kappa_{\mathcal{N}}$ and that $e_{\mathcal{N}} = \Id_{\mathcal{N}}$. This completes the proof.
\end{proof}

\noindent Let $\mathcal{N} \in \mathfrak{C}_{\Sigma}$ be a constructible network for the monoid $\Sigma$. It follows from Lemma \ref{stillcloded1} that we may construct a sub-monoid $\Sigma_{\mathcal{N}}$ of $\Sigma$ whose elements are exactly the input maps $\sigma_{\mathcal{N}}$ for $\sigma \in \Sigma$. More precisely, we have a surjective morphism of monoids from $\Sigma$ to $\Sigma_{\mathcal{N}}$ given by $\sigma \mapsto \sigma_{\mathcal{N}}$. The consequences of such a morphism are explored in \cite{proj}, but for now it is enough to realise that the class of (linear) admissible maps for a network does not change if we discard double input maps. As a result, we see that the admissible maps for $\mathcal{N}$ are the same as those for some complete network (with corresponding monoid of maps $\Sigma_{\mathcal{N}}$). Moreover, it is shown in \cite{CCN} that the (linear) admissible maps for a complete network are closed under composition. We conclude that the same holds for the admissible maps of $\mathcal{N} \in \mathfrak{C}_{\Sigma}$. In other words, for any two linear response functions $f,g: V^{\Sigma} \rightarrow V$ there exists a (possibly non-unique) linear response function $h: V^{\Sigma} \rightarrow V$ such that $\gamma^{\mathcal{N}}_f \circ \gamma^{\mathcal{N}}_g = \gamma^{\mathcal{N}}_h$, where $\gamma^{\mathcal{N}}_{\bullet}$ denotes an admissible map for $\mathcal{N}$.  As we will need this result later on, we summarise it in a corollary.

\begin{cor}\label{identityplusmulti}
Let $\mathcal{N} \in \mathfrak{C}_{\Sigma}$ be a constructible network for the monoid $\Sigma$, and denote by $\gamma^{\mathcal{N}}_{f}$ an admissible map for $\mathcal{N}$ corresponding to the response function $f$. For any two linear response functions $f,g: V^{\Sigma} \rightarrow V$, there exists a linear response function $h: V^{\Sigma} \rightarrow V$ such that $\gamma^{\mathcal{N}}_f \circ \gamma^{\mathcal{N}}_g = \gamma^{\mathcal{N}}_h$. 
\end{cor}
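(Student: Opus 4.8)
The plan is to reduce the statement for an arbitrary constructible network $\mathcal{N} \in \mathfrak{C}_{\Sigma}$ to the already-known case of a \emph{complete} network, and then to the fundamental network, where Lemma \ref{symmetryyeah} applies. First I would recall, as noted in the discussion preceding the corollary, that the class of linear admissible maps of $\mathcal{N}$ depends only on the collection of distinct input maps $\{\sigma_{\mathcal{N}} : \sigma \in \Sigma\}$ on the nodes of $\mathcal{N}$. By Lemma \ref{stillcloded1} this collection forms a submonoid $\Sigma_{\mathcal{N}}$ of the full transformation monoid on the nodes of $\mathcal{N}$, with unit $e_{\mathcal{N}} = \Id_{\mathcal{N}}$. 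Hence, after discarding duplicate input maps, $\mathcal{N}$ may be regarded as a \emph{complete} network with monoid $\Sigma_{\mathcal{N}}$ (this identification does not change the set of admissible maps: a response function $f : V^{\Sigma} \to V$ factors through the projection $V^{\Sigma} \to V^{\Sigma_{\mathcal{N}}}$ obtained by summing over preimages, exactly as in Remark \ref{onesbigger231} read in reverse). So it suffices to prove the closure-under-composition statement for complete networks.

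For a complete network, I would invoke Lemma \ref{symmetryyeah}. Let $\mathcal{F}' = (\Sigma_{\mathcal{N}}, \Sigma_{\mathcal{N}})$ be the fundamental network of $\mathcal{N}$ (completeness makes this again complete), and let $(A_{\sigma})_{\sigma \in \Sigma_{\mathcal{N}}}$ be the representation from that lemma on $V^{\Sigma_{\mathcal{N}}}$. The linear maps $\Gamma_f$ are \emph{exactly} the linear maps $F : V^{\Sigma_{\mathcal{N}}} \to V^{\Sigma_{\mathcal{N}}}$ commuting with every $A_{\sigma}$. Given two linear response functions $f, g$, the composite $\Gamma_f \circ \Gamma_g$ again commutes with every $A_{\sigma}$ (a composite of equivariant maps is equivariant), hence by the converse direction of Lemma \ref{symmetryyeah} it equals $\Gamma_h$ for a unique linear response function $h$; explicitly $h = (\Gamma_f \circ \Gamma_g)_e$ with $e$ the unit. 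This proves the claim at the level of the fundamental network. To pass back to $\mathcal{N}$ itself, I would use Lemma \ref{includedyeah}: each $\gamma^{\mathcal{N}}_{f}$ is (for $\mathcal{N}$ an input network, or a quotient thereof, i.e. any constructible network) conjugate — via the injective intertwiner $\theta_p$ onto the robust synchrony space $S_p$, or more generally via the surjection $\psi$ onto a quotient — to the restriction of $\Gamma_f$ to an invariant subspace. Since $\Gamma_f \Gamma_g = \Gamma_h$, restricting both sides to that invariant subspace and transporting along the intertwiner gives $\gamma^{\mathcal{N}}_f \circ \gamma^{\mathcal{N}}_g = \gamma^{\mathcal{N}}_h$, with the \emph{same} $h$. (One must check $h$ can be chosen so that $\gamma^{\mathcal{N}}_h$ is genuinely the restriction; this is immediate because $\gamma^{\mathcal{N}}_{\bullet}$ is built from the same response function slot-wise, compatibly with all the intertwiners $\theta_p$, $\psi$.)

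The main obstacle is bookkeeping rather than conceptual: making rigorous the statement "admissible maps don't change when duplicate input maps are identified," i.e. that composing in $V^{\Sigma}$ versus in $V^{\Sigma_{\mathcal{N}}}$ gives compatible answers, so that the $h$ produced upstairs descends to the right object downstairs. Concretely, one needs that the quotient map of monoids $\Sigma \twoheadrightarrow \Sigma_{\mathcal{N}}$ induces a surjection on the spaces of linear response functions $V^{\Sigma} \to V$ and $V^{\Sigma_{\mathcal{N}}} \to V$ intertwining the two composition products on admissible maps; this is essentially the content of the morphism-of-monoids discussion referenced to \cite{proj}, and I would cite it. Granting that, the argument is a short diagram chase. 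An alternative, fully self-contained route avoiding the detour through $\mathcal{F}'$ entirely: for $\mathcal{N}$ complete one has the formula $(\Gamma_f(X))_{\sigma} = (f \circ A_{\sigma})(X)$ stated at the end of Section \ref{Homogeneous Networks and the Fundamental Network}; then $(\Gamma_f \circ \Gamma_g)_{\sigma} = f \circ A_{\sigma} \circ \Gamma_g = f \circ \Gamma_g \circ A_{\sigma}$ by equivariance $= (\Gamma_g)_{e} \circ \cdots$ — more cleanly, set $h := f \circ \Gamma_g$, a linear map $V^{\Sigma} \to V$, i.e. a linear response function, and then $(\Gamma_f \circ \Gamma_g)_{\sigma} = h \circ A_{\sigma} = (\Gamma_h)_{\sigma}$ for all $\sigma$, so $\Gamma_f \circ \Gamma_g = \Gamma_h$. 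This explicit $h = f \circ \Gamma_g$ is the cleanest version and I would present it first, then transport to general $\mathcal{N} \in \mathfrak{C}_{\Sigma}$ via Lemma \ref{includedyeah} as above.
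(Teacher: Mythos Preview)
Your approach is essentially the paper's: reduce to a complete network via Lemma \ref{stillcloded1}, then use that admissible maps for complete networks are closed under composition. The paper simply cites \cite{CCN} for that second step; you instead reprove it via the fundamental network and Lemma \ref{symmetryyeah}, which is a legitimate and natural way to fill in the citation (your explicit $h = f \circ \Gamma_g$ is exactly the formula behind the cited result).

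One imprecision worth flagging in your ``pass back'' step: you write that $\gamma^{\mathcal{N}}_f$ is conjugate to a restriction of $\Gamma_f$ via Lemma \ref{includedyeah}, but that lemma only yields this for \emph{input networks} $\mathcal{N}_p$, not for $\mathcal{N}$ itself --- a constructible (or even complete) network need not be a quotient of its fundamental network (cf.\ Example \ref{exam5}). The correct route, which the paper later spells out in Proposition \ref{third-half}, is: establish $\gamma^{\mathcal{N}_p}_f \circ \gamma^{\mathcal{N}_p}_g = \gamma^{\mathcal{N}_p}_h$ for each input network via $\theta_p$, then use the surjections $\psi_p: V^{\mathcal{N}} \to V^{\mathcal{N}_p}$ together with $\bigcap_p \ker \psi_p = 0$ to deduce $\gamma^{\mathcal{N}}_f \circ \gamma^{\mathcal{N}}_g = \gamma^{\mathcal{N}}_h$. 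You gesture at this with ``via the surjection $\psi$'', so the idea is there; it just needs to be stated as a two-step descent (fundamental $\to$ input networks $\to$ $\mathcal{N}$) rather than a single conjugacy.
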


\begin{remk}[Motifs]\label{motifs}
The definition of $\mathfrak{C}_{\mathcal{F}}$ may be interpreted as describing all networks that can be made by gluing together networks from a certain finite set of \textit{motifs}. More precisely, these motifs are exactly all quotient networks of ${\mathcal{F}}$, and gluing means identifying subnetworks.   $\hfill \triangle$ 
\end{remk}


\section{Examples} \label{Examples}
Now that we have explained all the concepts in Theorem \ref{main1}, we may illustrate it further with some examples.
\begin{ex}\label{exam6}
We return to our running example, given by the left hand side of Figure \ref{fig3}. Example \ref{exam3} describes the linear admissible maps for this network and its fundamental network, given by the right side of Figure \ref{fig3}. They are given by
\begin{equation}\label{gammafex1222}
\arraycolsep=1.4pt\def\arraystretch{1}
\gamma_f = \begin{bmatrix}
A + \roo{B} + \gr{C}&  \ora{E} & \bl{D}\\
\gr{C} & A+ \ora{E} & \roo{B} + \bl{D} \\
\gr{C} &\roo{B} +  \ora{E}& A+ \bl{D} 
\end{bmatrix} \, 
\end{equation}
and
\begin{equation}\label{gammafex14212}
\arraycolsep=1.4pt\def\arraystretch{1}
\Gamma_f = \begin{bmatrix}
A & \roo{B} & \gr{C} &  \bl{D} & \ora{E} \\
 \roo{B} & A& \gr{C} &  \bl{D} & \ora{E} \\
0 & 0 & A + \roo{B} + \gr{C} &  \bl{D} & \ora{E} \\
0 & 0 & \gr{C} &  A + \bl{D} & \roo{B} + \ora{E} \\
0 & 0 & \gr{C} & \roo{B} + \bl{D} & A + \ora{E} 
\end{bmatrix} \, .
\end{equation}
Here we have written a general linear response function $f$ as 
\begin{equation}\label{lini0122}
f(w) =Aw_1 + \roo{B}w_2 + \gr{C}w_3 + \bl{D}w_4 + \ora{E}w_5\, ,
\end{equation}
for $w = (w_1, w_2, w_3, w_4, w_5) \in V^5$ and with $A, \roo{B}, \gr{C}, \bl{D}$ and  \ora{E} linear maps from $V$ to $V$. Restricting $\gamma_f$ to the robust synchrony space $S = \{x_2 = x_3\}$ gives rise to an admissible map for the corresponding quotient network. It is given by 
\begin{equation}\label{gammafex12222}
\arraycolsep=1.4pt\def\arraystretch{1}
\gamma^{\mathcal{S}}_f := \gamma_f|_S = \begin{bmatrix}
A + \roo{B} + \gr{C}&  \ora{E} + \bl{D}\\
\gr{C} & A+ \ora{E} + \roo{B} + \bl{D} \\
\end{bmatrix} \, \, .
\end{equation}
As always, we have the network multiplier $\Lambda^1(A, \dots, \ora{E}) = A + \dots + \ora{E}$. Moreover, we see that 
\begin{equation}
\tr(\gamma^{\mathcal{S}}_f) = 2A + 2\roo{B} + \gr{C} + \bl{D} + \ora{E} =  (A + \roo{B} + \gr{C} + \bl{D} + \ora{E}) + (A+\roo{B})\, ,
\end{equation}
in the case of $V=\C$. Hence, we find a second network multiplier given by $\Lambda^2(A, \dots, \ora{E}) = A + \roo{B}$. Likewise, we have 
\begin{equation}
\tr(\gamma_f) = 3A + \roo{B} + \gr{C} + \bl{D} + \ora{E} =  (A + \roo{B} + \gr{C} + \bl{D} + \ora{E}) + (A+\roo{B}) +  (A-\roo{B})\,.
\end{equation}
As the first two network multipliers contributed to the spectrum of $\gamma^{\mathcal{S}}_f$, they also contribute to the spectrum of $\gamma_f$ (see Theorem \ref{main1}, point 5). Therefore, we find that a third network multiplier is given by $\Lambda^3(A, \dots, \ora{E}) = A - \roo{B}$. Setting $m := \dim(V)$, we conclude that the eigenvalues of the $3m \times 3m$ matrix \eqref{gammafex1222} are given by those of $\Lambda^1(A, \dots, \ora{E})$ (one time), those of $\Lambda^2(A, \dots, \ora{E})$ (one time) and those of $\Lambda^3(A, \dots, \ora{E})$ (one time). Note that we may have also gotten this result by observing that we have a sequence of invariant spaces $\{x_1 = x_2 = x_3\} \subset \{x_2 = x_3\} \subset V^3$. However, we may now use the network multipliers to obtain information on the spectrum of admissible maps for other networks in $\mathfrak{C}_{\mathcal{N}}$. For example, as we may write
\begin{align}
\tr(\Gamma_f) &= 5A + \roo{B} + \gr{C} + \bl{D} + \ora{E} \\ \nonumber
&=  (A + \roo{B} + \gr{C} + \bl{D} + \ora{E}) + 2(A+\roo{B}) + 2(A-\roo{B}) \\ \nonumber
&= 1\cdot \Lambda^1(A, \dots, \ora{E}) + 2\cdot\Lambda^2(A, \dots, \ora{E})+2\cdot\Lambda^3(A, \dots, \ora{E}) \, ,
\end{align}
we can immediately conclude that the eigenvalues of the $5m \times 5m$ matrix \eqref{gammafex14212} are given by those of $\Lambda^1(A, \dots, \ora{E})$ (one time), those of $\Lambda^2(A, \dots, \ora{E})$ (two times) and those of $\Lambda^3(A, \dots, \ora{E})$ (two times). \hfill $\triangle$
\end{ex}
\noindent It should be clear at this point that the trace of a linear admissible map plays a key role in our analysis. The following proposition allows us to directly obtain this trace from the corresponding network graph. 
\begin{prop}\label{traceisfixedloops}
Given a (not necessarily complete) homogeneous coupled cell network $\mathcal{N} = (N, \mathcal{T})$ and a linear response function $f: \C^I \rightarrow \C$ with coefficients $(c_{\sigma})_{\sigma \in \mathcal{T}}$, the trace of the corresponding admissible map $\gamma_f$ is given by
\begin{align}
\tr(\gamma_f) 	&= \sum_{\sigma \in \mathcal{T}} c_{\sigma} \#\{ p \in N\, \mid \, \sigma(p) = p\}  \\ \nonumber
			&= \sum_{\sigma \in \mathcal{T}} c_{\sigma} [\text{``\hspace{0.03cm}number of self loops of colour $\sigma$''}] \, .
\end{align}
\end{prop}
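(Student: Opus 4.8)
The plan is to compute the matrix of $\gamma_f$ in the standard basis of $\C^N$ and simply read off its diagonal. Since the phase space of each node is $\C$ and $f$ is linear, we may write $f(w) = \sum_{\sigma \in \mathcal{T}} c_\sigma w_\sigma$ for $w = (w_\sigma)_{\sigma \in \mathcal{T}} \in \C^I$, and the defining relation $(\gamma_f)_p = f \circ \pi_p$ together with $(\pi_p(x))_\sigma = x_{\sigma(p)}$ gives, for every $p \in N$ and every $x = (x_q)_{q \in N} \in \C^N$,
\begin{equation}
(\gamma_f(x))_p = \sum_{\sigma \in \mathcal{T}} c_\sigma\, x_{\sigma(p)}\, .
\end{equation}
Writing $\gamma_f = \big( (\gamma_f)_{p,q} \big)_{p,q \in N}$ for the matrix of $\gamma_f$ with respect to this basis, the identity above says precisely that $(\gamma_f)_{p,q} = \sum_{\sigma \in \mathcal{T},\, \sigma(p) = q} c_\sigma$.

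First I would isolate the diagonal entries: taking $q = p$ yields $(\gamma_f)_{p,p} = \sum_{\sigma \in \mathcal{T},\, \sigma(p) = p} c_\sigma$. Summing over $p \in N$ and interchanging the two finite sums then gives
\begin{equation}
\tr(\gamma_f) = \sum_{p \in N} (\gamma_f)_{p,p} = \sum_{p \in N} \sum_{\substack{\sigma \in \mathcal{T} \\ \sigma(p) = p}} c_\sigma = \sum_{\sigma \in \mathcal{T}} c_\sigma\, \#\{ p \in N \mid \sigma(p) = p \}\, ,
\end{equation}
which is the first asserted equality.

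Finally I would identify $\#\{ p \in N \mid \sigma(p) = p \}$ with the number of self-loops of colour $\sigma$. Because $\mathcal{N}$ is homogeneous with asymmetric input, each node $p$ is the target of exactly one arrow of type $\sigma$, and the source of that arrow is by definition $\sigma(p)$; the arrow is a self-loop exactly when $\sigma(p) = p$. Hence the self-loops of colour $\sigma$ are in bijection with the fixed points of $\sigma$, which gives the second equality. I do not expect any genuine obstacle here: the argument is a direct unwinding of the definition of $\gamma_f$ in the scalar case. The only point deserving a word of care is the bookkeeping — two distinct arrow types may each contribute a self-loop at the same node, and both contributions must be counted, which is exactly why the sum over $\sigma \in \mathcal{T}$ survives in the final formula.
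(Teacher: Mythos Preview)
Your proof is correct and follows essentially the same approach as the paper's: both compute the diagonal entries of $\gamma_f$ in the standard basis of $\C^N$ and then swap the order of the two finite sums. The paper phrases this via the basis vectors $\delta_p$ and the identity $\tr(\gamma_f) = \sum_{p \in N} (\gamma_f(\delta_p))_p$, while you first write down the general matrix entry $(\gamma_f)_{p,q}$ and then specialise to $q = p$, but the underlying computation is identical.
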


\begin{proof}
Given a node $p \in N$, denote by $\delta_p \in \C^{N}$ the element given by $(\delta_p)_q = \delta_{p,q}$ for all $q \in N$. It follows that
\begin{align}
\tr(\gamma_f)	&= \sum_{p \in N} (\gamma_f(\delta_p))_p =  \sum_{p \in N} (f \circ \pi_p )(\delta_p)  =  \sum_{p \in N}\sum_{\sigma \in \mathcal{T}} c_{\sigma}(\pi_p (\delta_p))_{\sigma} \\ \nonumber
			&=  \sum_{p \in N}\sum_{\sigma \in \mathcal{T}} c_{\sigma}(\delta_p)_{\sigma(p)} =  \sum_{\sigma \in \mathcal{T}} c_{\sigma}\sum_{p \in N}(\delta_{p,\sigma(p)}) \\ \nonumber
			&=  \sum_{\sigma \in \mathcal{T}} c_{\sigma}\#\{ p \in N\, \mid \, \sigma(p) = p\} \, .
\end{align}
This proves the proposition.
\end{proof}

\begin{ex}\label{exam7}
\noindent We revisit the class of constructible networks $\mathfrak{C}_{\Sigma}$ from Example \ref{exam6}. Recall that the left side of Figure \ref{fig5} shows a network in $\mathfrak{C}_{\Sigma}$, which we will call ${\mathcal{M}}$. As before, we may try to express the trace of an admissible map $\gamma^{\mathcal{M}}_f$ for ${\mathcal{M}}$ as a combination of the network multipliers we found in Example \ref{exam6}. If we manage, then we can describe the spectrum of $\gamma^{\mathcal{M}}_f$ in terms of the spectrum of the network multipliers as before. (If we do not manage, then it proves that we have not yet found all the network multipliers of $\mathfrak{C}_{\Sigma}$.) Using the result of Proposition \ref{traceisfixedloops}, we see from Figure \ref{fig5} that 
\begin{align}
\tr(\gamma^{\mathcal{M}}_f) &= 5A + 3\roo{B} + \gr{C} + \bl{D} + \ora{E}  \\ \nonumber
&= (A + \roo{B} + \gr{C} + \bl{D} + \ora{E} ) + 3(A + \roo{B}) + (A - \roo{B}) \\ \nonumber
&= 1\cdot \Lambda^1(A, \dots ,\ora{E})  +3\cdot \Lambda^2(A, \dots ,\ora{E})+1\cdot\Lambda^3(A, \dots ,\ora{E})\, .
\end{align}
Therefore, the eigenvalues of $\gamma^{\mathcal{M}}_f$ are given exactly by those of the block combinations $A + \roo{B} + \gr{C} +  \bl{D} + \ora{E}$ (one time), $A + \roo{B}$ (three times) and $A - \roo{B}$ (one time). Note that we did not even have to write down the linear admissible map $\gamma^{\mathcal{M}}_f$ to get to this result. \\
One can see that $\gamma^{\mathcal{M}}_f$ is given explicitly by
\begin{equation}\label{gammafex142212}
\arraycolsep=1.4pt\def\arraystretch{1}
\gamma^{\mathcal{M}}_f = \begin{bmatrix}
A & \roo{B} & \gr{C} &  \bl{D} + \ora{E} & 0 \\
 \roo{B} & A& \gr{C} &  \bl{D} + \ora{E} & 0 \\
0 & 0 & A + \roo{B} + \gr{C} &  \bl{D} + \ora{E} & 0 \\
0 & 0 & \gr{C} &  A + \roo{B} + \bl{D}  + \ora{E} & 0 \\
0 & 0 & \gr{C} &  \ora{E} + \bl{D} &A + \roo{B}  
\end{bmatrix} \, .
\end{equation}
Of course, we may set $\pa{F}: = \bl{D} + \ora{E}$, so that we have proven that the eigenvalues of a block matrix of the form
\begin{equation}\label{gammafex1422}
\arraycolsep=1.4pt\def\arraystretch{1}
\tilde{\gamma}^{\mathcal{M}}_f = \begin{bmatrix}
A & \roo{B} & \gr{C} &  \pa{F} & 0 \\
 \roo{B} & A& \gr{C} &  \pa{F} & 0 \\
0 & 0 & A + \roo{B} + \gr{C} &  \pa{F} & 0 \\
0 & 0 & \gr{C} &  A + \roo{B} + \pa{F} & 0 \\
0 & 0 & \gr{C} & \pa{F} & A + \roo{B}  
\end{bmatrix} \, ,
\end{equation}
are given by those of the block combinations $A + \roo{B} + \gr{C} +  \pa{F}$ (one time), $A + \roo{B}$ (three times) and $A - \roo{B}$ (one time). \hfill $\triangle$
\end{ex}
\noindent Next, we state two more properties of network multipliers that may prove useful.
\begin{thr}
In addition to the properties listed in Theorem \ref{main1}, the network multipliers of $\mathfrak{C}_{\mathcal{N}}$ satisfy the following properties.
\begin{itemize}\label{main2}
\item[6] Let $\mathcal{F}$ denote the fundamental network of $\mathcal{N}$. As mentioned before, we have $\mathcal{F} \in \mathfrak{C}_{\mathcal{N}}$. Moreover, it holds that $m_l^{\mathcal{F}} \not= 0$ for all $l \in \{1, \dots, k\}$. In other words, every network multiplier of $\mathfrak{C}_{\mathcal{N}}$ contributes to the spectrum of $\Gamma_f$.
\item[7] Given linear response functions $f, g: V^{\Sigma} \rightarrow V$, there exists a unique linear response function $h$ such that $\Gamma_f \circ \Gamma_g = \Gamma_h$. It then also holds that $\gamma^{\mathcal{M}}_f \circ \gamma^{\mathcal{M}}_g = \gamma^{\mathcal{M}}_h$, with $\gamma^{\mathcal{M}}_{\bullet}$ an admissible map for any network $\mathcal{M} \in \mathfrak{C}_{\mathcal{N}}$. Moreover, if we denote by $\mathcal{C}_f$ the coefficients of $f$ (with similar notation for $g$ and $h$), then we have $\Lambda^l(\mathcal{C}_f)\circ \Lambda^l(\mathcal{C}_g) = \Lambda^l(\mathcal{C}_h)$ for all $l \in \{1, \dots, k\}$.
\end{itemize}
\end{thr}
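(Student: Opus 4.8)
The plan is to establish properties 6 and 7 by leveraging the representation-theoretic picture promised in the excerpt, namely that the fundamental network's admissible maps $\Gamma_f$ are exactly the maps commuting with the monoid representation $(A_\sigma)_{\sigma \in \Sigma}$ (Lemma \ref{symmetryyeah}), and that the network multipliers $\Lambda^l$ arise as the action on the isotypic components of $V^\Sigma$ under the decomposition of this representation into irreducibles. Property 7 is essentially a statement that this isotypic decomposition turns composition of admissible maps into block-wise composition; property 6 says no isotypic component is missing from $V^\Sigma$ itself.

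For property 7, I would first invoke Corollary \ref{identityplusmulti} (or its fundamental-network version, which is cleaner since $\mathcal{F}$ is complete) to get existence of $h$ with $\Gamma_f \circ \Gamma_g = \Gamma_h$; uniqueness of $h$ follows because, by Lemma \ref{symmetryyeah}, $h = (\Gamma_h)_e$ is determined by $\Gamma_h$. The identity $\gamma^{\mathcal{M}}_f \circ \gamma^{\mathcal{M}}_g = \gamma^{\mathcal{M}}_h$ for any $\mathcal{M} \in \mathfrak{C}_{\mathcal{N}}$ then follows from the functoriality of the assignment $f \mapsto \gamma^{\mathcal{M}}_f$: since every $\mathcal{M}$ is built from quotients of $\mathcal{F}$ by graph fibrations, and admissible maps intertwine with the projections induced by fibrations (the conjugation relations $\theta_p \circ \gamma_f^p = \Gamma_f \circ \theta_p$ of Lemma \ref{includedyeah}, extended to general quotients), composition is preserved under restriction to robust synchrony spaces. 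Finally, for $\Lambda^l(\mathcal{C}_f) \circ \Lambda^l(\mathcal{C}_g) = \Lambda^l(\mathcal{C}_h)$: in the representation-theory construction, $\Lambda^l$ is (up to the block reshuffling in \eqref{repr1}) the map induced by $\Gamma_f$ on the multiplicity space of the $l$-th irreducible inside $V^\Sigma$; since $\Gamma_{\bullet}$ is an algebra homomorphism from response functions to $\End(V^\Sigma)$ and the isotypic decomposition is $\Gamma$-invariant, the induced maps on each isotypic block also compose, giving the claim.

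For property 6, the point is that each irreducible representation of $\Sigma$ that appears anywhere in the theory — i.e. that gives rise to one of the multipliers $\Lambda^1, \dots, \Lambda^k$ — already appears in the regular-type representation $V^\Sigma = \bigoplus_{\sigma \in \Sigma} V$ carried by the fundamental network. I would argue this by showing that the $\Lambda^l$ are, by their very construction, indexed precisely by the irreducible constituents of $V^\Sigma$ (equivalently, of the monoid algebra acting on $\C^\Sigma$, tensored with $V$), so that $m_l^{\mathcal{F}}$ is exactly the multiplicity of the $l$-th irreducible in $V^\Sigma$, which is nonzero for every $l$ by definition of the index set $\{1,\dots,k\}$. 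Concretely, one can also note that $V^{\mathcal{N}_p}$ embeds in $V^\Sigma$ as $S_p$ for every node $p$ (Lemma \ref{includedyeah}), and every constructible $\mathcal{M}$ has its spectrum governed by these $S_p$'s, so no multiplier can be "created" outside what $V^\Sigma$ sees; combined with point 5's monotonicity ($m^{\mathcal{P}}_l \le m^{\mathcal{M}}_l$ for quotients) this forces $m_l^{\mathcal{F}} \ge m_l^{\mathcal{M}} > 0$ for any $\mathcal{M}$ witnessing multiplier $l$.

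The main obstacle I anticipate is bookkeeping the block-reshuffling map in \eqref{repr1}: the statement "$\Lambda^l(\mathcal{C}_f)$" is literally the antidiagonally-arranged block matrix, and one must check that this cosmetic rearrangement is compatible with composition, i.e. that it is conjugation by a fixed permutation matrix independent of $f$. Once that is pinned down, property 7's multiplier identity is immediate from the block-diagonal form of $\Gamma_f$ in a basis adapted to the isotypic decomposition. A secondary subtlety is making precise the claim that the $\Lambda^l$ "are indexed by the irreducibles appearing in $V^\Sigma$" without having reproduced the full construction of Section 4 (where the $\Lambda^l$ are defined); I would handle this by citing the relevant construction forward and phrasing property 6 as a direct corollary of that construction together with the completeness of $\mathcal{F}$.
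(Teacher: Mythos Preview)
Your plan is essentially the paper's own argument: for point 7 the paper invokes exactly the machinery you describe (existence/uniqueness of $h$ from Lemma~\ref{symmetryyeah}, the composition identity for constructible $\mathcal{M}$ via the conjugations $\theta_p$ and $\psi_p$, and multiplicativity of $\Lambda^l$ from the structure of $\End(\C^\Sigma)$); for point 6 the paper simply observes that by construction $m_l^{\mathcal{F}} = \dim(W_l) > 0$ for the $l$-th component of the decomposition of $\C^\Sigma$, which is your primary argument as well.

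Two corrections, however. First, throughout you write ``irreducible'' where you must write ``indecomposable''. For monoid representations Maschke's theorem fails, so the decomposition of $\C^\Sigma$ is into indecomposables via Krull--Schmidt, not into irreducibles; the endomorphism algebra of an indecomposable is local (scalar plus nilpotent) rather than a division ring, and the multiplicativity of $\Lambda^l$ goes through the two-sided ideal $\mathcal{J}$ of ``nilpotent-diagonal'' endomorphisms, not through a clean isotypic block structure as for groups. Your sketch ``the isotypic decomposition is $\Gamma$-invariant, so induced maps compose'' hides precisely this subtlety; the paper's Lemma~\ref{jacobideal} is the missing ingredient.

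Second, your alternative argument for point 6 via monotonicity does not work as written: point 5 gives $m_l^{\mathcal{P}} \le m_l^{\mathcal{M}}$ only when $\mathcal{P}$ is a \emph{quotient} of $\mathcal{M}$, but an arbitrary constructible $\mathcal{M}$ witnessing $m_l^{\mathcal{M}}>0$ need not be a quotient of $\mathcal{F}$ (only its input networks are), so you cannot conclude $m_l^{\mathcal{F}} \ge m_l^{\mathcal{M}}$. Stick with your primary argument. Finally, the ``antidiagonal block reshuffling'' obstacle is a misreading of the display \eqref{repr1}: the reflected dots merely indicate that the matrix is full, with $(i,j)$-entry $\Lambda^l_{i,j}$; there is no reshuffling to track.
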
 
\noindent Point 6 allows us to verify that we have found all the network multipliers of $\mathfrak{C}_{\mathcal{N}}$. In particular, it follows that in Example \ref{exam6} we have found all of them. Point 7 may be a powerful tool in further analysing linear admissible maps. This property is what justifies the name `network multiplier'.

\begin{ex}\label{exam8}
We return to the setup of examples \ref{exam6} and \ref{exam7}. Let us denote the coefficients of a response function $f: V^{\Sigma} \rightarrow V$ by $(A,\roo{B},\gr{C},\bl{D},\ora{E})$ and those of $g: V^{\Sigma} \rightarrow V$ by $(A',\roo{B'},\gr{C'}, \bl{D'},\ora{E'})$. We write  $(A'',\roo{B''},\gr{C''},\bl{D''},\ora{E''})$ for the coefficients of $h: V^{\Sigma} \rightarrow V$, defined by the equation  $\Gamma_f\circ\Gamma_g = \Gamma_h$. One can show that the coefficients of $h$ are given explicitly by
\begin{align}
&A'' = AA' + \roo{B}\roo{B'} \, \quad \roo{B''} = A\roo{B'}  + \roo{B}A' \\ \nonumber
&\gr{C''}  = (A + \roo{B} + \gr{C} +  \bl{D} + \ora{E})\gr{C'} + \gr{C}(A' + \roo{B'}) \\ \nonumber
&\bl{D''}  = (A + \roo{B} + \gr{C} +  \bl{D} + \ora{E})\bl{D'} + \bl{D}A' + \ora{E}\roo{B'} \\ \nonumber
&\ora{E''}  = (A + \roo{B} + \gr{C} +  \bl{D} + \ora{E})\ora{E'} + \bl{D}\roo{B'}  + \ora{E}A' \, .
\end{align}
Using these expressions for $A''$ up to $\ora{E''}$, one verifies that indeed 
\begin{equation}
\Lambda^i(A, \dots ,\ora{E})\Lambda^i(A', \dots ,\ora{E'}) = \Lambda^i(A'', \dots ,\ora{E''})\, ,
\end{equation}
for all $i \in \{1,2,3\}$. \hfill $\triangle$
\end{ex}

\begin{ex}\label{exam9}
We consider the network $\mathcal{N} = (N, \Sigma)$ with set of nodes $N=\{1,2,3\}$ and monoid $\Sigma = \{[1,2,3], [2,2,3], [1,1,1], [2,2,2], [3,3,3]\}$. The corresponding linear admissible maps are given by 
\begin{equation}\label{gammafex14262}
\arraycolsep=1.4pt\def\arraystretch{1}
{\gamma}_f = \begin{bmatrix}
A + \gr{C} & \roo{B} + \bl{D} & \ora{E} \\
 \gr{C} &A +  \roo{B}+ \bl{D} & \ora{E} \\
\gr{C} & \bl{D} & A + \roo{B} + \ora{E} 
\end{bmatrix} \, .
\end{equation}
Again, we may use the existence of a  robust synchrony space to obtain some of the network multipliers. It can be seen that the synchrony space $S = \{x_1 = x_2\}$ is indeed robust. Restricting $\gamma_f$ to $S$ then yields the corresponding quotient network
\begin{equation}\label{gammafex14262w}
\arraycolsep=1.4pt\def\arraystretch{1}
{\gamma}^{\mathcal{S}}_f:= \gamma_f|_S = \begin{bmatrix}
A + \roo{B} + \gr{C}+ \bl{D}   & \ora{E} \\
\gr{C} + \bl{D} & A + \roo{B} + \ora{E} 
\end{bmatrix} \, .
\end{equation}
As a quotient of a constructible network is again constructible, we know that the trace of ${\gamma}^{\mathcal{S}}_f$ may be uniquely written as the sum of (the trace of) two network multipliers for $\mathfrak{C}_{\Sigma}$. One of these network multipliers is given by $\Lambda^1(A, \dots ,\ora{E}) = A + \roo{B} + \gr{C} + \bl{D}  + \ora{E}$, and by looking at the trace of ${\gamma}^{\mathcal{S}}_f$ we see that another one is given by 
\begin{align}
\Lambda^2(A, \dots ,\ora{E}) &= (A + \roo{B} + \gr{C}+ \bl{D})+ (A + \roo{B} + \ora{E} )  \\ \nonumber
&- (A + \roo{B} + \gr{C} + \bl{D}  + \ora{E}) \\ \nonumber
&= A + \roo{B}\, .
\end{align}
Now let us return to equation \eqref{gammafex14262}. As before, we may write

\begin{equation}
\tr(\gamma_f) = \Lambda^1(A, \dots ,\ora{E}) + \Lambda^2(A, \dots ,\ora{E}) + \Lambda^i(A, \dots ,\ora{E})\, ,
\end{equation}
for some $i \in \{1, \dots, k\}$ (not excluding $i=1$ or $i=2$). We find a new network multiplier $\Lambda^3$, given by
\begin{equation}
\Lambda^3(A, \dots ,\ora{E}) = A\, .
\end{equation}
Therefore, we conclude that the eigenvalues of a block matrix of the form \eqref{gammafex14262w} are given by those of $A$, $A + \roo{B}$ and $A + \roo{B} + \gr{C} + \bl{D}  + \ora{E}$. Next, we want to verify that we have found all the network multipliers. We may determine the trace of an admissible map $\Gamma_f$ for the fundamental network of $\mathcal{N}$, by simply counting solutions $\tau \in \Sigma$ to the equation $\sigma \tau = \tau$ for each $\sigma \in \Sigma$. See Proposition \ref{traceisfixedloops}, (where we may replace $p$ by $\tau$). We find
\begin{align}
\tr(\Gamma_f) &= 5A + 3\roo{B} + \gr{C} + \bl{D}  + \ora{E} \\ \nonumber
&= \Lambda^1(A, \dots ,\ora{E}) + 2\cdot \Lambda^2(A, \dots ,\ora{E}) +  2\cdot \Lambda^3(A, \dots ,\ora{E}) \, .
\end{align}
We conclude that the eigenvalues of $\Gamma_f$ are given by those of $\Lambda^1$ (one time), $\Lambda^2$ (two times) and $\Lambda^3$ (two times). Moreover, we see that $\Lambda^1$, $\Lambda^2$ and $\Lambda^3$ are all the network multipliers of $\mathfrak{C}_{\Sigma}$. By the relatively easy form of the $\Lambda^i$, we  may now describe the eigenvalues of an admissible map for any network $\mathcal{M} \in \mathfrak{C}_{\Sigma}$ by just looking at its graph. More precisely, these eigenvalues are given by those of the $\Lambda^i$, ($m^{\mathcal{M}}_i$ times) for $i \in \{1,2,3\}$, where we may give an exact description of the $m^{\mathcal{M}}_i$. Namely, we have that $m^{\mathcal{M}}_1$ equals the number of green self-loops in the graph of $\mathcal{M}$, that $d^{\mathcal{M}}_2$ equals the number of red self loops minus the number of green self-loops, and that $d^{\mathcal{M}}_3$ is equal to the total number of cells in $\mathcal{M}$ minus $d^{\mathcal{M}}_1+d^{\mathcal{M}}_2$. We will use this example to study larger networks in Example \ref{exam11}. $\hfill \triangle$
\end{ex}

\begin{ex}\label{exam10}
Given a node $p$ in a network $\mathcal{N} = (N, \Sigma)$, let us denote by $\theta_p := [p,p, \dots p]$ the input map that sends every node to $p$. Suppose furthermore that $\tau$ is an input map satisfying $\tau^M = \theta_q$ for some $M \in \N$ and $q \in N$, though let us assume that $\tau \not= \theta_q$. We let $\Sigma$ be the smallest monoid of maps from $N$ to $N$ containing $\tau$ and all maps $\theta_p$ for $p \in N$, and denote by $\mathcal{F} = (\Sigma, \Sigma)$ the corresponding (complete) fundamental network. \\
It can be seen that $\Sigma$ is given explicitly by $\Id_N$ together with all powers of $\tau$ and all maps $\theta_p$. Because of this, the only invertible element of $\Sigma$ is $\Id_N$. We claim that a robust synchrony space of $\C^{\Sigma}$ is therefore given by 
\begin{equation}
S:= \{X_{\kappa} = X_{\tau} \, \mid \, \forall \kappa, \tau \not= \Id_N\}\, .
\end{equation}
Indeed, for $\Id_N$ there is nothing to check, and multiplication by a non-invertible input map from the left produces only non-invertible input maps. Let us denote the two nodes of the corresponding quotient network of $S$ by $[\Id_N]$ and $[\tau]$ (representing the equivalence class containing $\Id_N$ and $\tau$, respectively). Given a linear response function $f: \C^{\Sigma} \rightarrow \C$ we will write $\gamma_f$, $\Gamma_f$ and $\gamma^{\mathcal{S}}_f$ for the corresponding admissible maps for the original network (with set of nodes $N$), the fundamental network and the two-cell network corresponding to the synchrony space $S$, respectively. Let us also write $c = (c_{\sigma})_{\sigma \in \Sigma} \in \C^{\Sigma}$ for the coefficients of $f$. Using Proposition \ref{traceisfixedloops}, we may find the trace of $\gamma_f^{\mathcal{S}}$ by counting solutions to the equation $\sigma([\kappa]) = [\kappa]$ for $[\kappa] \in \{[\Id_p], [\tau] \}$ and for each $\sigma \in \Sigma$. The equation $\Id([\kappa]) = [\kappa]$ holds true for every $[\kappa] \in \{[\Id_N], [\tau] \}$, whereas $\sigma([\kappa]) = [\kappa]$ only holds for $[\kappa] = [\tau]$ if $\sigma$ is non-invertible (as composition from the left by a non-invertible input map always yields a non-invertible input map). We conclude that
\begin{equation}\label{trac384765}
\tr(\gamma_f^{\mathcal{S}}) = c_{\Id} + \sum_{\sigma \in \Sigma} c_{\sigma}\, .
\end{equation}
We furthermore know that 
\begin{equation}
\tr(\gamma_f^{\mathcal{S}}) = \Lambda^1(c) + \Lambda^i(c)\,
\end{equation}
for some $i \in \{1, \dots, k\}$ (not excluding $i=1$), and with 
\begin{equation}
\Lambda^1(c) = \sum_{\sigma \in \Sigma} c_{\sigma}\, .
\end{equation}
Hence, we conclude from equation \eqref{trac384765} that $\Lambda^2(c) = c_{\Id}$. Therefore, we have already found two network multipliers of $\mathfrak{C}_{\mathcal{F}}$. Returning to the original network, we see that for every $K \in \N$ we have
\begin{equation}
\tau^K(q) = \tau^{K}(\theta_q(q)) = \tau^{K}(\tau^M(q)) = \tau^{M}(\tau^K(q)) = \theta_q(\tau^K(q)) = q\,.
\end{equation}
Therefore, there exists at least one node $p \in N$ such that $\tau^K(p) = p$. Moreover, if $p \in N$ satisfies $\tau^K(p) = p$, then
\begin{equation}
p = \tau^K(p) = \tau^{2K}(p) = \dots = \tau^{MK}(p) = \theta_q(\tau^{M(K-1)}(p)) = q \, .
\end{equation}
We conclude that for every $K \in \N$ there is exactly one solution $p \in N$ to the equation $\tau^K(p) = p$. Likewise, for a given $r \in N$, the equation $\theta_r(p) = p$ has the unique solution $p=r$. Finally, $\Id_N(p) = p$ of course has $\#N$ solutions. From this we see that 
\begin{equation}\label{trac3q84765}
\tr(\gamma_f) = (\#N-1)c_{\Id} + \sum_{\sigma \in \Sigma} c_{\sigma}  = (\#N-1)\Lambda^2(c) + \Lambda^1(c)\, .
\end{equation}
We conclude that the eigenvalues of $\gamma_f$ are given exactly by $\Lambda^1(c)$ (one time) and $\Lambda^2(c)$ ($(\#N-1)$ times). This result still holds if we allow $f$ to be a map from $V^{\Sigma}$ to $V$ with each $c_{\sigma}$ a linear map from $V$ to $V$. Moreover, the conclusion holds regardless of whether or not $\mathfrak{C}_{\mathcal{F}}$ has any other network multipliers (perhaps even with $n_l >> 1$), as an expression of $\tr(\gamma_f)$ in multiples of the traces of the multipliers is unique. However, using the same technique as with $\gamma_f$ one easily verifies that
\begin{equation}\label{trac3q84765}
\tr(\Gamma_f) =  \Lambda^1(c) + (\#\Sigma-1)\Lambda^2(c)\, ,
\end{equation}
so that $\Lambda^1$ and $\Lambda^2$ are in fact the only network multipliers of $\mathfrak{C}_{\mathcal{F}}$. This implies that the eigenvalues of any admissible map for a constructible network are given by those of $\Lambda^1$ and possibly $\Lambda^2$. If $\Lambda^2$ is not involved then every arrow in the network is a self-loop, meaning that the network is the disjoint union of all single cell networks. \\
As an example, it follows that the eigenvalues of the block matrix
\begin{equation}\label{gammafex142we2}
\arraycolsep=1.4pt\def\arraystretch{1}
{\gamma}_f = \begin{bmatrix}
A + \ora{E} & \roo{B} + \pa{F} & \gr{C} + \geel{G} &  \bl{D} \\
\ora{E} & A+ \pa{F}& \roo{B}+\geel{G}& \gr{C} + \bl{D}   \\
\ora{E} & \pa{F} & A + \geel{G}  &\roo{B}+ \gr{C} + \bl{D}   \\
\ora{E} & \pa{F} & \geel{G} &  A + \roo{B} + \gr{C} +\bl{D} 
\end{bmatrix} \, ,
\end{equation}
corresponding to $\tau = [2,3,4,4]$, are given by those of the matrices $A + \roo{B} + \gr{C} + \bl{D} + \ora{E} + \pa{F} + \geel{G}$ and $A$.
$\hfill \triangle$
\end{ex}

\begin{ex}\label{exam11}
Just as in the previous example, suppose $\Sigma$ is generated by the elements $\theta_p$ for $p \in N$ and another element $\tau$ satisfying $\tau^M = \tau^{M+1}$ for some $M \in \N$. We  furthermore assume $\tau$ to be non-invertible. It follows that $\tau^M$ is the projection onto a (strict) subset of nodes   $\{q_1, \dots q_s\} \subset N$. We have already covered the case $s=1$ in Example \ref{exam10}, so assume $1 < s < \#N$. It follows that there is a partition of the nodes $N = Q_1 \sqcup \dots \sqcup Q_s$ with $q_i \in Q_i$ and with $\tau^M(p) = q_i$ for all $p \in Q_i$, for all $i \in \{1, \dots, s\}$. We may assume without loss of generality that $Q_1$ contains more than one element. As a result, $Q_1$ contains at least one element $q$ that is not in the image of $\tau$. We claim that the partition of the set of nodes into three classes given by 
\begin{equation}
N = (\{q\}) \bigsqcup (Q_1 \setminus \{q\}) \bigsqcup (Q_2 \sqcup \dots \sqcup Q_s) := N_1 \sqcup N_2 \sqcup N_3
\end{equation}
is balanced. Indeed, one readily verifies that $\tau$ respects this partition, from which it follows that all powers of $\tau$ do. Moreover, any input map of the form $\theta_p$ for $p \in N$ in fact respects any synchrony space. In the corresponding quotient network, the identity map acts as the identity on the three nodes, all (positive) powers of $\tau$ act as the map $(N_2, N_2, N_3)$ and every map $\theta_p$ becomes a map $\theta_{N_i}$ for some $i \in \{1,2,3\}$. In other words, the quotient network is the same as the network discussed in Example \ref{exam9} (possibly after identifying certain input maps). Proceeding the same way as in Example \ref{exam9}, we obtain the three network multipliers 
\begin{align}
\Lambda^1(c) = \sum_{\sigma \in \Sigma} c_{\sigma}, \, \Lambda^2(c) = \sum_{i = 0}^M c_{\tau^i} \text{ and } \Lambda^3(c) = c_{\Id} \,,
\end{align}
where $M \in \N$ is assumed minimal subject to $\tau^M = \tau^{M+1}$. It remains to look at the trace of an admissible map $\gamma_f$ for the original network, in the hope that it may be expressed using only the three network multipliers we have found. The equation $\theta_p(r) = r$ for $r \in N$ has exactly one solution for every node $p \in N$. For any positive power of $\tau$, we find precisely the nodes $q_1$ to $q_s$, and the identity map of course fixes all nodes. We conclude that
\begin{align}\label{trac3q8q4765}
\tr(\gamma_f) &= \sum_{\sigma \in \Sigma} c_{\sigma} + (s-1)\sum_{i=0}^M c_{\tau^i}+ (\#N-s)c_{\Id} \\ \nonumber
&=  \Lambda^1(c) + (s-1)\Lambda^2(c) + (\#N-s)\Lambda^3(c)\, .
\end{align}
Hence, the eigenvalues of $\gamma_f$ are given by those of $\Lambda^1(c)$ (one time), those of $\Lambda^2(c)$ ($s-1$ times) and  those of $\Lambda^3(c)$ ($\#N-s$ times).\\
As an example, we have that the eigenvalues of the block matrix
\begin{equation}\label{gammafex18942we2}
\arraycolsep=1.4pt\def\arraystretch{1}
{\gamma}_f = \begin{bmatrix}
A + \gr{C} &  \roo{B}+ \bl{D} &  \ora{E} & \pa{F} & \geel{G} & \gre{H} \\
\gr{C} & A +\roo{B}+ \bl{D} &  \ora{E} & \pa{F} & \geel{G} & \gre{H} \\
\gr{C} &  \roo{B} + \bl{D} & A + \ora{E} & \pa{F} & \geel{G} & \gre{H} \\
\gr{C} &  \bl{D} &  \ora{E} & A +\pa{F} & \roo{B} + \geel{G} & \gre{H} \\
\gr{C} &  \bl{D} &  \ora{E} & \pa{F} & A + \roo{B} + \geel{G} & \gre{H} \\
\gr{C} &  \bl{D} &  \ora{E} & \pa{F} & \geel{G} &A +\roo{B}+ \gre{H} \\
\end{bmatrix} \, ,
\end{equation}
corresponding to $\tau = [2,2,2,5,5,6]$, are given by those of the matrices $A + \roo{B} + \gr{C} + \bl{D} + \ora{E} + \pa{F} + \geel{G} + \gre{H}$ (one time), $A + \roo{B}$ (two times) and $A$ (three times). $\hfill \triangle$
\end{ex}
\noindent Up to this point, we have only encountered classes of constructible networks with $n_l = 1$ for all $l \in \{1, \dots, k\}$. That is, we have only seen network multipliers of size one. The following example, studied in Section 3 of \cite{Schwenker},  shows that network multipliers can come as matrices of size greater than one as well.
\begin{ex}\label{exam12}
We study the fundamental network $\mathcal{F} = (\Sigma, \Sigma)$ with linear admissible maps given by 
\begin{align}\label{gammafex18a942we2}
&{\Gamma}_f = \\ \nonumber
&\begin{pmatrix}
A  &  \roo{B} & \gr{C} & \bl{D} &  \ora{E} & \pa{F} & \geel{G} & \gre{H} \\
0  & \hspace{-0.2cm}A+ \ora{E} \hspace{-0.2cm}& 0 & \gr{C} & 0 & \roo{B} + \pa{F} &  \geel{G} & \bl{D} + \gre{H} \\
0  & 0  & \hspace{-0.2cm}A + \bl{D}\hspace{-0.2cm} & 0 & \roo{B} & \pa{F} & \gr{C} + \geel{G}&   \ora{E} + \gre{H} \\
0  & \roo{B} & 0  & \hspace{-0.2cm}A + \bl{D}\hspace{-0.2cm} & 0  & \pa{F} & \gr{C} + \geel{G}&   \ora{E} + \gre{H} \\
0 & 0 & \gr{C}  & 0 & \hspace{-0.2cm}A+ \ora{E}\hspace{-0.2cm} &  \roo{B} + \pa{F} & \geel{G} & \bl{D} + \gre{H} \\
0 & 0 & 0 & 0 & 0 &  \hspace{-0.2cm}A+  \roo{B} + \ora{E}  + \pa{F}\hspace{-0.2cm} & \geel{G} & \gr{C} + \bl{D} + \gre{H} \\
0 & 0 & 0 & 0 & 0 &   \pa{F} & \hspace{-0.3cm}A + \gr{C} + \bl{D}+ \geel{G}\hspace{-0.3cm} &    \roo{B} + \ora{E} + \gre{H} \\
0 & 0 & 0 & 0 & 0 &   \roo{B} +\pa{F} &  \gr{C} + \geel{G} &  \hspace{-0.3cm}  A + \bl{D} + \ora{E} + \gre{H}  \\
\end{pmatrix} \, . 
\end{align}
It can be shown that the network multipliers of $\mathfrak{C}_{\Sigma}$ are given by
\begin{align}
\Lambda^1(A, \dots, \gre{H}) &= A + \roo{B} + \dots + \gre{H} \\ \nonumber
\Lambda^2(A, \dots, \gre{H}) &= A \\ \nonumber
\Lambda^3(A, \dots, \gre{H}) &= 
\begin{pmatrix}
A  + \bl{D} & \roo{B}  \\
\gr{C} & A  +  \ora{E} \\
\end{pmatrix} \, . 
\end{align}
This result may be obtained by going through the analysis of the following sections (using the representation theory of $\Sigma$). Alternatively, we will describe more techniques for obtaining network multipliers in a follow-up article. These will allow us to simply read off the network multipliers of the matrix in this example. By looking at the trace of \eqref{gammafex18a942we2}, we conclude that the eigenvalues of $\Gamma_f$ are given by those of the block combinations $\Lambda^1$ and $\Lambda^2$ (both one time), together with those of the two by two block matrix $\Lambda^3$ (three times). Moreover, for any constructible network $\mathcal{N} = (N, \Sigma) \in \mathfrak{C}_{\Sigma}$, the eigenvalues of an admissible map are given by those of $\Lambda^1$ ($m^{\mathcal{N}}_1$ times), $\Lambda^2$ ($m^{\mathcal{N}}_2$ times), and $\Lambda^3$ ($m^{\mathcal{N}}_3$ times). Here, $m^{\mathcal{N}}_1$ through $m^{\mathcal{N}}_3$ may be described by
\begin{align}
m^{\mathcal{N}}_1 &= \#\{\text{self-loops in $\mathcal{N}$ of the arrow type corresponding to $\gre{H}$}\}\\ \nonumber
m^{\mathcal{N}}_3 &= \#\{\text{self-loops in $\mathcal{N}$ of the arrow type corresponding to $\bl{D}$}\} - m^{\mathcal{N}}_1\\ \nonumber
m^{\mathcal{N}}_2 &= \#N - m^{\mathcal{N}}_1 - 2m^{\mathcal{N}}_3\, ,
\end{align} 
among other ways.  $\hfill \triangle$ 
\end{ex}

\begin{remk}
The examples considered in this section demonstrate the use of theorems \ref{main1} and  \ref{main2} in studying (linear) control theory of network systems. More precisely, it follows that a linear admissible map $\gamma_f^{\mathcal{N}}$ for a network $\mathcal{N}$ is stable, if and only if the (often significantly smaller) network multipliers involved in describing the trace of $\gamma_f^{\mathcal{N}}$ are stable. Moreover, we see that the eigenvalues of $\gamma_f^{\mathcal{N}}$ depend on the coefficients of $f$ only through their dependence on the coefficients of the network multipliers. As these latter coefficients often span a strict subspace of the space of all coefficients, we may interpret Theorem \ref{main1} as a result on dimensional reduction as well. For example, even though the total space of linear response functions for the network of Example \ref{exam12} has dimension $8\dim(V)^2$, the coefficients of the network multipliers only span a space of dimension $6\dim(V)^2$. It also follows that the coefficients of the network multipliers are a more natural choice of variables than the coefficients of $f$ when considering stability. For instance, in Example \ref{exam6} we found the network multipliers $\Lambda^2(A, \dots, \ora{E}) = A + \roo{B}$  and $\Lambda^3(A, \dots, \ora{E}) = A - \roo{B}$. This means that a change in the block $\roo{B}$ might very well make one multiplier stable, while making the other unstable. A more natural choice (if available) might therefore be to change the blocks $A$ and $\roo{B}$ simultaneously. Likewise, if $\Lambda^1(A, \dots, \ora{E}) = A + \dots + \ora{E}$ is stable, even though the whole network map is not, then a change in $A$ or $\roo{B}$ is necessary to make the entire system stable. In other words, changing only the blocks $\gr{C}$, $\bl{D}$ and $\ora{E}$ cannot make the system stable in this case. $\hfill \triangle$ 
\end{remk}

\noindent In the next sections we present the proof of theorems \ref{main1} and \ref{main2}. It will turn out that network multipliers are a consequence of the symmetries of the fundamental network, see Lemma \ref{symmetryyeah}. Recall that these symmetries do not necessarily correspond to groups, but rather to monoids. For this reason, we will first describe the basics of monoid representation theory.


\section{Representation Theory of Monoids} \label{Representation Theory of Monoids}
In this section we briefly introduce the representation theory of monoids. Most results will be presented without proof, as these can be found in for example \cite{RinkSanders3}. One minor difference is that the results in \cite{RinkSanders3} are presented over the real numbers, instead of the complex ones. However, one can copy most results almost verbatim. The only difference is that indecomposable representations over the real numbers can be classified into three types (real, complex or quaternionic), whereas only one type exists for indecomposable representations over the complex numbers (arguably best called complex type). We begin with the basic definitions.

\begin{defi}
A \textit{representation} of a monoid $\Sigma$ on a vector space $X$ is a set of linear maps $(A_{\sigma})_{\sigma \in \Sigma}$ from $X$ to itself such that 
\begin{itemize}
\item $A_e = \Id_X$ for $e$ the unit of $\Sigma$,
\item $A_{\sigma} \circ A_{\tau} = A_{\sigma \circ \tau}$ for all $\sigma, \tau \in \Sigma$.
\end{itemize}
In this article, we will furthermore only consider representations for which $X$ is a finite dimensional vector space over $\C$. We denote a representation of $\Sigma$ by $(X, (A_{\sigma})_{\sigma \in \Sigma})$, or just $X$ when the maps $(A_{\sigma})_{\sigma \in \Sigma}$ are clear from context. \\

\noindent Given two representations of $\Sigma$, $(X, (A_{\sigma})_{\sigma \in \Sigma})$ and $(Y, (A'_{\sigma})_{\sigma \in \Sigma})$, a \textit{homomorphism} from $X$ to $Y$ is a linear map $B: X\rightarrow Y$ such that
\begin{equation}
B \circ A_{\sigma} = A'_{\sigma} \circ B \text{ for all } \sigma \in \Sigma \, . 
\end{equation}
We denote the space of all homomorphisms between $X$ and $Y$ by $\Hom(X,Y)$, and write $\End(X)$ for $\Hom(X,X)$. The representations $X$ and $Y$ are said to be \textit{isomorphic} if there exists an invertible $F \in \Hom(X, Y)$. We then write $X \cong Y$. It can be shown that in that case, $F^{-1} \in \Hom(Y,X)$. In other words, the inverse of a homomorphism, if it exists, is a homomorphism as well. Likewise, for $\Sigma$-representations $X$, $Y$ and $Z$ and $G \in \Hom(X,Y)$, $H \in \Hom(Y,Z)$ we have $H \circ G \in \Hom(X,Z)$.\\

\noindent Given a representation $(X, (A_{\sigma})_{\sigma \in \Sigma})$, a linear subspace $U \subset X$ is called $\Sigma$-\textit{invariant} if we have $A_{\sigma}(U) \subset U$ for all $\sigma \in \Sigma$. In that case, $U$ naturally defines a representation of $\Sigma$ given by $(U, (A_{\sigma}|_{U})_{\sigma \in \Sigma})$, and we say that $U$ is a \textit{sub-representation} of $X$. The inclusion $i: U \rightarrow X$ is then readily seen to be a homomorphism. As opposed to compact group representations, an invariant space $U \subset X$ may not always have an invariant complementary space. That is, there may not exist an invariant space $W \subset X$ such that 
\begin{equation}\label{decompositiw12}
X = U \oplus W \, .
\end{equation}
If such a $W$ does exist then we will call $U$ a \textit{complementable} invariant subspace of $X$. In can be seen that the projection from $X$ onto $U$ with respect to the decomposition \eqref{decompositiw12} is then a homomorphism. \\

\noindent Examples of invariant subspaces are given by the image and kernel of a homomorphism and by the span of the eigenvectors of some eigenvalues of $F \in \End(X)$. None of these are necessarily complementable though. Examples of complementable subspaces are given by the generalised kernel of $F \in \End(X)$, and more generally by the span of the generalised eigenvectors of some eigenvalues of $F$. In other words, by the kernels of $F^n$ and $(F-\la_1\Id_X)^n\cdot (F-\la_2\Id_X)^n \dots (F-\la_k\Id_X)^n$ for $n$ the dimension of $X$ and $\la_1, \dots, \la_k \in \C$. Or equivalently, by the spaces corresponding to the Jordan blocks of $F$ for these eigenvalues. Invariant complements are then given by the span of the other generalised eigenvectors. The observation that not all linear subspaces are invariant or complementable may be seen as an explanation for the fact that symmetry often forces unusual Jordan normal forms, as the (generalised) eigenspaces are restricted by the symmetry.  $\hfill \triangle$
\end{defi}
\noindent Equation \eqref{decompositiw12} may be seen as not just an equality of vector spaces, but one of representations as well. More precisely, starting from two representations $(U, (B_{\sigma})_{\sigma \in \Sigma})$ and $(W, (C_{\sigma})_{\sigma \in \Sigma})$ of $\Sigma$, we may construct the representation $U \oplus W: = (U \oplus W, ({B}_{\sigma} + C_{\sigma})_{\sigma \in \Sigma})$. Here, ${B}_{\sigma} + C_{\sigma}$ is defined simply by $({B}_{\sigma} + C_{\sigma})(u,  w) = (B_{\sigma}u, C_{\sigma}w)$ for all $u \in U$, $w \in W$ and $\sigma \in \Sigma$.  In particular, if $U$ and $W$ are two complementary invariant subspaces of a representation $X$, then we may construct the representation $U \oplus W$ out of $U$ and $W$ with the representation structure induced by $X$. In that case equation \eqref{decompositiw12} holds as an isomorphism between representations of $\Sigma$. As such, representations may be understood as the direct sum of two or more invariant complementary subspaces. A special role is played by those representations of $\Sigma$ that cannot be decomposed in this fashion any further.
\begin{defi}
A (non-zero) representation $(X, (A_{\sigma})_{\sigma \in \Sigma})$ is said to be \textit{indecomposable} if it cannot be written in a nontrivial way as the direct sum of two  sub-representations. In other words, if we have 
\begin{equation}\label{decompositiwasd}
X = U \oplus W \, ,
\end{equation}
for $U, W \subset X$ invariant spaces then $U = X$ and $W = \{0\}$, or the other way around. $\hfill \triangle$
\end{defi}
\noindent Indecomposable representations may be seen as the `atoms' of a representation, as the following important theorem shows.

\begin{thr}[The Krull-Schmidt Theorem]\label{krullschmidt}
Any (finite dimensional) $\Sigma$-representation $X$ may be written as the direct sum of indecomposable representations:
\begin{equation}\label{decomposiitinv}
X \cong W_1 \oplus W_2 \oplus \dots \oplus W_k \, ,
\end{equation}
for $W_1$ through $W_k$ indecomposable. This decomposition is furthermore unique. In other words, if we also have
\begin{equation}\label{decomposiitinv2}
X \cong U_1 \oplus U_2 \oplus \dots \oplus U_l \, 
\end{equation}
with $U_1$ through $U_l$ indecomposable, then $k = l$ and we have $W_1 \cong U_1, W_2 \cong U_2, \dots, W_k \cong U_k$, after renumbering.
\end{thr}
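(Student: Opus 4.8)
The plan is to establish the two halves of the theorem separately, with the uniqueness part carrying essentially all the weight. For \emph{existence} I would argue by induction on $\dim X$: if $X$ is zero or already indecomposable there is nothing to prove, and otherwise $X = U \oplus W$ nontrivially for some invariant $U, W$ with $\dim U, \dim W < \dim X$, so applying the induction hypothesis to $U$ and to $W$ and concatenating the resulting lists of indecomposable summands exhibits $X$ in the form \eqref{decomposiitinv}.

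For uniqueness the engine is Fitting's Lemma: for any $\varphi \in \End(X)$ and $n := \dim X$ one has $X = \ker(\varphi^n) \oplus \im(\varphi^n)$, and both summands are $\Sigma$-invariant because $\varphi$ commutes with every $A_\sigma$ (so $A_\sigma$ preserves $\ker \varphi^n$ and $\im \varphi^n$). The decomposition itself is the standard fact that the ascending chain $\ker \varphi \subseteq \ker \varphi^2 \subseteq \cdots$ and the descending chain $\im \varphi \supseteq \im \varphi^2 \supseteq \cdots$ both stabilize by step $n$, after which $\ker \varphi^n \cap \im \varphi^n = 0$ while rank--nullity forces the dimensions to add up. Two consequences for an \emph{indecomposable} $W$: first, every $\varphi \in \End(W)$ is either nilpotent or an isomorphism, since indecomposability forces one of $\ker(\varphi^n), \im(\varphi^n)$ to be zero; second, $\End(W)$ is a local ring, i.e. its non-isomorphisms form a two-sided ideal. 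Indeed a composite $\varphi \psi$ (or $\psi \varphi$) that is an isomorphism would make $\varphi$ surjective, hence (finite dimension) an isomorphism; and if a sum $\varphi + \psi$ of two non-isomorphisms were an isomorphism, normalizing it to $\Id_W$ would give $\psi = \Id_W - \varphi$, which is invertible via the finite Neumann series $\sum_{k \geq 0} \varphi^k$ since $\varphi$ is nilpotent, a contradiction.

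With the local-ring property in hand I would run the classical exchange argument, again inducting on $\dim X$. Suppose $X = W_1 \oplus \cdots \oplus W_k = U_1 \oplus \cdots \oplus U_l$ with all summands indecomposable, and let $\iota_i, \pi_i$ and $\iota'_j, \pi'_j$ be the associated inclusions and projections, all of which are $\Sigma$-homomorphisms. From $\sum_j \iota'_j \pi'_j = \Id_X$ we get $\Id_{W_1} = \pi_1 \iota_1 = \sum_{j=1}^{l} (\pi_1 \iota'_j)(\pi'_j \iota_1)$ in the local ring $\End(W_1)$, so some summand, say $(\pi_1 \iota'_1)(\pi'_1 \iota_1)$ after renumbering the $U_j$, is an automorphism $\alpha$ of $W_1$. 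Then $\gamma := \pi'_1 \iota_1 : W_1 \to U_1$ has the left inverse $\delta := \alpha^{-1} \pi_1 \iota'_1$, and $e := \gamma \delta \in \End(U_1)$ is an idempotent $\Sigma$-endomorphism with $\im e = \im \gamma \cong W_1 \neq 0$; indecomposability of $U_1$ forces $\ker e = 0$, so $e = \Id_{U_1}$ and $\gamma$ is an isomorphism $W_1 \xrightarrow{\sim} U_1$. The map $\psi := \iota'_1 \gamma^{-1} \pi_1 \in \End(X)$ is then an idempotent $\Sigma$-endomorphism with image $\im \iota'_1 = U_1$ and kernel $\ker \pi_1 = W_2 \oplus \cdots \oplus W_k$, giving the exchange $X = U_1 \oplus W_2 \oplus \cdots \oplus W_k$. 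Now $W_2 \oplus \cdots \oplus W_k$ and $U_2 \oplus \cdots \oplus U_l$ are both invariant complements of $U_1$ in $X$, hence both isomorphic to $X / U_1$ and so to each other; since this representation has dimension strictly less than $\dim X$, the induction hypothesis yields $k-1 = l-1$ and, after renumbering, $W_i \cong U_i$ for $i \geq 2$. Together with $W_1 \cong U_1$ this is the assertion.

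The main obstacle is extracting the local-ring structure of $\End(W)$ from Fitting's Lemma and then keeping the exchange bookkeeping honest: the computations with the various idempotents and their images and kernels are routine in themselves, but one must check at each step that the maps involved ($\gamma$, $\delta$, $e$, $\psi$, and $\alpha^{-1}$) are genuine $\Sigma$-homomorphisms, so that every subspace produced is a subrepresentation rather than merely a linear subspace. This is precisely the point where the argument uses that we are working with representations of $\Sigma$ and not just with vector spaces, and it is why no hypothesis on $\C$ beyond being a field is needed.
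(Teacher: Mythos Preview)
The paper does not actually prove this theorem; immediately after the statement it refers the reader to \cite{RinkSanders3}. Your argument is the standard proof of Krull--Schmidt for finite-dimensional representations: existence by induction on dimension, and uniqueness via Fitting's lemma, the local-ring structure of $\End(W)$ for indecomposable $W$, and the exchange argument. The paper in fact records the two key ingredients separately as Lemma~\ref{endoindecomp} and Lemma~\ref{fitting}, so your write-up is entirely in the spirit of the surrounding material and of the cited reference.

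One small slip: the map $\psi := \iota'_1 \gamma^{-1} \pi_1$ does not compose as written, since $\pi_1$ lands in $W_1$ while $\gamma^{-1}$ has domain $U_1$. Presumably you intend $\psi := \iota'_1 \gamma \pi_1$, which does have image $U_1$ and kernel $W_2 \oplus \cdots \oplus W_k$ as you claim. This $\psi$ is not in general idempotent, but idempotence is not actually needed: from $\delta\gamma = \Id_{W_1}$ and $\gamma\delta = \Id_{U_1}$ one gets that $\pi_1\iota'_1 = \alpha\gamma^{-1}$ is an isomorphism $U_1 \to W_1$, whence $U_1 \cap \ker\pi_1 = 0$ directly, and a dimension count (using $\dim U_1 = \dim W_1$) gives the exchange $X = U_1 \oplus W_2 \oplus \cdots \oplus W_k$. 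Alternatively, the genuinely idempotent choice $\iota_1\gamma^{-1}\pi'_1$ yields the symmetric exchange $X = W_1 \oplus U_2 \oplus \cdots \oplus U_l$, which works equally well for the induction.
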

\noindent A proof can be found in for example \cite{RinkSanders3}. The main reason for identifying indecomposable representations is that their algebra of endomorphisms is very well understood, as the following result shows.

\begin{lem}\label{endoindecomp}
Let $W$ be an indecomposable representation over the complex numbers. Every endomorphism $F \in \End(W)$ can be uniquely written as  $F = \la\Id_W + N$, for some $\la \in \C$ and nilpotent endomorphism $N \in  \End(W)$ (i.e. satisfying $N^n = 0$ for some $n \in \N$). In particular, any element of $\End(W)$ is either nilpotent or invertible. Moreover, the space $\nil(W) \subset \End(W)$ of nilpotent endomorphisms is a two-sided ideal of $\End(W)$. This means that for any $N,M \in \nil(W)$, $A \in \End(W)$ and $\la, \mu \in \C$, we have that $AN$, $NA$, $N + M$ and more generally $\la N + \mu M$ are all elements of $\nil(W)$.
\end{lem}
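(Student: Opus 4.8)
The plan is to exploit the fact that $\End(W)$ is a finite-dimensional associative algebra over $\mathbb{C}$, and to use Fitting's lemma together with the indecomposability of $W$. First I would recall Fitting's lemma: for any $F \in \End(W)$ on a finite-dimensional space, one has a canonical decomposition $W = \ker(F^n) \oplus \im(F^n)$, where $n = \dim W$, and moreover $F$ restricts to a nilpotent map on $\ker(F^n)$ and to an invertible map on $\im(F^n)$. The key point is that when $F$ is an \emph{endomorphism of the representation} $W$, both summands $\ker(F^n)$ and $\im(F^n)$ are $\Sigma$-invariant (images and kernels of homomorphisms are invariant, as noted in the excerpt, and powers of a homomorphism are homomorphisms). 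Since $W$ is indecomposable, one of these two summands must be all of $W$ and the other $\{0\}$. Hence every $F \in \End(W)$ is either nilpotent (if $\ker(F^n) = W$) or invertible (if $\im(F^n) = W$, so $\ker(F^n) = 0$, so $F$ is injective hence bijective). This establishes the dichotomy.

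Next I would prove the decomposition $F = \lambda\,\Id_W + N$. Pick any eigenvalue $\lambda \in \mathbb{C}$ of $F$ (which exists since we are over $\mathbb{C}$ and $W \neq 0$), and consider $F - \lambda\,\Id_W \in \End(W)$. This is an endomorphism of the representation with nontrivial kernel, so by the dichotomy just proved it is \emph{not} invertible, hence it is nilpotent; call it $N$. Then $F = \lambda\,\Id_W + N$ as desired. For uniqueness: if $\lambda\,\Id_W + N = \mu\,\Id_W + M$ with $N, M$ nilpotent, then $(\lambda - \mu)\,\Id_W = M - N$. If $\lambda \neq \mu$, the left side is invertible while the right side is nilpotent (once I have shown $\nil(W)$ is closed under sums, below), a contradiction; so $\lambda = \mu$ and then $N = M$.

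Finally I would verify that $\nil(W)$ is a two-sided ideal. Closure under scalar multiples is immediate. For $A \in \End(W)$ and $N \in \nil(W)$: both $AN$ and $NA$ are endomorphisms of the representation, and neither can be invertible (since $N$ is not), so by the dichotomy both are nilpotent. Closure under addition is the one genuinely delicate point and I expect it to be the main obstacle: given $N, M \in \nil(W)$, I must show $N + M$ is nilpotent; equivalently, by the dichotomy, that $N + M$ is not invertible. Suppose for contradiction $N + M$ is invertible with inverse $B \in \End(W)$. Then $\Id_W = B(N+M) = BN + BM$, so $BM = \Id_W - BN$. Since $BN \in \nil(W)$ by the ideal property just shown, $\Id_W - BN$ is invertible (its inverse is $\sum_{j\ge 0}(BN)^j$, a finite sum as $BN$ is nilpotent), hence $BM$ is invertible, hence $M$ is invertible, contradicting $M \in \nil(W)$. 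Therefore $N + M$ is not invertible, so it is nilpotent, and $\nil(W)$ is a two-sided ideal. This closes all the required claims; the only subtlety is the circular-looking dependence between ``sums of nilpotents are nilpotent'' and ``$\Id - (\text{nilpotent})$ is invertible,'' which is resolved because the latter needs only the single-element nilpotency of $BN$, which follows from the already-established one-sided ideal property.
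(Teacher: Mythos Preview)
Your proof is correct and follows the paper's argument closely for the dichotomy (via the Fitting decomposition $W=\ker(F^n)\oplus\im(F^n)$ and indecomposability) and for the existence of $F=\la\Id_W+N$ (pick an eigenvalue, apply the dichotomy to $F-\la\Id_W$).

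Where you diverge is in the proofs of uniqueness and of closure of $\nil(W)$ under sums. The paper handles both with the trace: since a nilpotent map has trace zero, one reads off $\la=\tr(F)/\dim(W)$, which gives uniqueness immediately; for $N+M$ one writes $N+M=\la\Id_W+N'$ via the already-established existence and computes $\la=(\tr N+\tr M)/\dim(W)=0$. You instead avoid the trace entirely: for sums you argue directly that $N+M$ cannot be invertible (since otherwise $BM=\Id_W-BN$ would be invertible by the geometric series, forcing $M$ invertible), and you deduce uniqueness afterwards from this closure. Both routes are sound. The paper's trace argument is shorter and makes the scalar $\la$ explicit, which is exactly what is needed later when the multipliers are \emph{defined} via $\tr(P^l_i F P^l_j)/\dim(W_l)$; your argument is more self-contained in that it never appeals to linear-algebra invariants beyond the nilpotent/invertible dichotomy, and it would transfer more readily to settings where a trace is unavailable. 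The forward reference in your uniqueness step is harmless, as you note, since your proof of additive closure uses only the dichotomy and the one-sided absorption $AN,NA\in\nil(W)$.
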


\begin{proof}
We will first show that any endomorphism of $W$ is either nilpotent or invertible. To this end, suppose $F \in \End(W)$ is non-invertible. For large enough $n \in \N$ we have that 
\begin{equation}\label{decomposiitinva3}
W = \ker(F^n) \oplus \im(F^n) \, .
\end{equation}
This can for instance be seen by looking at the Jordan normal form of $F$. As $W$ is indecomposable, and as both $\ker(F^n)$ and $\im(F^n)$ are $\Sigma$-invariant, either  $\ker(F^n)$ or $\im(F^n)$ equals $W$, with the other vanishing. Since $F$ was assumed non-invertible, it follows that  $\ker(F^n)$ is non-trivial and so equals the whole space $W$. This shows that $F$ is in fact nilpotent. We conclude that any endomorphism of an indecomposable representation is indeed either invertible or nilpotent. \\
Next, let $F$ be any endomorphism of $W$. If $\la \in \C$ is an eigenvalue of $F$, then $F - \la\Id_W$ is a non-invertible endomorphism of $W$. Hence $F - \la\Id_W$ must be nilpotent. Setting $N:= F - \la\Id_W$, we get the required expression $F = \la\Id_W + N$. \\
For uniqueness, suppose we have $F = \la\Id_W + N  = \mu\Id_W + N'$ with $\la, \mu \in \C$ and $N, N' \in \nil(W)$. It follows that 
\begin{equation}
\la = \tr(F)/\dim(W) = \mu \, ,
\end{equation}
from which we also see that $N = N'$.\\
Finally, suppose we are given $N \in \nil(W)$. Since any endomorphism is either invertible or nilpotent, it follows immediately that $AN, NA , \la N \in \nil(W)$ for all $A \in \End(W)$ and $\la \in \C$, since these cannot be invertible. It remains to show that $N+M \in \nil(W)$ whenever we have $N,M \in \nil(W)$. To this end, write $N+M = \la\Id_W + N'$ for some $N' \in \nil(W)$ and $\la \in \C$. It follows that
\begin{equation}
\la = \tr(N+M)/\dim(W) = (\tr(N) + \tr(M))/\dim(W) = 0 \, .
\end{equation}
Hence, we see that $N+M = N' \in \nil(W)$. This finishes the proof.
\end{proof}

\begin{remk}\label{remkiso}
Suppose we have $F, G \in \End(W)$ for $W$ indecomposable. We write $F = \la\Id + N$ and $G = \mu \Id + M$ for $\la, \mu \in \C$ and $N, M \in \nil(W)$. It follows from Lemma \ref{endoindecomp} that the unique expression as the sum of a multiple of the identity and a nilpotent homomorphism of $F+G$ is given by $(\la + \mu)\Id + (N+M)$. Likewise, $FG$ may be expressed in this way as $FG = (\la\Id + N)(\mu\Id + M) = (\la\mu)\Id + (\la M + \mu N + NM)$. It is also clear that $F = \la\Id + N$ is nilpotent if and only if we have $\la = 0$. Moreover, for any $\la \in \C$ we have $\la\Id \in \End(W)$. These results may be summarised by saying that there exists an isomorphism of complex algebras between $\End(W)/\nil(W)$ and $\C$. Explicitly, this isomorphism is given by $[\la\Id + N] = [\la\Id] \in \End(W)/\nil(W) \mapsto \la \in \C\,$. $\hfill \triangle$
\end{remk}

\begin{remk}\label{burrealcse}
Lemma \ref{endoindecomp} only holds because we consider representations over the complex numbers. If we consider indecomposables over the real numbers then we get one of three cases. The indecomposable $W$ is then either of real, complex or quaternionic type, depending on the structure of $\End(W)/\nil(W)$. It does remain true that any endomorphism of an indecomposable $W$ is either invertible or nilpotent though, with $\nil(W)$ an ideal of $\End(W)$. We will not pursue this any further here, but see for example \cite{Schwenker} or \cite{Gensym}. $\hfill \triangle$
\end{remk}
\noindent Lastly, we will use the following result.
\begin{lem}[The Fitting Lemma]\label{fitting}
Let $W$ and $\tilde{W}$ be two indecomposable representations of $\Sigma$, and let $F \in \Hom(W, \tilde{W})$ and $G \in \Hom(\tilde{W}, W)$ be two homomorphisms. If $G \circ F \in \End(W)$ is invertible, then $W$ and $\tilde{W}$ are isomorphic (with $F$ and $G$ both isomorphisms). In particular, if $W$ and $\tilde{W}$ are not isomorphic, then $G \circ F$ and $F \circ G$ are necessarily nilpotent.
\end{lem}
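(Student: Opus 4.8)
The plan is to exploit Lemma \ref{endoindecomp}, which says that any endomorphism of an indecomposable complex representation is either invertible or nilpotent, together with the two-sided ideal property of $\nil(W)$. First I would assume that $G \circ F \in \End(W)$ is invertible and aim to show $F$ and $G$ are both isomorphisms. Surjectivity of $G$ is immediate: since $G \circ F$ is surjective, so is $G$. Dually, injectivity of $F$ is immediate: since $G \circ F$ is injective, so is $F$. To upgrade these to isomorphisms I would look at the composition the other way around, $F \circ G \in \End(\tilde W)$. The key claim is that $F \circ G$ is also invertible. If it were not, then by Lemma \ref{endoindecomp} it would be nilpotent, say $(F \circ G)^n = 0$ for some $n$. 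But then $(G \circ F)^{n+1} = G \circ (F \circ G)^n \circ F = 0$, contradicting invertibility of $G \circ F$. Hence $F \circ G$ is invertible on $\tilde W$.

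Now, with $G \circ F$ invertible on $W$ and $F \circ G$ invertible on $\tilde W$, standard linear algebra gives that $F$ and $G$ are bijections: $F$ has a left inverse $(G\circ F)^{-1}\circ G$ and a right inverse $G \circ (F \circ G)^{-1}$, so $F$ is invertible, and similarly for $G$. Since $F$ is an invertible homomorphism of representations, its inverse is again a homomorphism (as recalled in the excerpt right after the definition of isomorphism of representations), so $F: W \to \tilde W$ is an isomorphism of $\Sigma$-representations; likewise $G$. This proves the first assertion.

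For the contrapositive statement, suppose $W$ and $\tilde W$ are not isomorphic. If $G \circ F$ were invertible, the argument above would produce an isomorphism between $W$ and $\tilde W$, a contradiction; so $G \circ F$ is non-invertible in $\End(W)$, hence nilpotent by Lemma \ref{endoindecomp}. The same reasoning with the roles of $W$ and $\tilde W$ (and of $F$ and $G$) interchanged shows $F \circ G \in \End(\tilde W)$ is nilpotent. Alternatively, once $G \circ F$ is known nilpotent one can again use $(F \circ G)^{n+1} = F \circ (G \circ F)^n \circ G = 0$.

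There is really no serious obstacle here; the only point requiring a moment's care is the passage from "$F$ has a one-sided inverse built from the invertible composites" to "$F$ is genuinely invertible," and the observation that nilpotency of one of $G\circ F$, $F \circ G$ forces nilpotency of the other via the cyclic-shift identity $(F\circ G)^{n+1} = F \circ (G \circ F)^n \circ G$. Both are routine. The essential input is entirely Lemma \ref{endoindecomp}: indecomposability over $\C$ is what makes "non-invertible $\Rightarrow$ nilpotent" available, and that dichotomy is the engine of the whole argument.
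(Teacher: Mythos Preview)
Your proof is correct. The paper does not actually supply a proof of this lemma; it only notes that ``A proof of Lemma \ref{fitting} can be found in for example \cite{RinkSanders3}'' and observes that the final sentence follows from Lemma \ref{endoindecomp}. Your argument is the standard one, and the key steps---using the cyclic-shift identity $(G\circ F)^{n+1} = G\circ (F\circ G)^n\circ F$ to transfer nilpotency/invertibility between $F\circ G$ and $G\circ F$, and invoking the invertible-or-nilpotent dichotomy of Lemma \ref{endoindecomp}---are exactly what one expects and are fully justified by the material already in the paper.
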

\noindent Note that the last part of Lemma \ref{fitting} follows from Lemma \ref{endoindecomp} or Remark \ref{burrealcse}, as $G \circ F$ and $F \circ G$ are nilpotent whenever they are not invertible. A proof of Lemma \ref{fitting} can be found in for example \cite{RinkSanders3}.




\section{Multipliers and Robust Subspaces} \label{Network Multipliers and Eigenvalues} 
In this section we define the so-called multipliers of a representation and relate them to the eigenvalues of an endomorphism. In the next chapter we will then define the network multipliers to be the multipliers for the particular representation $\C^{\Sigma}$ corresponding to the fundamental network. Throughout this section,  $(X,(A_{\sigma})_{\sigma \in \Sigma})$ denotes a complex, finite dimensional representation of a monoid $\Sigma$. We also assume that $S \subset X$ is a (complex) linear subspace with the property that $F(S)\subset S$ for all $F \in \End(X)$. Generalising the analogous property for synchrony spaces, we say that $S$ is a \textit{robust} subspace of the representation $X$. Note that $S$ need not be $\Sigma$-invariant. \\
We will not assume anything else about $X$ or $S$. In particular, the results proven in this section will hold in the special case of $X = V^{\Sigma}$, with $S$ given by $S_p$ or more generally $S_{\bowtie}$ for $\bowtie$ a balanced relation on $\Sigma$.\\
We start with two important lemmas. These are similar to results in \cite{cen} and \cite{proj}, but included here for completeness.

\begin{lem}\label{knip}
Let $W$ be a complementable sub-representation of $X$, and suppose we have the decomposition
\begin{equation}
W = W_1 \oplus W_2 \oplus \dots \oplus W_k\, ,
\end{equation}
for some sub-representations $W_1, \dots, W_k$. Then we also have
\begin{equation}\label{inddecom341}
W \cap S = (W_1 \cap S) \oplus (W_2 \cap S) \oplus \dots \oplus (W_k \cap S)\, .
\end{equation}
\end{lem}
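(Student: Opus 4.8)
The inclusion $\supseteq$ in \eqref{inddecom341} is immediate, since each $W_i \cap S \subseteq W \cap S$ and the subspaces $W_i$ are already in direct sum. The real content is the reverse inclusion: every $x \in W \cap S$ decomposes as $x = x_1 + \dots + x_k$ with $x_i \in W_i$, and I must show each $x_i$ already lies in $S$. The plan is to exhibit, for each $i$, an endomorphism of $X$ (an element of $\End(X)$, i.e. a homomorphism of the representation $X$ into itself) that acts as the projection onto $W_i$ along $\bigoplus_{j \neq i} W_j \oplus W'$, where $W'$ is an invariant complement of $W$ in $X$. Since $S$ is robust, applying such a projection to $x \in S$ lands back in $S$; but it also sends $x$ to $x_i$. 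Hence $x_i \in S$, which is what we want.

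**Construction of the projections.** First, because $W$ is a complementable sub-representation, write $X = W \oplus W'$ with $W'$ a $\Sigma$-invariant complement; let $\pi_W \colon X \to X$ be the projection onto $W$ along $W'$. As the excerpt notes (in the discussion following \eqref{decompositiw12}), the projection associated to a decomposition into invariant subspaces is itself a homomorphism, so $\pi_W \in \End(X)$. Next, refine using the given decomposition $W = W_1 \oplus \dots \oplus W_k$: for each $i$, let $p_i \colon W \to W$ be the projection onto $W_i$ along $\bigoplus_{j \neq i} W_j$. Again each $p_i$ is a homomorphism of the representation $W$ (for the same reason: the $W_j$ are invariant). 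Now set $P_i := \iota \circ p_i \circ \pi_W \in \End(X)$, where I regard $p_i$ as a map $W \to W \hookrightarrow X$ and $\pi_W$ as landing in $W$; concretely $P_i$ restricts to $p_i$ on $W$ and kills $W'$. Each $P_i$ is a composition of homomorphisms, hence lies in $\End(X)$, and one checks $\sum_i P_i = \pi_W$ and $P_i P_j = \delta_{ij} P_i$.

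**Conclusion.** Take $x \in W \cap S$ and write its (unique) decomposition $x = x_1 + \dots + x_k$ with $x_i \in W_i$. Since $x \in W$ we have $\pi_W(x) = x$, and therefore $P_i(x) = p_i(x) = x_i$. On the other hand $P_i \in \End(X)$ and $S$ is robust, so $P_i(x) \in S$. Thus $x_i \in W_i \cap S$ for every $i$, giving $x \in \bigoplus_i (W_i \cap S)$. Combined with the trivial inclusion, this proves \eqref{inddecom341}.

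**Expected obstacle.** There is no deep obstacle here; the lemma is essentially a bookkeeping consequence of robustness plus the fact that projections onto invariant summands are homomorphisms. The one point requiring mild care is making sure the projections $P_i$ genuinely live in $\End(X)$ and not merely in $\End(W)$ — this is exactly where complementability of $W$ in $X$ is used, to extend $p_i$ to all of $X$ by zero on the invariant complement $W'$. Everything else is the standard interplay between a direct-sum decomposition and an arbitrary subspace closed under the relevant projections.
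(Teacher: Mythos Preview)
Your proof is correct and follows essentially the same route as the paper: one extends the projections onto the $W_i$ to all of $X$ by using an invariant complement $W'$ of $W$, observes that these extended projections lie in $\End(X)$, and then invokes robustness of $S$ to conclude each component $x_i$ stays in $S$. The paper phrases this slightly more directly by taking $P_i$ to be the projection with respect to the single decomposition $X = W' \oplus W_1 \oplus \dots \oplus W_k$, whereas you factor it as $P_i = \iota \circ p_i \circ \pi_W$, but these are the same maps and the argument is identical.
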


\begin{proof}
Let $U \subset X$ be some invariant complement to $W$ and denote by $P_i$ the projection onto $W_i$ for $i \in \{1, \dots, k\}$, with respect to the decomposition
\begin{equation}
X = U \oplus W_1 \oplus W_2 \oplus \dots \oplus W_k\, .
\end{equation}
As every $P_i$ is an endomorphism, it follows that these maps send the space $S$ to itself. In particular, given an element $w \in W \cap S$ we may write
\begin{equation}
w = \sum_{i=1}^k P_i(w) \,
\end{equation}
with $P_i(w) \in W_i \cap S$ for every $i \in \{1, \dots, k\}$. Hence, every element of $W \cap S$ can be written as a sum of elements in the different $W_i \cap S$. Such an expression is furthermore unique, as any element of $W$ may be written uniquely as a sum of elements in the $W_i$. This shows that we indeed have the decomposition \eqref{inddecom341}.
\end{proof}
\noindent As a special case of Lemma \ref{knip}, suppose we are given a decomposition of $X$ into so-called \textit{isotypic} components. That is, we have
\begin{equation}\label{decomp290234}
X \cong W_1^{n_1} \oplus W_2^{n_2} \oplus \dots \oplus W_k^{n_k} \, ,
\end{equation}
with the isotypic components $W_i^{n_i}$ given by
\begin{equation}\label{components238736732}
W_i^{n_i} := W_i \oplus \dots \oplus W_i \, (n_i \text{ times})\, ,
\end{equation}
and with the $W_i$ mutually non-isomorphic indecomposable representations. Identifying $S$ with its image under the isomorphism of equation \eqref{decomp290234}, it follows that we may write 
\begin{equation}\label{decomp290222}
S = (W_1^{n_1} \cap S) \oplus (W_2^{n_2}  \cap S) \oplus \dots \oplus (W_k^{n_k}  \cap S) \, ,
\end{equation}
as well as
\begin{equation}\label{components2387367sa32}
W_i^{n_i} \cap S := (W_i \cap S) \oplus \dots \oplus (W_i \cap S) \, .
\end{equation}
Of course, equation \eqref{components2387367sa32} seems somewhat misleading, as it implies some kind of similarity between the intersections of $S$ with the different copies of $W_i$. The following result shows that such a similarity indeed exists.

\begin{lem}\label{plak}
Suppose $W, W' \subset X$ are two complementable, isomorphic sub-representations (not necessarily appearing in a same decomposition of $X$). We have
\begin{equation}
\dim(W \cap S) = \dim(W' \cap S) \, .
\end{equation}
\end{lem}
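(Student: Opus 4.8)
The statement is that if $W, W' \subset X$ are complementable, isomorphic sub-representations, then $\dim(W \cap S) = \dim(W' \cap S)$, where $S$ is a robust subspace (i.e. $F(S) \subseteq S$ for all $F \in \End(X)$). The plan is to use the isomorphism $\phi \colon W \to W'$ to build an actual endomorphism of $X$ that restricts to $\phi$ on $W$ and vanishes on an invariant complement, and then transport $W \cap S$ into $W' \cap S$ using robustness of $S$. By symmetry the same construction with $\phi^{-1}$ gives the reverse inclusion of dimensions, so the two dimensions must be equal.

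**Main steps.** First I would pick an invariant complement $U$ to $W$, so $X = W \oplus U$ as $\Sigma$-representations (this uses that $W$ is complementable). Let $P \colon X \to W$ be the projection along $U$; since $U$ is invariant, $P$ is a homomorphism, hence $P \in \End(X)$. Next, let $\phi \in \Hom(W, W')$ be an isomorphism (existing since $W \cong W'$), and let $\iota \colon W' \hookrightarrow X$ be the inclusion, which is a homomorphism. Then define $F := \iota \circ \phi \circ P \colon X \to X$. As a composition of homomorphisms, $F \in \End(X)$, and by construction $F|_W = \phi$ (viewed as a map into $X$) while $F|_U = 0$. Since $S$ is robust we get $F(S) \subseteq S$. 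Now for any $s \in W \cap S$ we have $F(s) = \phi(s) \in W'$, and also $F(s) \in S$, so $\phi(s) \in W' \cap S$; thus $\phi$ maps $W \cap S$ into $W' \cap S$. Because $\phi$ is injective, $\dim(W \cap S) \le \dim(W' \cap S)$.

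**Closing by symmetry.** For the reverse inequality I would run the identical argument with the roles of $W$ and $W'$ swapped: pick an invariant complement $U'$ to $W'$, let $P'$ be the corresponding projection, and set $F' := \iota_W \circ \phi^{-1} \circ P' \in \End(X)$, where $\phi^{-1} \in \Hom(W', W)$ is the inverse isomorphism (a homomorphism by the remarks on isomorphisms of representations in the excerpt). Robustness of $S$ then gives that $\phi^{-1}$ maps $W' \cap S$ injectively into $W \cap S$, so $\dim(W' \cap S) \le \dim(W \cap S)$. Combining the two inequalities yields $\dim(W \cap S) = \dim(W' \cap S)$, as claimed.

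**Anticipated obstacle.** The only real subtlety is making sure the map $F$ we build is genuinely an element of $\End(X)$ — i.e. commutes with all the $A_\sigma$ — rather than merely a linear map; this is exactly why we need the complement $U$ to be $\Sigma$-\emph{invariant} (so that $P$ is a homomorphism) and why we invoke that inclusions, projections along invariant complements, and inverses of isomorphisms are all homomorphisms, as recorded earlier in the excerpt. Everything else is a one-line application of the defining property $F(S) \subseteq S$ of a robust subspace. I do not expect any technical difficulty beyond keeping track of which maps are homomorphisms.
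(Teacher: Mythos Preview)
Your proposal is correct and follows essentially the same argument as the paper: extend the isomorphism $\phi\colon W\to W'$ to an endomorphism of $X$ by letting it vanish on an invariant complement of $W$, use robustness of $S$ to conclude that $\phi$ maps $W\cap S$ injectively into $W'\cap S$, and then swap the roles of $W$ and $W'$. The paper states the extension in one line rather than writing it as $\iota\circ\phi\circ P$, but the content is identical.
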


\begin{proof}
As $W$ and $W'$ are isomorphic, there exists an isomorphism $\phi$ from $W$ to $W'$. We may extend $\phi$ to an endomorphism $\tilde{\phi}$ from $X$ to itself by letting it vanish on some invariant complement of $W$. It follows that $\tilde{\phi}$ sends $W \cap S$ into $W' \cap S$. Note that $\tilde{\phi}$ is injective on $W$, as we have $\tilde{\phi}|_{W} = \phi$. It follows that
\begin{equation}
\dim(W \cap S) \leq \dim(W' \cap S) \, .
\end{equation}
By reversing the roles of $W$ and $W'$, we see that indeed
\begin{equation}
\dim(W \cap S) = \dim(W' \cap S) \, .
\end{equation}
This proves the Lemma.
\end{proof}

\noindent Contrary to compact group representations, there may be non-zero endomorphisms between non-isomorphic indecomposable monoid representations. In particular, suppose we have a map $F \in \End(X)$ and a given decomposition of $X$ into indecomposable representations. We may then write $F$ in matrix form, with the entries given by endomorphisms between the indecomposable components. It follows that such a matrix need not necessarily have any vanishing entries. However, it is shown in \cite{Gensym} that some entries may be set to zero, without changing the spectrum of $F$. To this end, we have the following definition.
\begin{defi}
Given a decomposition of $X$ into indecomposable representations, we denote by $\mathcal{J} \subset \End(X)$ the set of all endomorphisms such that the entries between isomorphic indecomposable components are all nilpotent. In other words, let us write a decomposition of $X$ as
\begin{equation}\label{decos6}
X \cong W_1\oplus W_2 \oplus \dots \oplus W_s \, ,
\end{equation}
where the $W_i$ are (not necessarily distinct) indecomposable representations. We furthermore denote by $P_i: X \rightarrow W_i \subset X$ the projection onto the $i$th component of expression \eqref{decos6}. An endomorphism $F \in \End(X)$ then belongs to $\mathcal{J}$ if and only if $P_i \circ F \circ P_j$ is nilpotent (seen as an endomorphism of $W_i$) for all $i,j \in \{1, \dots,  s \}$ such that $W_i = W_j$. \\
We likewise denote by $\mathcal{D}$ the space of all maps $F \in \End(X)$ such that $P_i \circ F \circ P_j$ is a scalar multiple of the identity on $W_i$ if $W_i = W_j$, and such that $P_i \circ F \circ P_j = 0$ whenever $W_i \not= W_j$. \\
Recall from Lemma \ref{endoindecomp} that when $W_i$ is indecomposable, there is a unique expression of the elements in $\End(W_i)$ as the sum of a scalar multiple of the identity and a nilpotent endomorphism. From this we see that 
\begin{equation}
\End(X) = \mathcal{J} \oplus \mathcal{D} \, ,
\end{equation}
as complex vector spaces. We will use $P_{\mathcal{J}}$ and $P_{\mathcal{D}}$ to denote the projections from $\End(X)$ onto $\mathcal{J}$ and $\mathcal{D}$ respectively, and write $F_{\mathcal{J}} := P_{\mathcal{J}}(F)$ and $F_{\mathcal{D}} := P_{\mathcal{D}}(F)$ for $F \in \End(X)$.
\end{defi}

\begin{remk}\label{almostthre}
Instead of equation \eqref{decos6}, let us denote a decomposition of $X$ as
\begin{equation}\label{decomp290q234}
X \cong W_1^{n_1} \oplus W_2^{n_2} \oplus \dots \oplus W_k^{n_k} \, 
\end{equation}
with 
\begin{equation}\label{components238736q732}
W_i^{n_i} := W_i \oplus \dots \oplus W_i \, (n_i \text{ times})\, ,
\end{equation}
and with the $W_i$ mutually non-isomorphic indecomposable representations. In other words, equations \eqref{decomp290q234} and \eqref{components238736q732} together give the decomposition of \eqref{decos6}, but with copies of the same indecomposable representation grouped together into isotypic components.  In particular, we have $n_1 + \dots + n_k = s$. A map $F \in \End(X)$ may then be written as
\begin{equation}
F  = \begin{pmatrix*}[c]\label{fs1234}
 \ddots & & & & {\color{blue}F_{\bullet,\bullet} }\\
 & F^{l}_{1,1}& \dots & F^{l}_{1,n_l} &  \\
& \vdots & & \vdots &   \\
& F^{l}_{n_l,1} & \dots & F^{l}_{n_l,n_l}&  \\
 {\color{blue}F_{\bullet,\bullet} }& & & & \ddots\\
\end{pmatrix*}  \, .
\end{equation}
Here, $F^{l}_{i,j}$ denotes a homomorphism from the $j$th copy of $W_l$ in $W_l^{n_l}$ to the $i$th copy of $W_l$ in $W_l^{n_l}$, for $l \in \{1, \dots, k\}$ and $i,j \in \{1, \dots, n_l\}$. We also write $P^l_i$ for the projection from $X$ onto the $i$th copy of $W_l$ in $W_l^{n_l}$, so that we have $F^{l}_{i,j} = P^l_i\circ F \circ P^l_j$. The maps ${\color{blue}F_{\bullet,\bullet} }$ denote homomorphisms between non-isomorphic indecomposable representations, which will not be of further interest to us here.  \\
Let us now write $F^{l}_{i,j} = \la_{i,j}^l\Id_{W_l} +  N_{i,j}^l$ for some complex number $\la_{i,j}^l$ and a nilpotent map $N_{i,j}^l \in \End(W_l)$. We then have
\begin{equation}
F_{\mathcal{J}}  = \begin{pmatrix*}[c]
 \ddots & & & & {\color{blue}F_{\bullet,\bullet} }\\
 & N^{l}_{1,1}& \dots & N^{l}_{1,n_l} &  \\
& \vdots & & \vdots &   \\
& N^{l}_{n_l,1} & \dots & N^{l}_{n_l,n_l}&  \\
 {\color{blue}F_{\bullet,\bullet} }& & & & \ddots\\
\end{pmatrix*}  \, ,
\end{equation} 
and
\begin{equation}\label{fs123}
F_{\mathcal{D}}  = \begin{pmatrix*}[c]
 \ddots & & & & {\color{blue} 0 }\\
 & \la^{l}_{1,1}\Id_{W_l}& \dots & \la^{l}_{1,n_l}\Id_{W_l} &  \\
& \vdots & & \vdots &   \\
& \la^{l}_{n_l,1}\Id_{W_l} & \dots & \la^{l}_{n_l,n_l}\Id_{W_l}&  \\
 {\color{blue} 0 }& & & & \ddots\\
\end{pmatrix*}  \, .
\end{equation} 
Note that the numbers $\la_{i,j}^l$ are related to $F$ by
\begin{equation}
\la_{i,j}^l = \tr(P^l_i\circ F \circ P^l_j)/\dim(W_l)\, .
\end{equation}
These numbers will be used to define the network multipliers of a class of constructible networks. $\hfill \triangle$
\end{remk}

\begin{defi}[Multipliers of a Representation]
Given a decomposition of $X$ into indecomposable representations as in Remark \ref{almostthre}, we define linear maps $\Lambda_{i,j}^l : \End(X) \rightarrow \C$ for $l \in \{1, \dots, k\}$ and $i,j \in \{1, \dots, n_l\}$ by
\begin{equation}
\Lambda_{i,j}^l(F) := \tr(P^l_i\circ F \circ P^l_j)/\dim(W_l)\, ,
\end{equation}
for all $F \in \End(X)$.  In other words, $\Lambda_{i,j}^l(F)$ is equal to the number $\la_{i,j}^l$ as in equation \eqref{fs123}. Next, we define matrix-valued functions $\Lambda^l : \End(X) \rightarrow \C^{n_l \times n_l}$ for $l \in \{1, \dots, k\}$ by simply setting $(\Lambda^l(F))_{i,j} := \Lambda^l_{i,j}(F)$. We call these maps $\Lambda_l$ the \textit{multipliers} of the representation $X$ (corresponding to the given decomposition of $X$). Later when we return to the special case of $X = \C^{\Sigma}$, the multipliers will be called the \textit{network multipliers} of $\Sigma$.
\end{defi}
\noindent From the definition, we may already conclude a first basic fact about multipliers.

\begin{lem}\label{indjoepie}
The full set of maps $\Lambda^l_{i.j}: \End(X) \rightarrow \C$, for $l \in \{1, \dots, k\}$ and $i,j \in \{1, \dots, n_l\}$ is linearly independent. In particular, given any $n_1^2 + \dots + n_k^2$ complex numbers $(\la^l_{i,j})_{1 \leq i,j \leq n_l}^{1 \leq l \leq k}$, there exists an endomorphism $F \in \End(X)$ such that $\Lambda^l_{i.j}(F) = \la^l_{i,j}$ for all $l \in \{1, \dots, k\}$ and $i,j \in \{1, \dots, n_l\}$.
\end{lem}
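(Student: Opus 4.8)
The plan is to exhibit, for each fixed triple $(l,i,j)$, an endomorphism $E^{l}_{i,j} \in \End(X)$ on which the functional $\Lambda^{l}_{i,j}$ takes the value $1$ while every other functional $\Lambda^{l'}_{i',j'}$ vanishes; this immediately gives both linear independence and the surjectivity-type statement at the end of the lemma (take the appropriate linear combination $F = \sum_{l,i,j} \lambda^{l}_{i,j} E^{l}_{i,j}$). First I would fix the decomposition of $X$ as in Remark~\ref{almostthre}, so that $X \cong \bigoplus_{l=1}^{k} W_l^{n_l}$, and recall the projections $P^{l}_{i} : X \rightarrow X$ onto the $i$th copy of $W_l$. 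For the off-diagonal case $i \neq j$ the natural candidate is $E^{l}_{i,j} := P^{l}_{i} \circ \iota \circ P^{l}_{j}$, where $\iota$ identifies the $j$th copy of $W_l$ with the $i$th copy (using that these copies are literally the same indecomposable $W_l$ in the chosen decomposition); for $i = j$ one simply takes $E^{l}_{i,i} := P^{l}_{i}$. By construction $E^{l}_{i,j}$ maps the $j$th copy of $W_l$ isomorphically onto the $i$th copy of $W_l$ and kills every other summand.

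Next I would compute $\Lambda^{l'}_{i',j'}(E^{l}_{i,j}) = \tr\!\big(P^{l'}_{i'} \circ E^{l}_{i,j} \circ P^{l'}_{j'}\big)/\dim(W_{l'})$. If $l' \neq l$, then $E^{l}_{i,j}$ sends the source summand (a copy of $W_{l'}$) to zero, so the trace is $0$. If $l' = l$ but $(i',j') \neq (i,j)$, then either $j' \neq j$, in which case $E^{l}_{i,j}\circ P^{l}_{j'} = 0$, or $i' \neq i$, in which case $P^{l}_{i'}\circ E^{l}_{i,j} = 0$; either way the trace vanishes. Finally, for $(l',i',j') = (l,i,j)$ the composite $P^{l}_{i} \circ E^{l}_{i,j} \circ P^{l}_{j}$ equals $E^{l}_{i,j}$ itself viewed as an endomorphism of $W_l$ via the chosen identifications, which is (conjugate to) the identity on $W_l$, hence has trace $\dim(W_l)$; dividing gives $\Lambda^{l}_{i,j}(E^{l}_{i,j}) = 1$. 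This establishes the ``dual basis'' property. Linear independence of the $\Lambda^{l}_{i,j}$ is then standard: if $\sum_{l,i,j} c^{l}_{i,j} \Lambda^{l}_{i,j} = 0$ as a functional, evaluate at each $E^{l'}_{i',j'}$ to conclude $c^{l'}_{i',j'} = 0$. For the ``in particular'' statement, given the prescribed numbers $\lambda^{l}_{i,j}$, the endomorphism $F := \sum_{l=1}^{k}\sum_{i,j=1}^{n_l} \lambda^{l}_{i,j} E^{l}_{i,j}$ satisfies $\Lambda^{l}_{i,j}(F) = \lambda^{l}_{i,j}$ by linearity and the dual-basis property.

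The only genuinely delicate point is making precise the ``identification $\iota$ of the $j$th copy of $W_l$ with the $i$th copy'' and checking that $P^{l}_i \circ \iota \circ P^{l}_j$ really is an endomorphism of $X$, i.e. lies in $\End(X) = \Hom(X,X)$ (it is automatically $\Sigma$-equivariant because $\iota$ is an isomorphism of representations between two copies of the same $W_l$ and the $P^{l}_i$ are equivariant projections, by the discussion preceding Lemma~\ref{knip}). Concretely, if the isomorphism $X \cong \bigoplus W_l^{n_l}$ is fixed, then "the $i$th copy of $W_l$" and "the $j$th copy of $W_l$" are canonically the same space $W_l$, so $\iota$ is literally the identity of $W_l$ transported through the isomorphism, and there is nothing to choose. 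I would spell this out just enough to be unambiguous and then the trace computations above are immediate. I expect this bookkeeping — being careful about "which copy" and confirming equivariance — to be the main (and only) obstacle; the rest is a routine dual-basis argument.
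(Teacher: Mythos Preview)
Your proof is correct and is essentially the same as the paper's. The paper simply constructs, given prescribed $\lambda^l_{i,j}$, the endomorphism $F$ with block entries $F^l_{i,j} = \lambda^l_{i,j}\Id_{W_l}$ and all off-isotypic entries zero; your $E^l_{i,j}$ are exactly these maps for the special choice $\lambda^{l'}_{i',j'} = \delta_{(l,i,j),(l',i',j')}$, and your final $F = \sum \lambda^l_{i,j} E^l_{i,j}$ recovers the paper's map verbatim. The only difference is presentational: you spell out the dual-basis verification and the equivariance of $E^l_{i,j}$, whereas the paper dispatches both in one sentence.
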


\begin{proof}
The statement that the maps $\Lambda^l_{i,j}$ are linearly independent is equivalent to the statement that any $n_1^2+ \dots + n_k^2$ complex numbers $(\la^l_{i,j})_{1 \leq i,j \leq n_l}^{1 \leq l \leq k}$ may be obtained as the images of the maps $\Lambda^l_{i,j}$ simultaneously. To prove the latter statement, simply construct an endomorphisms $F$ such that in the notation of equation \eqref{fs1234} the maps $F_{i,j}^l$ have the required trace. For example, set $F^l_{i,j} = \lambda^l_{i,j}\Id_{W_l}$ for all $l \in \{1, \dots, k\}$ and $i,j \in \{1, \dots, n_l \}$, with all other entries of $F$ vanishing.  By definition, it follows that $\Lambda^l_{i,j}(F) = \la^l_{i,j}$ for all $l \in \{1, \dots, k\}$ and $i,j \in \{1, \dots, n_l\}$. This proves the lemma.
\end{proof}

\begin{remk}\label{almostthre2}
As any element of $\mathcal{J}$ has only nilpotent entries on the diagonal, we conclude that necessarily $\tr(F_{\mathcal{J}}) = 0$. From this we see that
\begin{equation}
\tr(F) = \tr(F_{\mathcal{J}}+F_{\mathcal{D}}) = \tr(F_{\mathcal{D}}) = \sum_{l=1}^k \dim(W_l)\tr(\Lambda^l(F))\, .
\end{equation}
More generally, it follows from Lemma \ref{knip} that a decomposition of $S$ (as vector spaces) is given by
\begin{equation}\label{Sdecomp290q234}
S \cong  \bigoplus_{l=1}^k (W_l^{n_l} \cap S)\, 
\end{equation}
with 
\begin{equation}\label{Scomponents238736q732}
W_i^{n_i} \cap S = (W_i \cap S) \oplus \dots \oplus (W_i \cap S) \, (n_i \text{ times})\, .
\end{equation}
Here we have identified $S$ with its image under the isomorphism of the decomposition in \eqref{decomp290q234}. It follows from the proof of Lemma \ref{knip} that projections for the decomposition given by \eqref{Sdecomp290q234} and \eqref{Scomponents238736q732} are given by $ P^l_j|_S: S \rightarrow W_l \cap S \subset W_l^{n_l} \cap S$ ($j$th copy) for $l \in \{1, \dots, k\}$ and $j \in \{1, \dots, n_l\}$. Moreover, as every endomorphism respects $S$, we may write
\begin{equation}
P^l_i|_S\circ F|_S \circ P^l_j|_S = (P^l_i\circ F \circ P^l_j)|_S = F^{l}_{i,j}|_S
\end{equation}
for all $l \in \{1, \dots, k\}$ and $i,j \in \{1, \dots, n_l\}$. In conclusion, we have
\begin{equation}
F|_S  = \begin{pmatrix*}[c]
 \ddots & & & & {\color{blue}F_{\bullet,\bullet}|_S }\\
 & F^{l}_{1,1}|_S& \dots & F^{l}_{1,n_l}|_S &  \\
& \vdots & & \vdots &   \\
& F^{l}_{n_l,1}|_S & \dots & F^{l}_{n_l,n_l}|_S&  \\
 {\color{blue}F_{\bullet,\bullet}|_S }& & & & \ddots\\
\end{pmatrix*}  \, .
\end{equation}
More importantly, it follows that
\begin{equation}
F_{\mathcal{J}} |_S  = \begin{pmatrix*}[c]
 \ddots & & & & {\color{blue}F_{\bullet,\bullet}|_S }\\
 & N^{l}_{1,1}|_S& \dots & N^{l}_{1,n_l}|_S &  \\
& \vdots & & \vdots &   \\
& N^{l}_{n_l,1}|_S & \dots & N^{l}_{n_l,n_l}|_S&  \\
 {\color{blue}F_{\bullet,\bullet}|_S }& & & & \ddots\\
\end{pmatrix*}  
\end{equation} 
and
\begin{equation}
F_{\mathcal{D}} |_S  = \begin{pmatrix*}[c]
 \ddots & & & & {\color{blue} 0 }\\
 & \la^{l}_{1,1}\Id_{W_l \cap S}& \dots & \la^{l}_{1,n_l}\Id_{W_l \cap S} &  \\
& \vdots & & \vdots &   \\
& \la^{l}_{n_l,1}\Id_{W_l \cap S} & \dots & \la^{l}_{n_l,n_l}\Id_{W_l \cap S}&  \\
 {\color{blue} 0 }& & & & \ddots\\
\end{pmatrix*}  \, .
\end{equation} 
We conclude that again $\tr(F_{\mathcal{J}} |_S) = 0$. Hence, we see that 
 \begin{equation}
\tr(F|_S) = \tr(F_{\mathcal{J}}|_S+F_{\mathcal{D}}|_S) = \tr(F_{\mathcal{D}}|_S) = \sum_{l=1}^k \dim(W_l \cap S)\tr(\Lambda^l(F))\, .
\end{equation} 
We collect these results in the proposition below. $\hfill \triangle$
\end{remk}

\begin{prop}\label{steponee33}
For any robust space $S \subset X$ there exist numbers $m^S_1, \dots m^S_k \in \N_{\geq 0}$ such that for all $F \in \End(W)$ we have
 \begin{equation}
\tr(F|_S) = \sum_{l=1}^k m^S_l \tr(\Lambda^l(F))\, .
\end{equation} 
These numbers moreover satisfy 
 \begin{equation}
\sum_{l=1}^k m^S_ln_l = \dim(S)\, ,
\end{equation} 
and in the case of $S=X$ we have $m^X_l > 0$ for all $l \in \{1, \dots, k\}$.
\end{prop}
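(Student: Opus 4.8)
The plan is to recognise that Proposition~\ref{steponee33} simply packages the computations already carried out in Remark~\ref{almostthre2} together with Lemmas~\ref{knip} and~\ref{plak}, so the proof is a matter of making the bookkeeping explicit. First I would fix a decomposition of $X$ into isotypic components $X \cong \bigoplus_{l=1}^k W_l^{n_l}$ as in \eqref{decomp290q234}, with the $W_l$ pairwise non-isomorphic indecomposables, and \emph{define} $m^S_l := \dim(W_l \cap S)$, where $W_l \cap S$ denotes the intersection of $S$ with one fixed copy of $W_l$ sitting inside $X$. By Lemma~\ref{plak} any two copies of $W_l$ are isomorphic complementable sub-representations, so this dimension does not depend on the chosen copy, and the $m^S_l$ are well-defined elements of $\N_{\geq 0}$.

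Next I would derive the trace formula. Writing $F = F_{\mathcal{J}} + F_{\mathcal{D}}$ for the splitting $\End(X) = \mathcal{J} \oplus \mathcal{D}$, both summands are again endomorphisms of $X$ and hence preserve $S$ by robustness, so $F|_S = F_{\mathcal{J}}|_S + F_{\mathcal{D}}|_S$. With respect to the decomposition $S \cong \bigoplus_l (W_l \cap S)^{n_l}$ supplied by Lemma~\ref{knip}, the diagonal blocks of $F_{\mathcal{J}}|_S$ are restrictions of the nilpotent maps $N^l_{i,i} \in \End(W_l)$ to the ($N^l_{i,i}$-invariant) subspace $W_l \cap S$, hence nilpotent, so $\tr(F_{\mathcal{J}}|_S) = 0$; the diagonal blocks of $F_{\mathcal{D}}|_S$ are $\la^l_{i,i}\,\Id_{W_l\cap S}$ with $\la^l_{i,i} = (\Lambda^l(F))_{i,i}$. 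Taking traces gives $\tr(F|_S) = \sum_l \dim(W_l\cap S)\sum_{i=1}^{n_l}\la^l_{i,i} = \sum_{l=1}^k m^S_l\,\tr(\Lambda^l(F))$, which is the claimed identity.

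For the dimension count I would apply Lemma~\ref{knip} to the decomposition of $X$ into the $n_1 + \dots + n_k$ indecomposables $W_l$; it yields $S \cong \bigoplus_{l=1}^k \bigoplus_{j=1}^{n_l}(W_l\cap S)$, where the $j$-th summand is the intersection of $S$ with the $j$-th copy of $W_l$, each of dimension $m^S_l$ by Lemma~\ref{plak}. Summing, $\dim(S) = \sum_l n_l m^S_l$. Finally, when $S = X$ we have $W_l \cap X = W_l$, so $m^X_l = \dim(W_l) \geq 1$ because each $W_l$ is a nonzero representation. I do not expect a genuine obstacle: the only delicate points are observing that the restriction of a nilpotent endomorphism to an invariant subspace stays nilpotent (this is what makes $\tr(F_{\mathcal{J}}|_S)$ vanish) and checking that the single integer $m^S_l := \dim(W_l\cap S)$ is simultaneously the coefficient in the trace formula and in the dimension identity — i.e., that the two occurrences of $\dim(W_l\cap S)$ in Remark~\ref{almostthre2} really are the same number, which is precisely the content of Lemma~\ref{plak}.
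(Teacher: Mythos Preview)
Your proposal is correct and follows exactly the approach the paper takes: the paper's proof is the single sentence ``This follows directly from Remark~\ref{almostthre2} by setting $m^S_l := \dim(W_l \cap S)$,'' and you have faithfully unpacked the computations of that remark, together with the appeals to Lemmas~\ref{knip} and~\ref{plak} that underlie it. Your explicit invocation of Lemma~\ref{plak} to justify that $\dim(W_l\cap S)$ is independent of the chosen copy, and your verification of the dimension identity and of $m^X_l=\dim(W_l)>0$, are exactly the bookkeeping the paper leaves implicit in Remark~\ref{almostthre2}.
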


\begin{proof}
This follows directly from Remark \ref{almostthre2} by setting $m^S_l := \dim(W_l \cap S)$.
\end{proof}
\noindent Next, we prove multiplicity of the multipliers.
\begin{prop}\label{multiplicative}
Given $l \in \{1, \dots, k\}$ we have $\Lambda^l(F\circ G) = \Lambda^l(F)\Lambda^l(G)$ for all $F, G \in \End(X)$. 
\end{prop}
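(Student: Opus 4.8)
The plan is to exploit the block structure described in Remark~\ref{almostthre} together with the ideal property of $\mathcal{J}$ from Lemma~\ref{endoindecomp}. First I would fix the decomposition $X \cong W_1^{n_1}\oplus\dots\oplus W_k^{n_k}$ into isotypic components and recall that, for $F\in\End(X)$, the entry $F^l_{i,j}=P^l_i\circ F\circ P^l_j$ decomposes uniquely as $\la^l_{i,j}\Id_{W_l}+N^l_{i,j}$ with $N^l_{i,j}\in\nil(W_l)$, and that $\Lambda^l_{i,j}(F)=\la^l_{i,j}$. The key computation is to write down the $(i,j)$ entry of $F\circ G$ restricted to the $l$-th isotypic block. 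For $F,G\in\End(X)$ we have
\begin{equation}
P^l_i\circ (F\circ G)\circ P^l_j = \sum_{r} (P^l_i\circ F\circ P^r_{\bullet})\circ(P^{r}_{\bullet}\circ G\circ P^l_j)\, ,
\end{equation}
summing over all indecomposable summands of the decomposition~\eqref{decos6}. The crucial point is that summands $W_r$ not isomorphic to $W_l$ contribute only terms that are compositions of homomorphisms between non-isomorphic indecomposables, hence nilpotent by the Fitting Lemma~\ref{fitting}, and so contribute nothing to the trace modulo $\nil(W_l)$. Thus modulo $\nil(W_l)$ the sum collapses to $\sum_{r=1}^{n_l} F^l_{i,r}\circ G^l_{r,j}$.

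Next I would use Remark~\ref{remkiso}: the assignment $[\la\Id_{W_l}+N]\mapsto\la$ is an algebra isomorphism $\End(W_l)/\nil(W_l)\to\C$. Applying it to $\sum_{r=1}^{n_l} F^l_{i,r}\circ G^l_{r,j} = \sum_r (\la^l_{i,r}\Id_{W_l}+N^l_{i,r})(\la^l_{r,j}\Id_{W_l}+N^l_{r,j})$, every cross term involving an $N$ lands in the ideal $\nil(W_l)$, so the image is $\sum_{r=1}^{n_l}\la^l_{i,r}\la^l_{r,j}$. On the other hand, the image of $P^l_i\circ(F\circ G)\circ P^l_j$ under the same isomorphism is by definition $\Lambda^l_{i,j}(F\circ G)$ (this is precisely $\tr(\cdot)/\dim(W_l)$ applied to the diagonal-scalar part). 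Hence $\Lambda^l_{i,j}(F\circ G)=\sum_{r=1}^{n_l}\Lambda^l_{i,r}(F)\Lambda^l_{r,j}(G)$, which is exactly the $(i,j)$ entry of the matrix product $\Lambda^l(F)\Lambda^l(G)$. Since this holds for all $i,j\in\{1,\dots,n_l\}$, we conclude $\Lambda^l(F\circ G)=\Lambda^l(F)\Lambda^l(G)$.

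The main obstacle is the careful bookkeeping in the first step: one has to be sure that when expanding $P^l_i\circ(F\circ G)\circ P^l_j=\sum_r P^l_i\circ F\circ P_r\circ G\circ P^l_j$ over \emph{all} indecomposable summands $W_r$ (not just the isotypic component $W_l^{n_l}$), the contributions from summands $W_r\not\cong W_l$ are genuinely nilpotent. This is where Lemma~\ref{fitting} enters: $P^l_i\circ F\circ P_r$ is a homomorphism $W_r\to W_l$ and $P_r\circ G\circ P^l_j$ is a homomorphism $W_l\to W_r$, and their composite in $\End(W_l)$ is nilpotent because $W_l\not\cong W_r$; summing finitely many nilpotents in the ideal $\nil(W_l)$ stays in $\nil(W_l)$ by Lemma~\ref{endoindecomp}. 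Once this is pinned down, the rest is a routine application of the algebra isomorphism of Remark~\ref{remkiso} and is essentially the statement that the "diagonal-scalar-part" map $F\mapsto(\Lambda^l_{i,j}(F))_{i,j}$ is a (non-unital on the nose, but multiplicative) algebra homomorphism onto $\mathrm{Mat}_{n_l}(\C)$.
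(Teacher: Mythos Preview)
Your argument is correct and uses the same ingredients as the paper: the Fitting Lemma to kill cross terms between non-isomorphic indecomposables, and the ideal property of $\nil(W_l)$ to absorb the remaining nilpotent pieces. The only organisational difference is that the paper first abstracts your block-by-block computation into the global statement that $\mathcal{J}\subset\End(X)$ is a two-sided ideal (Lemma~\ref{jacobideal}); once that is in hand, the proof reduces to the one-line observation that $FG = F_{\mathcal{D}}G_{\mathcal{D}} + (\text{terms in }\mathcal{J})$, so $(FG)_{\mathcal{D}}=F_{\mathcal{D}}G_{\mathcal{D}}$, which is exactly $\Lambda^l(FG)=\Lambda^l(F)\Lambda^l(G)$ for each $l$. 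Your entrywise version is the same computation unpacked, and is equally valid.
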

\noindent To prove Proposition \ref{multiplicative} we first need the following important lemma. 
\begin{lem}\label{jacobideal}
The space $\mathcal{J}$ is a two-sided ideal of $\End(X)$. That is, for any $N \in \mathcal{J}$ and $F \in  \End(X)$ we have $F\circ N, N \circ F \in \mathcal{J}$.\end{lem}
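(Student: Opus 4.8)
The plan is to reduce the statement to a computation with the block-matrix form of Remark \ref{almostthre} and the fact, from Lemma \ref{endoindecomp}, that $\nil(W)$ is a two-sided ideal inside $\End(W)$ for any indecomposable $W$, together with the Fitting Lemma \ref{fitting}. Fix the decomposition $X \cong W_1 \oplus \dots \oplus W_s$ into indecomposables and the associated projections $P_i : X \to W_i$. Recall that $F \in \mathcal{J}$ precisely when $P_i \circ F \circ P_j$ is nilpotent as an element of $\End(W_i)$ for every pair $i,j$ with $W_i = W_j$ (i.e. with $W_i \cong W_j$; here I will use ``$=$'' loosely as the paper does, meaning the two summands are isomorphic copies). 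So to show $F \circ N \in \mathcal{J}$ for $N \in \mathcal{J}$ and $F \in \End(X)$, I must show $P_i \circ F \circ N \circ P_j$ is nilpotent whenever $W_i \cong W_j$.

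First I would insert the resolution of the identity $\Id_X = \sum_{r=1}^s P_r$ between $F$ and $N$, writing
\begin{equation}
P_i \circ F \circ N \circ P_j = \sum_{r=1}^s (P_i \circ F \circ P_r)\circ (P_r \circ N \circ P_j)\, .
\end{equation}
Now fix a summand index $r$ and consider the term $(P_i \circ F \circ P_r)\circ (P_r \circ N \circ P_j)$, viewed as a map $W_j \to W_i$ and then restricted so that we regard the whole thing as an endomorphism of $W_i$ (using $W_i \cong W_j$). I would split into two cases according to whether $W_r \cong W_i$ or not. If $W_r \cong W_i \cong W_j$, then $P_r \circ N \circ P_j$ is an element of $\Hom(W_j, W_r)$ between isomorphic indecomposables which, after identifying $W_j$ with $W_r$, is nilpotent since $N \in \mathcal{J}$; composing a nilpotent map in $\End(W_r)\cong\End(W_i)$ with the arbitrary homomorphism $P_i\circ F\circ P_r \in \Hom(W_r,W_i)\cong\End(W_i)$ lands in $\nil(W_i)$ because $\nil(W_i)$ is a two-sided ideal of $\End(W_i)$ by Lemma \ref{endoindecomp}. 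If instead $W_r \not\cong W_i$, then $W_r \not\cong W_j$ as well, and the composite $(P_i \circ F \circ P_r)\circ(P_r \circ N \circ P_j)$ factors through the non-isomorphic indecomposable $W_r$; by the Fitting Lemma \ref{fitting} any endomorphism of $W_i$ that factors through a non-isomorphic indecomposable is nilpotent. (More carefully: composing $P_r\circ N\circ P_j : W_j\to W_r$ with $P_i\circ F\circ P_r: W_r\to W_i$ and identifying $W_i\cong W_j$ gives an endomorphism of $W_i$ which, precomposed and postcomposed appropriately, yields on $W_r$ a map that must be nilpotent since $W_r\not\cong W_i$; hence the $W_i$-endomorphism is nilpotent too. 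I would phrase this using the last sentence of Lemma \ref{fitting} directly.) In both cases each summand is nilpotent, and since $\nil(W_i)$ is closed under addition (Lemma \ref{endoindecomp} again), the full sum $P_i \circ F \circ N \circ P_j$ is nilpotent. Hence $F \circ N \in \mathcal{J}$. The argument for $N \circ F \in \mathcal{J}$ is symmetric: insert $\sum_r P_r$ between $N$ and $F$ and run the same case analysis, using that $\nil(W)$ is a \emph{two}-sided ideal so the nilpotent factor on the left also absorbs an arbitrary homomorphism on the right.

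The main obstacle I anticipate is purely bookkeeping: being careful about the identifications $W_i \cong W_j$ when ``transporting'' a homomorphism between distinct but isomorphic summands into an honest endomorphism, so that ``nilpotent'' is unambiguous, and making sure the cross terms (those factoring through a third summand $W_r$) are handled uniformly whether or not $W_r$ is isomorphic to $W_i$. Once the case split above is set up cleanly, each case is a one-line appeal to either the ideal property of $\nil(W_i)$ (Lemma \ref{endoindecomp}) or to the Fitting Lemma \ref{fitting}, and closure of $\nil(W_i)$ under sums finishes it. No genuinely hard estimate is needed; the content is entirely in organizing the block decomposition correctly.
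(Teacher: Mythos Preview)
Your proposal is correct and follows essentially the same argument as the paper: insert $\sum_r P_r$ to expand $(FN)_{i,j}$ as a sum over intermediate summands, then split into the cases $W_r \cong W_i$ (use that $N_{r,j}$ is nilpotent and $\nil(W_i)$ is an ideal) and $W_r \not\cong W_i$ (use the Fitting Lemma), and conclude by closure of $\nil(W_i)$ under sums. The paper's proof is slightly terser in its handling of the identifications between isomorphic summands, but the structure and the key appeals to Lemma~\ref{endoindecomp} and Lemma~\ref{fitting} are identical.
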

\noindent The proof of Lemma \ref{jacobideal} is essentially the same as that of the analogous statement for representations over the real numbers, provided in \cite{Gensym}. Nevertheless, we give it here for completeness.
\begin{proof}
To show that $\mathcal{J}$ is a two-sided ideal, let us again denote the decomposition of $X$ into indecomposable representations as  
\begin{equation}\label{decosf}
X \cong W_1\oplus W_2 \oplus \dots \oplus W_s \, ,
\end{equation}
where we may now have $W_i = W_j$ for some $i \not= j$. Given $N \in \mathcal{J}$ and $F \in  \End(X)$, we need to show that $(F \circ N)_{i,j}$ is a nilpotent map from $W_j$ to $W_i$ for all $i,j$ such that $W_i = W_j$. To this end, let us simply write $FN:= F \circ N$ and compute:
\begin{equation}
(FN)_{i,j} = \sum_{p = 1}^s F_{i,p}N_{p,j}\, .
\end{equation}
Now, if $W_p = W_i = W_j$ then $N_{p,j}$ is nilpotent (as we have $N \in \mathcal{J}$). Because $\nil(W_i)$ is a two-sided ideal of $\End(W_i)$ (see Lemma \ref{endoindecomp}), we conclude that $F_{i,p}N_{p,j} \in \nil(W_i)$. If on the other hand we have $W_p \not= W_i$, then it follows from Lemma \ref{fitting} that likewise $F_{i,p}N_{p,j} \in \nil(W_i)$. Using again that $\nil(W_i)$ is an ideal of $\End(W_i)$, we conclude that $(FN)_{i,j} \in \nil(W_i)$ whenever $W_i = W_j$. This shows that indeed $FN \in \mathcal{J}$ whenever we have $N \in \mathcal{J}$. The proof for $NF$ goes exactly the same, which concludes the proof.
\end{proof}

\begin{proof}[Proof of Proposition \ref{multiplicative}]
As the different $\Lambda^l(F)$ simply represent the blocks of $F_{\mathcal{D}}$, it suffices to show that $F_{\mathcal{D}}G_{\mathcal{D}} = (FG)_{\mathcal{D}}$ for all $F, G \in \End(X)$. We first note that $F_{\mathcal{D}}G_{\mathcal{D}}$ is again an element of $\mathcal{D}$, as this product is again a block diagonal endomorphism with only multiples of the identity as its entries. Next, we have 
\begin{equation}
FG = (F_{\mathcal{D}} + F_{\mathcal{J}})(G_{\mathcal{D}} + G_{\mathcal{J}}) = F_{\mathcal{D}} G_{\mathcal{D}} + F_{\mathcal{D}} G_{\mathcal{J}} + F_{\mathcal{J}} G_{\mathcal{D}}  + F_{\mathcal{J} }G_{\mathcal{J} } \, .
\end{equation}
Finally, we note that $F_{\mathcal{D}} G_{\mathcal{J}} + F_{\mathcal{J}} G_{\mathcal{D}}  + F_{\mathcal{J} }G_{\mathcal{J} } \in \mathcal{J}$, as $\mathcal{J}$ is an ideal by Lemma \ref{jacobideal}. This means that $FG$ decomposes as
\begin{equation}
FG = [FG]_{\mathcal{D}} + [FG]_{\mathcal{J}} = [F_{\mathcal{D}} G_{\mathcal{D}}] + [F_{\mathcal{D}} G_{\mathcal{J}} + F_{\mathcal{J}} G_{\mathcal{D}}  + F_{\mathcal{J} }G_{\mathcal{J} }] \, .
\end{equation}
In particular, we see that indeed $F_{\mathcal{D}}G_{\mathcal{D}} = (FG)_{\mathcal{D}}$. This proves the proposition.
\end{proof}
\noindent As a warm-up to the proof of Theorem $\ref{main1}$, we now show how the results of propositions \ref{steponee33} and \ref{multiplicative} allow us to describe the eigenvalues of an endomorphism $F \in \End(X)$ and its restriction $F|_S$.
\begin{prop}\label{jacobsonforeigenvalues}
Let $m^S_1$ up to $m^S_k$ be as in Proposition \ref{steponee33} (so that we have $m^S_l:= \dim(W_l \cap S)$ for all $l \in \{1, \dots, k\}$). Counted with algebraic multiplicity, the eigenvalues of $F|_S$ are given by those of $\Lambda^1(F)$ ($m^S_1$ times), together with those of $\Lambda^2(F)$ ($m^S_2$ times), up to those of $\Lambda^k(F)$ ($m^S_k$ times).
\end{prop}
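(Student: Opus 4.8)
The plan is to leverage the decomposition of $F|_S$ into the block form displayed in Remark \ref{almostthre2}, and to show that the "off-isotypic" blue blocks $F_{\bullet,\bullet}|_S$ together with the nilpotent blocks $N^l_{i,j}|_S$ do not affect the spectrum, so that only the scalar blocks $\la^l_{i,j}\Id_{W_l\cap S}$ remain. Concretely, I would argue that $F|_S = F_{\mathcal{D}}|_S + F_{\mathcal{J}}|_S$ where $F_{\mathcal{D}}|_S$ is block-diagonal over the isotypic components and, restricted to the $l$-th isotypic component $W_l^{n_l}\cap S$, equals $\Lambda^l(F)\otimes \Id_{W_l\cap S}$ (a Kronecker-type product), whose eigenvalues are exactly those of the $n_l\times n_l$ matrix $\Lambda^l(F)$, each repeated $\dim(W_l\cap S) = m^S_l$ times. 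This already gives the claimed multiset as the spectrum of $F_{\mathcal{D}}|_S$; the remaining work is to show $F|_S$ and $F_{\mathcal{D}}|_S$ have the same spectrum.

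For that last point, the clean approach is to show that the characteristic polynomials agree. Since $\End(X)$ preserves $S$, both $F_{\mathcal{D}}$ and $F_{\mathcal{J}}$ restrict to endomorphisms of $S$, and by Lemma \ref{jacobideal} the space $\mathcal{J}$ is a two-sided ideal; restricting to $S$, the maps $\{G|_S : G\in\mathcal{J}\}$ form a two-sided ideal $\mathcal{J}_S$ inside $\End(S)$ (here I use that restriction is an algebra homomorphism $\End(X)\to\End(S)$, which is immediate since composition is preserved). The key algebraic fact I need is that $\mathcal{J}_S$ consists of nilpotent endomorphisms of $S$ — equivalently, $\mathcal{J}_S$ is a nil ideal — and hence is contained in the (Jacobson) radical of the finite-dimensional algebra generated by $F_{\mathcal{D}}|_S$ and $F_{\mathcal{J}}|_S$. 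Then $F|_S \equiv F_{\mathcal{D}}|_S$ modulo a nilpotent two-sided ideal, and two elements of a finite-dimensional algebra that differ by an element of a nil ideal have the same characteristic polynomial on any module — in particular on $S$ itself — because modulo the radical the algebra is semisimple and the characteristic polynomial only sees the semisimple part. Alternatively, and more elementarily, I can simply invoke that if $A$ and $B=A+J$ are commuting-up-to-radical and $J$ lies in a nil ideal, one can filter $S$ by a flag adapted to the block structure in which $F_{\mathcal{J}}|_S$ is strictly block-upper-triangular with nilpotent diagonal blocks while $F_{\mathcal{D}}|_S$ is block-diagonal; then $F|_S$ is block-upper-triangular with the same diagonal blocks as $F_{\mathcal{D}}|_S$, so they share a characteristic polynomial.

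**The main obstacle** I anticipate is making rigorous the claim that $\{G|_S : G \in \mathcal{J}\}$ acts nilpotently on $S$, or equivalently that there is a single flag of subspaces of $S$ with respect to which every $F_{\mathcal{J}}|_S$ is strictly upper-triangular in the appropriate block sense. On $X$ itself this is reasonably visible: order the indecomposable summands in \eqref{decosf} so that isomorphic ones are grouped, and within each isotypic block choose, using Lemma \ref{endoindecomp} and \ref{fitting}, a basis in which every entry of $F_{\mathcal{J}}$ between isomorphic components is strictly nilpotent and entries between non-isomorphic components are nilpotent; one then checks that $\mathcal{J}$ is a nil algebra because it is a nil ideal (Lemma \ref{jacobideal}) in a finite-dimensional algebra, hence contained in the radical, hence nilpotent as an algebra. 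The subtlety is that this flag must be chosen to be $S$-compatible, i.e. each subspace of the flag should intersect $S$ in the "expected" dimension; this is exactly what Lemma \ref{knip} and Lemma \ref{plak} are designed to give, since they guarantee $S$ decomposes compatibly with the isotypic and indecomposable decompositions of $X$, with $\dim(W_l\cap S)$ the same across all copies of $W_l$. So I would assemble the proof as: (i) $F_{\mathcal{D}}|_S$ has the stated spectrum by the Kronecker-product computation on each isotypic piece; (ii) $F_{\mathcal{J}}|_S$ lies in a nilpotent ideal of $\End(S)$ by Lemma \ref{jacobideal} restricted to $S$; (iii) hence $\chi_{F|_S} = \chi_{F_{\mathcal{D}}|_S}$; combine.
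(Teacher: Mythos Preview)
Your approach is correct in outline but takes a genuinely different route from the paper, and there is one soft spot worth flagging.

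\medskip

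\textbf{What the paper does.} The paper's proof is short and purely trace-based. It builds the block-diagonal comparison matrix $C$ (consisting of $m^S_l$ copies of each $\Lambda^l(F)$) and verifies $\tr\bigl((F|_S)^n\bigr) = \tr(C^n)$ for every $n\in\N$. For this it applies Proposition~\ref{steponee33} to $F^n$ in place of $F$, and uses Proposition~\ref{multiplicative} to rewrite $(\Lambda^l(F))^n = \Lambda^l(F^n)$. Lemma~\ref{sameeig} (equality of all power traces implies equality of spectra) then finishes the argument. No mention of radicals, flags, or composition series is needed.

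\medskip

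\textbf{How your route compares.} You instead argue that $F|_S$ and $F_{\mathcal D}|_S$ differ by an element of a nilpotent two-sided ideal and hence share a characteristic polynomial; the Kronecker computation on $F_{\mathcal D}|_S$ then gives the claimed spectrum. This is a valid and conceptually appealing alternative: once one knows $\mathcal J$ is the Jacobson radical of $\End(X)$, the composition-series argument explains \emph{structurally} why the off-isotypic and nilpotent blocks cannot affect eigenvalues. The paper's method is more computational but entirely self-contained given the two propositions already in hand.

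\medskip

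\textbf{The soft spot.} You assert that $\mathcal J$ is a nil ideal ``by Lemma~\ref{jacobideal}'', but that lemma only shows $\mathcal J$ is a two-sided ideal; it does not show every element of $\mathcal J$ is nilpotent. (A two-sided ideal with semisimple quotient need not be the radical.) The nilpotency \emph{is} true, but the cleanest justification uses exactly the ingredients the paper deploys directly: for $N\in\mathcal J$ one has $\Lambda^l(N)=0$ for all $l$, hence $\Lambda^l(N^m)=\Lambda^l(N)^m=0$ by Proposition~\ref{multiplicative}, hence $\tr(N^m)=0$ for all $m$ by Proposition~\ref{steponee33}, hence $N$ is nilpotent by Lemma~\ref{sameeig}. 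So your route ends up passing through the same three results, just packaged differently. Your second suggested alternative (an explicit $S$-compatible flag making $F_{\mathcal J}|_S$ strictly upper-triangular) would be harder to carry out without first establishing the radical property anyway.
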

\noindent The main ingredient of the proof is the following important lemma.
\begin{lem}\label{sameeig}
Suppose we are given a finite dimensional (real or complex) vector space $V$ and two linear maps $A,B \in \lin(V,V)$. If we have $\tr(A^n) = \tr(B^n)$ for all $n \in \N$, then the eigenvalues of $A$ and $B$, counted with algebraic multiplicity, coincide.
\end{lem}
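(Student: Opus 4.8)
The plan is to prove Lemma \ref{sameeig} via the classical connection between power sums and elementary symmetric functions, i.e. Newton's identities. First I would reduce to the complex case: the eigenvalues of a real matrix are defined as the roots of its characteristic polynomial over $\C$, and the traces $\tr(A^n)$ are the same whether computed over $\R$ or $\C$, so we may assume $V$ is a complex vector space. Let $\alpha_1, \dots, \alpha_d$ be the eigenvalues of $A$ and $\beta_1, \dots, \beta_d$ those of $B$, each listed with algebraic multiplicity, where $d = \dim(V)$. Since the eigenvalues of $A^n$ are exactly $\alpha_1^n, \dots, \alpha_d^n$ (again with algebraic multiplicity, which one sees from the Jordan normal form of $A$), we have
\begin{equation}
p_n(\alpha) := \sum_{i=1}^d \alpha_i^n = \tr(A^n) = \tr(B^n) = \sum_{i=1}^d \beta_i^n =: p_n(\beta)
\end{equation}
for every $n \in \N$. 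In particular this holds for $n = 1, \dots, d$.

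Next I would invoke Newton's identities, which express the elementary symmetric polynomials $e_1, \dots, e_d$ of $d$ variables as polynomials (with rational, in fact integer-over-factorial, coefficients) in the power sums $p_1, \dots, p_d$. Concretely, $e_1 = p_1$, $2 e_2 = e_1 p_1 - p_2$, and in general $k\, e_k = \sum_{j=1}^k (-1)^{j-1} e_{k-j}\, p_j$, which can be solved recursively for $e_k$ in terms of $p_1, \dots, p_k$ since we are working over a field of characteristic zero. Applying this to both tuples and using $p_n(\alpha) = p_n(\beta)$ for $n \le d$, we conclude $e_k(\alpha) = e_k(\beta)$ for all $k = 1, \dots, d$. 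But $e_k(\alpha)$ is (up to sign) the coefficient of the characteristic polynomial of $A$, namely $\det(tI - A) = \prod_{i=1}^d (t - \alpha_i) = \sum_{k=0}^d (-1)^k e_k(\alpha) t^{d-k}$, and likewise for $B$. Hence $A$ and $B$ have the same characteristic polynomial, so their multisets of eigenvalues (with algebraic multiplicity) coincide.

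I do not anticipate a serious obstacle here; the only points requiring a little care are (i) justifying that the eigenvalues of $A^n$ are the $n$th powers of the eigenvalues of $A$ with the correct algebraic multiplicities, which follows by passing to the Jordan form (a Jordan block of $A$ for eigenvalue $\alpha$ of size $s$ contributes, under raising to the $n$th power, a generalized eigenspace of dimension $s$ for the eigenvalue $\alpha^n$), and (ii) noting that Newton's identities are invertible only because $\Q \subset \C$, which is why the characteristic-zero hypothesis (implicit in working over $\R$ or $\C$) matters. An alternative, essentially equivalent route would be to argue by contradiction using a Vandermonde-type argument: if the eigenvalue multisets differed, pick a value $\lambda_0$ attaining different multiplicities and use finitely many of the identities $p_n(\alpha) = p_n(\beta)$ together with linear independence of the functions $n \mapsto \lambda^n$ for distinct $\lambda$ to derive a contradiction; but the Newton's identities approach is cleaner and I would go with that.
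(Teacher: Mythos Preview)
Your proof is correct. The paper does not actually prove this lemma; it simply states that the result is well known and cites \cite{Gensym} for a proof, so there is no in-paper argument to compare against. Your Newton's-identities approach is one of the standard proofs of this fact, and the care you take with the real versus complex reduction and with the multiplicity of eigenvalues of $A^n$ is appropriate.
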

\noindent Lemma \ref{sameeig} is a well-known result that is proven in for example \cite{Gensym}.

\begin{proof}[Proof of Proposition \ref{jacobsonforeigenvalues}]
We define a matrix $C$ of size $\dim(S)$ as follows: $C$ has a block-diagonal consisting of $m^S_1$ times the matrix $\Lambda_1(F)$, $m^S_2$ times the matrix $\Lambda_2(F)$, up to $m^S_k$ times the matrix $\Lambda_k(F)$. All the other entries vanish. What we need to prove is that $F|_S$ and $C$ have the same eigenvalues, counted with algebraic multiplicity. In view of Lemma \ref{sameeig}, it suffices to show that $\tr([F|_S]^n) = \tr([C]^n)$ for all $n \in \N$. We first note that the matrix $C^n$ has a block-diagonal consisting of $m^S_1$ times the matrix $(\Lambda_1(F))^n$, up to $m^S_k$ times the matrix $(\Lambda_k(F))^n$. By Proposition \ref{multiplicative} we furthermore have $(\Lambda_l(F))^n = (\Lambda_1(F^n))$, so that the block-diagonal of $C^n$ is given by $m^S_1$ times the matrix $\Lambda_1(F^n)$, up to $m^S_k$ times the matrix $\Lambda_k(F^n)$. In particular, we conclude that 
 \begin{equation}
\tr(C^n) = \sum_{l=1}^k m^S_l \tr(\Lambda^l(F^n))\, ,
\end{equation} 
for all $n \in \N$. By Proposition \ref{steponee33} we also have 
 \begin{equation}
\tr([F|_S]^n) = \tr(F^n|_S)= \sum_{l=1}^k m^S_l \tr(\Lambda^l(F^n))\, .
\end{equation} 
Hence, we conclude that indeed $\tr([F|_S]^n) = \tr([C]^n)$ for all $n \in \N$, so that $C$ and $F|_S$ have the same eigenvalues. This completes the proof.
\end{proof}


\section{Network Multipliers and Constructible Networks} \label{Appendix;  The role of V}
Using the algebraic machinery of the previous sections, we will now start gathering results towards proving the main theorems \ref{main1} and \ref{main2}. Our strategy will be to generalise propositions \ref{steponee33} and \ref{multiplicative} from the previous section to all constructible networks, and to any phase space $V$. Recall that for any complete fundamental network $\mathcal{F} = (\Sigma, \Sigma)$, we have a representation $X := (\C^{\Sigma}, (A_{\sigma})_{\sigma \in \Sigma})$ of $\Sigma$. It follows from Lemma  \ref{symmetryyeah} that $\End(X)$ is exactly equal to the space of linear admissible maps $\Gamma_f$ for the network $\mathcal{F}$. In particular, we may choose a decomposition of $X$ into indecomposable representations, which gives us the representation multipliers $\Lambda^l(\Gamma_f) := (\Lambda^l_{i,j}(\Gamma_f))$ for $l \in \{1, \dots, k\}$ and $i,j \in \{1, \dots, n_l\}$. We will use these to define the network multipliers.

\begin{remk}\label{shifttocoeffff}
As every endomorphism $\Gamma_f: \C^{\Sigma} \rightarrow \C^{\Sigma}$ is determined by the response function $f$, we may view the representation multipliers of $(\C^{\Sigma}, (A_{\sigma})_{\sigma \in \Sigma})$ as linear maps from the space of linear response functions to the space of complex matrices. In other words, we may consider the maps $f \mapsto \Lambda^l(\Gamma_f)$, as well as its coefficients $f \mapsto \Lambda_{i,j}^l(\Gamma_f)$, for all $l \in \{1, \dots, k\}$ and $i,j \in \{1, \dots, n_l\}$. Every response function $f: \C^{\Sigma} \rightarrow \C$ is in turn completely determined by its coefficients $c =(c_{\sigma})_{\sigma \in \Sigma} \in \C^{\Sigma}$, defined by 
\begin{equation}\label{relatedcoeffff1}
f(x) = \sum_{\sigma \in \Sigma} c_{\sigma}x_{\sigma}\,
\end{equation} 
for all $x \in \C^{\Sigma}$. Hence, we get linear maps from the coefficient-space $\C^{\Sigma}$ to the space of complex matrices. We will simply denote these maps by $\Lambda_{i,j}^l: \C^{\Sigma} \rightarrow \C$ and $\Lambda^l: \C^{\Sigma} \rightarrow \C^{n_l \times n_l}$ as well. In other words, we have $\Lambda_{i,j}^l(c) := \Lambda_{i,j}^l(\Gamma_f)$ and $\Lambda^l(c) := \Lambda^l(\Gamma_f)$, where $f$ and $c \in \C^{\Sigma}$ are related by equation \eqref{relatedcoeffff1}. $\hfill \triangle$
\end{remk}

\begin{defi}[Network Multipliers]\label{netwoorkmulti}
For any finite dimensional complex vector space $V$, we extend the maps $\Lambda_{i,j}^l: \C^{\Sigma} \rightarrow \C$ and $\Lambda^l: \C^{\Sigma} \rightarrow \C^{n_l \times n_l}$ of Remark \ref{shifttocoeffff} to maps from $\lin(V,V)^{\Sigma}$ to $\lin(V,V)$ and  $\lin(V,V)^{n_l \times n_l}$, respectively. Here, $\lin(V,V)^{n_l \times n_l}$ denotes the space of $n_l \times n_l$ block matrices with blocks in $\lin(V,V)$. Note that $\lin(V,V)^{\Sigma}$ denotes the coefficient-space of a linear response function $f: V^{\Sigma} \rightarrow V$. More precisely, if $\Lambda_{i,j}^l$ is given by
 \begin{equation}
\Lambda^l_{i,j}(c) = \sum_{\sigma \in \Sigma} a_{i,j}^{l,\sigma} \cdot c_{\sigma}\, ,
\end{equation}
for some $a_{i,j}^{l,\sigma} \in \C$ and with $c \in \C^{\Sigma}$, then we simply define
 \begin{equation}
\Lambda^l_{i,j}(C) = \sum_{\sigma \in \Sigma} a_{i,j}^{l,\sigma} \cdot C_{\sigma}\, ,
\end{equation}
for all $C \in \lin(V,V)^{\Sigma}$. We then set $\Lambda^l(C) := (\Lambda_{i,j}^l(C))$. The formal maps $\Lambda^l$ obtained in this way are called the \textit{network multipliers} of $\Sigma$. They are the maps featured in Theorem \ref{main1}. Note that we only use a decomposition of $\C^{\Sigma}$ (and that we do not look at  $V^{\Sigma}$ for general $V$) to define the maps $\Lambda^l: \lin(V,V)^{\Sigma} \rightarrow \lin(V,V)^{n_l \times n_l}$ for all $V$.
\end{defi}

\subsection{The Trace of an Admissible Map} 
We start by generalising Proposition \ref{steponee33} to admissible maps for all constructible networks. More precisely, the result we want to prove is
\begin{thr}\label{first-half}
Let $\mathcal{F} = (\Sigma, \Sigma)$ be a complete fundamental network, and denote by $\Lambda^l$ for $l \in \{1, \dots, k\}$ the corresponding $n_l \times n_l$ network multipliers found by decomposing $\C^{\Sigma}$. For any constructible network $\mathcal{N} = (N, \Sigma) \in \mathfrak{C}_{\mathcal{F}}$, there exist non-negative integers $m^{\mathcal{N}}_l$, $l \in \{1, \dots, k\}$, such that
\begin{equation}
\tr(\gamma^{\mathcal{N}}_f) = \sum_{l =1}^k m^{\mathcal{N}}_l \tr(\Lambda^l(C)) \, ,
\end{equation}
for any linear response function $f:V^{\Sigma} \rightarrow V$ with coefficients $C \in \lin(V,V)^{\Sigma}$ (and for any finite dimensional complex vector space $V$). The integers $(m^{\mathcal{N}}_l)_{l=1}^k$ furthermore satisfy 
\begin{equation}
\sum_{l =1}^k m^{\mathcal{N}}_l n_l = \# N\, .
\end{equation}
\end{thr}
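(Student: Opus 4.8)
The plan is to reduce everything to the representation-theoretic statement already proved in Proposition \ref{steponee33}, using two structural facts that are available to us: first, that the space of linear $\mathcal{F}$-admissible maps $\Gamma_f$ is exactly $\End(X)$ for $X = (\C^{\Sigma},(A_\sigma)_{\sigma\in\Sigma})$ (Lemma \ref{symmetryyeah}); and second, that every constructible network $\mathcal{N}=(N,\Sigma)\in\mathfrak{C}_{\mathcal{F}}$ arises, via its input networks, from restrictions of $\Gamma_f$ to robust subspaces of $\C^{\Sigma}$ (Lemma \ref{includedyeah} together with the definition of $\mathfrak{C}_{\mathcal{F}}$). The first step is to handle the case $V=\C$ and then bootstrap to general $V$. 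For $V=\C$ I would argue as follows. Write $N=\{p_1,\dots,p_n\}$ and recall from Proposition \ref{traceisfixedloops} that $\tr(\gamma_f^{\mathcal{N}}) = \sum_{\sigma\in\Sigma} c_\sigma\,\#\{p\in N : \sigma(p)=p\}$, a linear functional of the coefficient vector $c$. I would instead organize this by nodes: for each node $p$, the $p$-th diagonal entry of $\gamma_f^{\mathcal{N}}$ equals $c_{e}$ plus contributions, but more usefully, the input network $\mathcal{N}_p$ embeds (via $\theta_p$ of Lemma \ref{includedyeah}) as the robust synchrony space $S_p\subset\C^{\Sigma}$, and $\gamma_f^{p}$ is conjugate to $\Gamma_f|_{S_p}$. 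Summing over a covering of $N$ by input networks is not quite clean because input networks overlap; instead I would use the cleaner route below.

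The cleaner route: take $\bowtie$ to be the balanced equivalence relation on $\Sigma$ whose classes are indexed by the nodes of $\mathcal{N}$ in the sense that the quotient fibration $\mathcal{F}\twoheadrightarrow\mathcal{N}$ — wait, $\mathcal{N}$ need not be a quotient of $\mathcal{F}$. So instead I would proceed node-by-node with a disjoint-union trick. Form the disjoint union $\widehat{\mathcal{N}} := \bigsqcup_{p\in N}\mathcal{N}_p$ of all input networks of $\mathcal{N}$; by Proposition \ref{cloosed} this lies in $\mathfrak{C}_{\mathcal{F}}$, and since each $\mathcal{N}_p$ is a quotient of $\mathcal{F}$, $\widehat{\mathcal{N}}$ is a disjoint union of quotients of $\mathcal{F}$. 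Now $\tr(\gamma_f^{\widehat{\mathcal{N}}}) = \sum_{p\in N}\tr(\gamma_f^{p})$, and $\tr(\gamma_f^{p}) \geq$ the $p$-th diagonal entry of $\gamma_f^{\mathcal{N}}$... this still overcounts. The genuinely clean statement is simpler: I will prove the trace formula for $\mathcal{N}$ directly by exhibiting $\mathcal{N}$ itself, rather than its input networks, in relation to $\C^{\Sigma}$. Since $\gamma_f^{\mathcal{N}}$ is block-diagonalizable in the sense that each row $(\gamma_f^{\mathcal{N}})_p = f\circ\pi_p$ depends only on $\mathcal{N}_p$, and $f\circ\pi_p$ under the embedding $\theta_p$ is the $e$-component of $\Gamma_f|_{S_p}$, we get $(\gamma_f^{\mathcal{N}}(x))_p = (\Gamma_f(\theta_p x))_e$. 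Taking traces: $\tr(\gamma_f^{\mathcal{N}}) = \sum_{p\in N} \langle e\text{-th coord of }\Gamma_f|_{S_p}\rangle$ evaluated appropriately. The key observation is that the linear functional $\Gamma_f \mapsto \tr(\Gamma_f|_{S_p})$ is, by Proposition \ref{steponee33} applied to the robust space $S_p\subset X$, equal to $\sum_{l=1}^k \dim(W_l\cap S_p)\,\tr(\Lambda^l(\Gamma_f))$. But I actually need the sum of single diagonal entries, not the whole trace of the restriction — so instead I use that $\tr(\gamma_f^{\mathcal{N}})$ is itself the trace of $\Gamma_f$ restricted to the robust subspace $S_{\mathcal{N}} := \{X\in\C^{\Sigma} : X_\sigma = X_\tau \text{ whenever } \sigma,\tau \text{ induce the same node-to-node input pattern on } \mathcal{N}\}$ — precisely, I claim $\gamma_f^{\mathcal{N}}$ is conjugate to $\Gamma_f$ restricted to a robust subspace of $\C^{\Sigma'}$ for the fundamental network $\Sigma'$ of $\Sigma_{\mathcal{N}}$, but since $\Sigma_{\mathcal{N}}$ is a quotient monoid of $\Sigma$ one can push down; I would make this precise using Lemma \ref{stillcloded1} and the surjective monoid morphism $\Sigma\to\Sigma_{\mathcal{N}}$. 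Then Proposition \ref{steponee33} gives $m^{\mathcal{N}}_l := \dim(W_l\cap S_{\mathcal{N}})$ and the identity $\sum_l m_l^{\mathcal{N}} n_l = \dim(S_{\mathcal{N}}) = \#N$.

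For the passage from $V=\C$ to general finite-dimensional $V$: fix a basis of $V$, so $\lin(V,V)\cong\C^{d\times d}$ with $d=\dim V$. The map $\gamma_f^{\mathcal{N}}$ on $V^N$ is, after reshuffling coordinates, the "entrywise tensor" of the scalar template with $\lin(V,V)$-entries; equivalently each scalar admissible map with coefficients $c$ gets replaced by the block map with coefficients $C$, and the trace is additive over matrix entries so that $\tr(\gamma_f^{\mathcal{N}})$ as an element of $\lin(V,V)$ (or its $\C$-trace) is obtained from the scalar formula by the substitution $c_\sigma\mapsto C_\sigma$. Since all the identities in the $V=\C$ case are $\C$-linear identities in the coefficients with fixed integer/complex constants, they remain valid after this substitution, with the same $m_l^{\mathcal{N}}$ — this is exactly the content of Definition \ref{netwoorkmulti}, where $\Lambda^l$ on $\lin(V,V)^\Sigma$ is defined by reusing the same coefficients $a_{i,j}^{l,\sigma}$. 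Thus $\tr(\gamma_f^{\mathcal{N}}) = \sum_{l=1}^k m_l^{\mathcal{N}}\tr(\Lambda^l(C))$ for all $V$. The main obstacle, and the step I would spend the most care on, is making rigorous the claim that $\gamma_f^{\mathcal{N}}$ is conjugate to the restriction of an admissible map of the completed/fundamental network to a robust subspace — i.e. correctly identifying the robust subspace $S_{\mathcal{N}}$ and verifying robustness — since $\mathcal{N}$ is in general not itself a quotient of $\mathcal{F}$ but only covered by such quotients (Example \ref{exam5}); here the disjoint-union-of-input-networks construction of Proposition \ref{cloosed} plus Lemma \ref{includedyeah} is the tool, combined with the bookkeeping that $\tr$ of a block-diagonal restriction splits as the sum over input networks only if one is careful about which diagonal entries are being counted. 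Everything else is routine linear algebra and an appeal to Proposition \ref{steponee33}.
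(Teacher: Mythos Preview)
Your proposal has a genuine gap at the central step. You repeatedly circle around the idea of realising $\gamma_f^{\mathcal{N}}$ as the restriction of $\Gamma_f$ to a single robust subspace $S_{\mathcal{N}}\subset\C^{\Sigma}$ (or of $\C^{\Sigma'}$ for some quotient monoid $\Sigma'$), so that Proposition \ref{steponee33} applies directly with $m_l^{\mathcal{N}}=\dim(W_l\cap S_{\mathcal{N}})$. This cannot work: a constructible network need not be a quotient of $\mathcal{F}$, nor of the fundamental network of $\Sigma_{\mathcal{N}}$. In Example \ref{exam5} the network has five nodes while $\Sigma_{\mathcal{N}}$ has only four elements, so no surjective fibration from any fundamental network onto $\mathcal{N}$ exists. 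Consequently there is no such robust subspace $S_{\mathcal{N}}$, and your proposed formula for $m_l^{\mathcal{N}}$ is undefined. You notice the difficulty yourself (``$\mathcal{N}$ need not be a quotient of $\mathcal{F}$''), but the attempted repairs via disjoint unions or the quotient monoid do not resolve it.

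The paper's proof avoids this entirely. It never tries to embed $\mathcal{N}$ into a single copy of $\mathcal{F}$. Instead it proves a purely combinatorial inclusion--exclusion statement (Lemma \ref{zmodultresult}): in the free $\Z$-module $\Z N$ one can write $T=\sum_{p\in N} s_p T_p$ with \emph{possibly negative} integers $s_p$, where $T$ and $T_p$ are the indicator sums of $N$ and the input-network node sets $N_p$. This immediately gives $\tr(\gamma_f^{\mathcal{N}})=\sum_p s_p\tr(\gamma_f^{p})$ (Proposition \ref{intergerwholenetwork}). Now each $\mathcal{N}_p$ \emph{is} a quotient of $\mathcal{F}$ by definition of constructibility, so Proposition \ref{steponee33} applies to each term and yields $m_l^{\mathcal{N}}=\sum_p s_p m_l^p\in\Z$. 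The non-negativity of $m_l^{\mathcal{N}}$ is then argued separately and indirectly: one picks (via Lemma \ref{indjoepie}) a response function $f_s$ with $\Lambda^l(\Gamma_{f_s})=\delta_{l,s}\Id_{n_s}$, observes via Remark \ref{same--eigenvalues} that every eigenvalue of $\gamma_{f_s}^{\mathcal{N}}$ is among $\{0,1\}$, hence $\tr(\gamma_{f_s}^{\mathcal{N}})=m_s^{\mathcal{N}} n_s\geq 0$. This spectral argument for non-negativity is a key idea you are missing altogether. Your passage from $V=\C$ to general $V$ via the block-trace is fine and matches the paper's.
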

\noindent To prove Theorem \ref{first-half} we will first need the following results.

\begin{lem}\label{inputmakeswhole}
Let $\mathcal{N} = (N, \mathcal{T})$ be a homogeneous network with asymmetric input, and let $U \subset N$ be a set of nodes such that 
\begin{equation}
\bigcup_{p \in U} N_p = N\, .
\end{equation}
In other words, the input networks for the nodes of $U$ cover the whole network. Let $f: V^{\mathcal{T}} \rightarrow V$ be a linear response function for $\mathcal{N}$, and denote by $\gamma_f$ and $\gamma_f^p$ the corresponding admissible maps for $\mathcal{N}$ and the input network $\mathcal{N}_p$ of a node $p \in N$, respectively. Then any eigenvalue of $\gamma_f$ is also an eigenvalue of at least one of the maps  $\gamma^p_f$ for $p \in U$.
\end{lem}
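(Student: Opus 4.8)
The plan is to exploit the conjugacy between $\gamma_f$ and each $\gamma_f^p$ provided by the surjections $\psi_p$ and the set-theoretic observation that these kernels, taken together, intersect trivially. Recall from Section~\ref{Homogeneous Networks and the Fundamental Network} that for each $p$ there is a linear surjection $\psi_p: V^N \to V^{N_p}$ with $(\psi_p(x))_q = x_q$ for $q \in N_p$, satisfying $\gamma_f^p \circ \psi_p = \psi_p \circ \gamma_f$, and with $\ker(\psi_p) = \{v \in V^N \mid v_q = 0 \ \forall q \in N_p\}$. The key elementary fact is that $\bigcap_{p \in U} \ker(\psi_p) = \{0\}$, since $\bigcup_{p \in U} N_p = N$ forces any $v$ in the intersection to vanish on every node.

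First I would take an eigenvalue $\lambda$ of $\gamma_f$ with a nonzero eigenvector $v \in V^N$, so $\gamma_f v = \lambda v$. Since $v \neq 0$ and $\bigcap_{p \in U}\ker(\psi_p) = \{0\}$, there is some $p \in U$ with $\psi_p(v) \neq 0$. Then applying the conjugacy relation gives
\begin{equation}
\gamma_f^p(\psi_p(v)) = \psi_p(\gamma_f(v)) = \psi_p(\lambda v) = \lambda\, \psi_p(v)\, ,
\end{equation}
so $\psi_p(v)$ is a nonzero eigenvector of $\gamma_f^p$ with eigenvalue $\lambda$. Hence $\lambda$ is an eigenvalue of $\gamma_f^p$ for this $p \in U$, which is exactly the claim. (One should note that linearity of $f$ is used only to guarantee $\gamma_f$, $\gamma_f^p$ are genuine linear maps whose spectra make sense; the conjugacy identity $\gamma_f^p\circ\psi_p = \psi_p\circ\gamma_f$ holds for all response functions as stated in the excerpt.)

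I do not anticipate a serious obstacle here: the argument is a short diagram chase once the right objects are in hand. The only point requiring a sentence of care is why $\bigcap_{p\in U}\ker(\psi_p)=\{0\}$ — this is immediate from the explicit description of $\ker(\psi_p)$ together with the covering hypothesis $\bigcup_{p\in U}N_p = N$ — and why, given $v\neq 0$, at least one $\psi_p(v)$ is nonzero, which is the contrapositive of that same fact. If one wanted to phrase it without mentioning the intersection explicitly, one could instead argue directly: pick any node $q$ with $v_q \neq 0$ (such a $q$ exists as $v\neq 0$), choose $p \in U$ with $q \in N_p$, and observe $(\psi_p(v))_q = v_q \neq 0$, so $\psi_p(v)\neq 0$; the rest of the computation is unchanged. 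I would likely present this second, more hands-on version in the paper since it avoids introducing extra notation.
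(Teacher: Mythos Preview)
Your proof is correct and essentially identical to the paper's: both use the conjugacy $\gamma_f^p\circ\psi_p=\psi_p\circ\gamma_f$ together with the kernel description of $\psi_p$ and the covering hypothesis to show that a nonzero eigenvector $v$ of $\gamma_f$ must have $\psi_p(v)\neq 0$ for some $p\in U$, whence $\psi_p(v)$ is an eigenvector of $\gamma_f^p$. The only cosmetic difference is that the paper phrases the nonvanishing step as a proof by contradiction (assume all $\psi_p(v)$ vanish, conclude $v=0$), whereas you state it as $\bigcap_{p\in U}\ker(\psi_p)=\{0\}$; your alternative ``hands-on'' version is exactly the contrapositive the paper uses.
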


\begin{proof}
Recall from Section \ref{Homogeneous Networks and the Fundamental Network} that we have surjective linear maps $\psi_p: V^N \rightarrow V^{N_p}$ for all nodes $p \in N$ satisfying 
\begin{equation}\label{comm34}
\gamma^p_f \circ \psi_p = \psi_p \circ \gamma_f \, .
\end{equation}
Moreover, it holds that 
\begin{equation} \label{descriptionn1}
\ker(\psi_p) = \{ v \in V^N \, \mid \, v_q = 0 \,\, \forall\, q \in N_p \subset N \} \, .
\end{equation}
Now suppose $\la \in \C$ is an eigenvalue of $\gamma_f$ and choose a corresponding eigenvector $0 \not=v \in V^N$. It follows from equation \eqref{comm34} that 
\begin{equation}
\gamma^p_f(\psi_p(v)) = \psi_p(\gamma_f (v)) = \psi_p(\la v) = \la \psi_p(v)\, ,
\end{equation}
for all $p \in N$. Hence, either $\psi_p(v)$ vanishes, or it is an eigenvector for the eigenvalue $\la$ of $\gamma^p_f$. Suppose that $\psi_p(v)$ vanishes for all $p \in U$. From expression \eqref{descriptionn1} it follows that $v_q = 0$ for all $q \in N_p$, for all $p \in U$. However, as the input networks of the nodes in $U$ cover all of $N$, we see that necessarily $v = 0$. This contradicts the fact that $v$ is an eigenvector of $\gamma_f$, and we conclude that $\la$ is an eigenvalue of $\gamma^p_f$ for at least one $p \in U$. This completes the proof.
\end{proof}

\begin{remk}\label{same--eigenvalues}
It follows from Lemma \ref{inputmakeswhole} that any eigenvalue of an admissible map $\gamma_f^{\mathcal{N}}$ for a constructible network $\mathcal{N} \in \mathfrak{C}_{\mathcal{F}}$ is also an eigenvalue of $\Gamma_f$ (the corresponding admissible map for the fundamental network $\mathcal{F}$). More precisely, Lemma \ref{inputmakeswhole} tells us that an eigenvalue of $\gamma_f^{\mathcal{N}}$ is also an eigenvalue of $\gamma_f^{{\mathcal{N}}_p}$ (the corresponding admissible map for the input network $\mathcal{N}_p \subset \mathcal{N}$) for some node $p \in N$. This holds because any network is covered by the input networks of all of its nodes. By definition of a constructible network, these input networks are all quotient networks of $\mathcal{F}$. This means that there exists a robust synchrony space $S_p \subset V^{\Sigma}$ such that $\Gamma_f|_{S_p}$ is conjugate to $\gamma_f^{{\mathcal{N}}_p}$. We conclude that any eigenvalue of a map $\gamma_f^{{\mathcal{N}}_p}$, and hence any eigenvalue of $\gamma_f^{\mathcal{N}}$, is in the spectrum of $\Gamma_f$. \\
We can already say more in the case of $V = \C$. As the network multipliers are defined as the representation multipliers of a decomposition of $\C^{\Sigma}$ (and as $\End(\C^{\Sigma})$ consists of exactly all the maps $\Gamma_f$), we conclude from Proposition \ref{jacobsonforeigenvalues} that the eigenvalues of $\Gamma_f$ are given by those of the network multipliers. Hence, we conclude that for $V = \C$, every eigenvalue of a map $\gamma_f^{\mathcal{N}}$ is an eigenvalue of a network multiplier.  $\hfill \triangle$
\end{remk}


\noindent Next, we need the following result.
\begin{prop}\label{intergerwholenetwork}
Given a network $\mathcal{N} = (N, \Sigma)$, there exist integers $(s_p)_{p \in N}$ such that for any linear response function $f: \C^{\Sigma} \rightarrow \C$ we have
\begin{equation}
\tr(\gamma_f) = \sum_{p \in N} s_p \tr(\gamma_f^p)\, .
\end{equation}
\end{prop}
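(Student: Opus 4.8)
The plan is to express $\tr(\gamma_f)$ and each $\tr(\gamma_f^p)$ directly in terms of the coefficients $(c_\sigma)_{\sigma \in \Sigma}$ using Proposition \ref{traceisfixedloops}, and then reduce the statement to a purely combinatorial identity about fixed points of the input maps. By Proposition \ref{traceisfixedloops} we have
\begin{equation}
\tr(\gamma_f) = \sum_{\sigma \in \Sigma} c_\sigma \cdot \#\{p \in N \mid \sigma(p) = p\}\, ,
\end{equation}
and, applying the same proposition to the input network $\mathcal{N}_p = (N_p, (\sigma|_{N_p})_{\sigma \in \Sigma})$,
\begin{equation}
\tr(\gamma_f^p) = \sum_{\sigma \in \Sigma} c_\sigma \cdot \#\{q \in N_p \mid \sigma(q) = q\}\, .
\end{equation}
Since both sides of the claimed identity are linear in $f$ (equivalently, in the coefficient vector $(c_\sigma)_{\sigma \in \Sigma}$), and since the $c_\sigma$ can be chosen freely and independently, the identity $\tr(\gamma_f) = \sum_{p \in N} s_p \tr(\gamma_f^p)$ for all $f$ is equivalent to the system of equations
\begin{equation}\label{combiidentity}
\#\{p \in N \mid \sigma(p) = p\} = \sum_{p \in N} s_p \cdot \#\{q \in N_p \mid \sigma(q) = q\}\, , \quad \text{for all } \sigma \in \Sigma\, .
\end{equation}
So the task becomes: find integers $(s_p)_{p \in N}$ satisfying \eqref{combiidentity} simultaneously for every $\sigma \in \Sigma$.

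The key structural observation I would use is that the input networks $\mathcal{N}_p$ are partially ordered by inclusion: recall from Section \ref{Homogeneous Networks and the Fundamental Network} that $N_q \subseteq N_p$ whenever $q \in N_p$, and more precisely that $N_{\sigma(p)} \subseteq N_p$ for all $\sigma \in \Sigma$. Moreover, a fixed point $q$ of $\sigma$ with $q \in N_p$ generates its own input network $N_q \subseteq N_p$, and $q$ lies in $N_r$ precisely when $N_q \subseteq N_r$. This suggests defining the $s_p$ by a Möbius-type inversion over the poset of input networks. Concretely, group the nodes of $N$ by their input network: for each node $p$, let $[\mathcal{N}_p]$ denote its isomorphism class (or rather its concrete input network), and note that the right-hand side of \eqref{combiidentity} only depends on the $s_p$ through the sums $S_{\mathcal{M}} := \sum_{p : N_p = \mathcal{M}} s_p$ over nodes sharing a given input network $\mathcal{M}$. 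Rewriting, for a fixed $\sigma$,
\begin{equation}
\sum_{p \in N} s_p \cdot \#\{q \in N_p \mid \sigma(q) = q\} = \sum_{p \in N} s_p \sum_{\substack{q \in N \\ \sigma(q) = q,\ N_q \subseteq N_p}} 1 = \sum_{\substack{q \in N \\ \sigma(q) = q}}\ \sum_{\substack{p \in N \\ N_q \subseteq N_p}} s_p\, .
\end{equation}
Therefore \eqref{combiidentity} holds for all $\sigma$ as soon as $\sum_{p : N_q \subseteq N_p} s_p = 1$ for every node $q \in N$ (it suffices to have this for every $q$ that is a fixed point of some $\sigma$, but asking it for all $q$ is clean and, since $q$ is always a fixed point of the identity map, is in fact forced). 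This is a triangular linear system over the poset $(\{N_p : p \in N\}, \subseteq)$: order the distinct input networks by a linear extension of reverse inclusion, so that $N_q \subseteq N_p$ forces $\mathcal{N}_p$ to come no later than $\mathcal{N}_q$; then the system for the aggregated unknowns $S_{\mathcal{M}}$ is upper-triangular with $1$'s on the diagonal, hence uniquely solvable over $\Z$ by back-substitution. Having found the $S_{\mathcal{M}}$, one distributes them arbitrarily among the nodes with $N_p = \mathcal{M}$ (e.g.\ put $S_{\mathcal{M}}$ on one such node and $0$ on the others) to obtain integer $s_p$.

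I expect the main obstacle to be bookkeeping rather than anything deep: one must be careful that the poset of input networks really is a poset (anti-symmetry of $\subseteq$ on the actual subsets $N_p \subseteq N$ is automatic), that the rewriting of $\#\{q \in N_p : \sigma(q) = q\}$ as a sum over fixed points $q$ with $N_q \subseteq N_p$ is correct — this uses exactly the fact that $q \in N_p \iff N_q \subseteq N_p$, which in turn follows from $N$ being closed under the input maps and $N_q$ being the smallest such closed set containing $q$ — and that the resulting triangular system has integer (indeed, by Möbius inversion, explicitly computable) solutions. An alternative, slicker route that avoids even setting up the poset explicitly: induct on $\# N$. If $\mathcal{N}$ is itself an input network (i.e.\ $N = N_p$ for some $p$), take $s_p = 1$ and all other $s_q = 0$; this already gives $\tr(\gamma_f) = \tr(\gamma_f^p)$. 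In general, pick a node $p$ with $N_p = N$ if one exists and recurse on the complement, or — when no node has $N_p = N$ — write $N$ as the union of the proper input networks $N_{p_1}, \dots, N_{p_r}$ of its ``maximal'' nodes, apply inclusion–exclusion together with the observation that $N_{p_i} \cap N_{p_j}$ is again a union of input networks, and invoke the inductive hypothesis on each piece. Either way the integrality of the $s_p$ is immediate from the construction. I would present the Möbius-inversion argument as the main proof since it gives the $s_p$ in closed form, with the inductive argument mentioned as a remark.
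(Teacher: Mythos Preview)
Your proposal is correct. The combinatorial identity you isolate, namely $\sum_{p:\,N_q\subseteq N_p} s_p = 1$ for every $q\in N$, is precisely the paper's Lemma~\ref{zmodultresult} rewritten (since $q\in N_p \Leftrightarrow N_q\subseteq N_p$, the condition $T=\sum_p s_p T_p$ in $\Z N$ reads coordinate-wise as exactly your system). So at the core the two arguments coincide.

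Where you genuinely diverge is in how you establish this identity and how you reduce the trace statement to it. For the reduction, you invoke Proposition~\ref{traceisfixedloops} directly, whereas the paper reproves the relevant special case by an explicit basis computation with the spaces $K_p\oplus L_p$; your route is shorter and reuses existing machinery. For the combinatorial lemma itself, the paper argues by induction on $\#N$ via an inclusion--exclusion on a minimal covering family of input networks (close in spirit to your ``alternative route''), while your primary argument solves the system by M\"obius inversion on the poset of distinct input networks, observing that the aggregated system in the unknowns $S_{\mathcal{M}}$ is upper triangular with $1$'s on the diagonal. Your approach has the advantage of giving the $s_p$ in essentially closed form and making integrality transparent without an induction; the paper's induction, on the other hand, avoids introducing the poset and is perhaps more self-contained for a reader unfamiliar with M\"obius inversion. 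Either argument proves the proposition.
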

\noindent In order to prove Proposition \ref{intergerwholenetwork}, we first need to know more about how the different input networks make up a whole network. This is captured by the following algebraic objects.

\begin{defi}
Given a network $ \mathcal{N} = (N, \Sigma)$, we define the $\Z$-module
\begin{equation}
\Z N:= \bigoplus_{q \in N} \Z q \, .
\end{equation}
Elements of this module are simply given by formal sums 
\begin{equation}
\sum_{q \in N}n_q \cdot q \, ,
\end{equation}
with $n_q \in \Z$. We also distinguish special elements in $\Z N$, given by 
\begin{equation}
T := \sum_{q \in N}1 \cdot q \quad \text{ and } \quad T_p := \sum_{q \in N_p}1 \cdot q \, ,
\end{equation}
for $p \in N$ and with $N_p$ the nodes of the input network of $p$.
\end{defi}

\begin{lem}\label{zmodultresult}
The element $T$ is contained in the $\Z$-span of the elements $T_p$. In other words, we have 
\begin{equation}
T = \sum_{p \in N}s_p T_p \, ,
\end{equation}
for some $s_p \in \Z$.
\end{lem}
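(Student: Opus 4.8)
The plan is to prove Lemma~\ref{zmodultresult} by a downward induction on the nodes of $\mathcal{N}$, organised by a partial order coming from reachability. The key structural fact is that each $T_p = \sum_{q \in N_p} q$ is a sum over the input network of $p$, and that $p$ itself always lies in $N_p$; moreover, if $q \in N_p$ then $N_q \subseteq N_p$, since anything reaching $q$ reaches $p$ through $q$. So the ``leading term'' of $T_p$ (with respect to a suitable order) is $p$, and the remaining terms involve only nodes $q$ with $N_q \subsetneq N_p$ or $N_q = N_p$.

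Concretely, define a preorder on $N$ by $q \preceq p$ iff $q \in N_p$ (equivalently, $q$ directly or indirectly influences $p$). First I would pass to the associated partial order on equivalence classes, where $q \sim p$ iff $q \preceq p$ and $p \preceq q$; note that if $q \sim p$ then $N_q = N_p$, so $T_q = T_p$. Pick one representative from each class, call the set of representatives $R$. The claim is that $T$ lies in the $\Z$-span of $\{T_p : p \in R\}$, which suffices since each $T_p$ with $p \in R$ is one of the $T_p$ over all $p \in N$. To see this, process the classes from maximal to minimal: for a maximal class with representative $p$, the term $p$ appears in $T_p$ but in no $T_q$ for $q$ in a strictly smaller or incomparable class — indeed $p \in N_q$ would force $q \succeq p$ — so we may subtract an appropriate integer multiple of $T_p$ from $T$ to kill the coefficients of all nodes in that maximal class simultaneously (using $T_q = T_p$ for $q \sim p$). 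Iterating down the partial order, after finitely many steps we reach $0$, and reading off the coefficients used gives the desired $s_p \in \Z$.

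The combinatorial heart is the observation that the matrix expressing the vectors $(T_p)_{p \in R}$ in the basis $(q)_{q \in R}$ — after summing $T_p$ over each $\sim$-class and likewise collapsing coordinates — is ``unitriangular'' with respect to the partial order: the $p$-coordinate of $T_p$ is $1$, and the $q$-coordinate of $T_p$ vanishes unless $q \preceq p$. A unitriangular integer matrix is invertible over $\Z$, so in particular $T = \sum_{q \in R} 1 \cdot q$ (with coordinates again collapsed) lies in the $\Z$-span of the $T_p$; unwinding the collapse gives the statement for the original $\Z N$, since within a class all the $q$ and all the $T_p$ agree appropriately. I expect the main obstacle to be purely bookkeeping: handling the $\sim$-classes carefully so that the ``triangularity'' argument is clean, and making sure the integrality (as opposed to rationality) of the coefficients is visible — which it is, precisely because the diagonal entries are $1$ rather than arbitrary nonzero integers. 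No analysis or representation theory is needed here; it is a finite linear-algebra-over-$\Z$ argument driven by the poset of input networks.
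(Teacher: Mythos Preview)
Your argument is correct and takes a genuinely different route from the paper. The paper proves the lemma by induction on $\#N$: it picks a minimal set $S$ of nodes whose input networks cover $\mathcal{N}$, singles out one $u\in S$, sets $\mathcal{M}=\bigcup_{p\in S\setminus\{u\}}\mathcal{N}_p$ and $\mathcal{P}=\mathcal{M}\cap\mathcal{N}_u$, and uses the two-set inclusion--exclusion $T = T_{\mathcal{M}} + T_u - T_{\mathcal{P}}$ together with the induction hypothesis applied to the strict subnetworks $\mathcal{M}$ and $\mathcal{P}$. Your approach instead organises everything by the reachability preorder and performs what is essentially M\"obius-type elimination on the poset $N/{\sim}$: the key observation that each $N_p$ is a union of $\sim$-classes makes the coefficients constant on classes, so the top-down sweep kills one class at a time with an integer multiple of a single $T_p$. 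This is cleaner and immediately algorithmic, and it makes the integrality transparent via the $1$'s on the diagonal; the paper's argument is more elementary (no poset language) but requires checking that $\mathcal{M}$ and $\mathcal{P}$ are genuine strict subnetworks at each inductive step. One small point: your ``collapsing coordinates'' phrasing is a little ambiguous---the tidiest version is to restrict to the $R$-coordinates (one per class), where the zeta matrix of the partial order on $R$ is unitriangular over $\Z$, and then observe that $T$ and each $T_p$ are constant on $\sim$-classes so the identity on $R$-coordinates lifts to all of $\Z N$.
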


\begin{proof}
We argue by induction on the size of the network $\mathcal{N}$. If there is only one node $p$ then the whole network is clearly equal to the input network of $p$. Hence,  we find $T = T_p$. Similarly, if there are only two nodes then the whole network is either equal to the disjoint union of (the input networks of) the two nodes, or equal to the input network of one of the nodes. Let us therefore assume the statement is true for all networks with less than $k$ nodes, for $k>2$. Given a network $\mathcal{N} = (N, \Sigma)$ with $k$ nodes, let $S \subset N$ be a minimal set of nodes whose input networks cover the whole network. In other words, the input networks of the nodes of any strict subset of $S$ do not cover $\mathcal{N}$. We moreover pick a single node $u \in S$. If $S = \{u\}$ then we clearly have $T = T_u$, so assume $S$ contains more elements. By assumption, the network $\mathcal{M}$ that consists of the input networks of all nodes in $S \setminus \{u\}$ is not the whole network. However, $\mathcal{M}$ is a subnetwork of $\mathcal{N}$, meaning that it has only outgoing arrows, and so contains the input networks of all of its nodes. It follows from the induction hypotheses that
\begin{equation}\label{compform}
T_{\mathcal{M}} := \sum_{p \in M} 1 \cdot p = \sum_{p \in M} t_{p} T_p\, ,
\end{equation}
where $M$ denotes the nodes of $\mathcal{M}$, and for some $t_p \in \Z$. Likewise, it can be seen that $\mathcal{P}:= \mathcal{M} \cap \mathcal{N}_u$ is a strict subnetwork of $\mathcal{N}$. Hence, we may write
\begin{equation}\label{compfort}
T_{\mathcal{P}} := \sum_{p \in P} 1 \cdot p = \sum_{p \in P} r_{p} T_p\, ,
\end{equation}
where $P$ denotes the nodes of $\mathcal{P}$, and where we have $r_p \in \Z$. If $\mathcal{P}$ is empty then we simply write $T_{\mathcal{P}} = 0$. Combining expressions \eqref{compform} and  \eqref{compfort} we finally arrive at
\begin{equation}
T = T_{\mathcal{M}} + T_u - T_{\mathcal{P}} =  \sum_{p \in M} t_{p} T_p + T_u - \sum_{p \in P} r_{p} T_p = \sum_{p \in N} s_{p} T_p \, ,
\end{equation}
for some appropriate $s_p \in \Z$. This proves the lemma.
\end{proof}

\begin{proof}[Proof of Proposition \ref{intergerwholenetwork}]
We again use the fact that for every node $p \in N$ there is a surjective map $\psi_p: \C^N \rightarrow \C^{N_p}$ conjugating $\gamma_f$ and $\gamma_f^p$. I.e. we have 
\begin{equation}\label{comm35}
\gamma^p_f \circ \psi_p = \psi_p \circ \gamma_f \, .
\end{equation}
The kernel of this map is given by
\begin{equation}
\ker(\psi_p) = \{ (x_q)_{q \in N} \in \C^N \, \mid \, x_q = 0 \, \forall \, q \in N_p\} =: K_p\, .
\end{equation}
We furthermore define the space 
\begin{equation}
L_p := \{ (x_q)_{q \in N} \in \C^N \, \mid \, x_q = 0  \, \forall \,  q \notin N_p\}\, ,
\end{equation}
so that we have 
\begin{equation}\label{decoom1r}
\C^{N} = K_p \oplus L_p\, ,
\end{equation}
for every $p \in N$. Moreover, we denote by $Q_p: \C^N \rightarrow L_p$ and $I_p: L_p \rightarrow \C^{N}$ the projection and inclusion corresponding to the decomposition \eqref{decoom1r}. As $\psi_p$ is surjective, it follows that the map $ \psi_p\circ I_p: L_p \rightarrow \C^{N_p}$ is a bijection. Moreover, since $K_p$ equals the kernel of $\psi_p$, we have $\psi_p = \psi_p \circ I_p \circ Q_p$. We may therefore rewrite equation \eqref{comm35} to 
\begin{equation}
\gamma^p_f \circ \psi_p = \psi_p \circ I_p \circ Q_p \circ \gamma_f \, .
\end{equation}
From this it follows that 
\begin{equation}
\gamma^p_f  \circ [\psi_p \circ I_p]= [\psi_p \circ I_p]\circ ( Q_p \circ \gamma_f   \circ I_p) \, .
\end{equation}
In other words, the bijection  $\psi_p\circ I_p$ conjugates $\gamma^p_f $ and $Q_p \circ \gamma_f   \circ I_p$. The latter of these maps is exactly the diagonal block of $\gamma_f$ corresponding to the $L_p$ component of the decomposition \eqref{decoom1r}. In particular, we conclude that this block of $\gamma_f$ has the same trace as $\gamma_f^p$. \\
Next, for any node $q \in N$ we define the element $\delta_q \in \C^N$ by $(\delta_q)_r = \delta_{q,r}$ for all $r \in N$. It follows that $(\delta_q)_{q \in N}$ forms a basis of $\C^{N}$, and that $(\delta_q)_{q \in N_p}$ forms a basis of $L_p$. We may therefore write
\begin{equation}\label{trace44p11}
 \tr(\gamma_f^p) = \sum_{q \in N_p}(\gamma_f(\delta_q))_q\, .
\end{equation}
As the trace of $\gamma_f$ is given by the full sum
\begin{equation}\label{trace4411}
 \tr(\gamma_f) = \sum_{q \in N}(\gamma_f(\delta_q))_q\, ,
\end{equation}
it remains to show that the set of nodes $N$ may be expressed as a combination of the sets of nodes of the various input networks $N_p$. However, this is exactly the result of Lemma \ref{zmodultresult}. More precisely, if we have
 \begin{equation}
T = \sum_{p \in N}s_p T_p \, ,
\end{equation}
for some $s_p \in \Z$, then equations \eqref{trace44p11} and \eqref{trace4411}  tell us that we also have 
\begin{equation}
\tr(\gamma_f) = \sum_{p \in N} s_p \tr(\gamma_f^p)\, .
\end{equation}
This proves the proposition.
\end{proof}

\noindent As a next step towards proving Theorem \ref{first-half}, we now show that the result holds in the special case of $V = \C$.
\begin{prop}\label{pp001}
For any constructible network $\mathcal{N} = (N, \Sigma) \in \mathfrak{C}_{\mathcal{F}}$, there exist non-negative integers $m^{\mathcal{N}}_l$, $l \in \{1, \dots, k\}$, such that
\begin{equation}
\tr(\gamma^{\mathcal{N}}_f) = \sum_{l =1}^k m^{\mathcal{N}}_l \tr(\Lambda^l(c)) \, ,
\end{equation}
for any linear response function $f:\C^{\Sigma} \rightarrow \C$ with coefficients $c \in \C^{\Sigma}$. The integers $(m^{\mathcal{N}}_l)_{l=1}^k$ furthermore satisfy 
\begin{equation}
\sum_{l =1}^k m^{\mathcal{N}}_l n_l = \# N\, .
\end{equation}
\end{prop}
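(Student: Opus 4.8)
The plan is to combine Proposition \ref{intergerwholenetwork} (which reduces the trace of $\gamma_f^{\mathcal{N}}$ to an integer combination of traces of admissible maps on the input networks $\mathcal{N}_p$) with the fact that each input network $\mathcal{N}_p$ is a quotient of $\mathcal{F}$, so that $\gamma_f^{p}$ is conjugate to $\Gamma_f|_{S_p}$ for a robust synchrony space $S_p \subset \C^{\Sigma}$, and then apply Proposition \ref{steponee33} to each such restriction. The key point is that everything will be expressed through the representation multipliers of $\C^{\Sigma}$, which are exactly the network multipliers $\Lambda^l$, and that the resulting coefficients will turn out to be non-negative even though the $s_p$ of Proposition \ref{intergerwholenetwork} are only guaranteed to be integers.

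First I would invoke Proposition \ref{intergerwholenetwork} to obtain integers $(s_p)_{p \in N}$ with $\tr(\gamma_f^{\mathcal{N}}) = \sum_{p \in N} s_p \tr(\gamma_f^{p})$ for every linear $f: \C^{\Sigma} \to \C$. Next, since $\mathcal{N} \in \mathfrak{C}_{\mathcal{F}}$, each input network $\mathcal{N}_p$ is (isomorphic to) a quotient of $\mathcal{F}$, so by Lemma \ref{includedyeah} and the discussion of robust synchrony spaces there is a robust subspace $S_p \subset \C^{\Sigma}$ with $\gamma_f^{p}$ conjugate to $\Gamma_f|_{S_p}$; in particular $\tr(\gamma_f^{p}) = \tr(\Gamma_f|_{S_p})$. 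Applying Proposition \ref{steponee33} to the representation $X = \C^{\Sigma}$ and the robust subspace $S_p$ gives non-negative integers $m^{S_p}_l := \dim(W_l \cap S_p)$ with $\tr(\Gamma_f|_{S_p}) = \sum_{l=1}^k m^{S_p}_l \tr(\Lambda^l(c))$ and $\sum_l m^{S_p}_l n_l = \dim(S_p) = \# N_p$. Substituting, $\tr(\gamma_f^{\mathcal{N}}) = \sum_{l=1}^k \bigl(\sum_{p \in N} s_p m^{S_p}_l\bigr)\tr(\Lambda^l(c))$, so we may set $m^{\mathcal{N}}_l := \sum_{p \in N} s_p m^{S_p}_l \in \Z$.

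It remains to verify the two asserted properties of $m^{\mathcal{N}}_l$: that it is \emph{non-negative}, and that $\sum_l m^{\mathcal{N}}_l n_l = \# N$. The counting identity is the easier of the two: $\sum_{l=1}^k m^{\mathcal{N}}_l n_l = \sum_{p \in N} s_p \sum_{l=1}^k m^{S_p}_l n_l = \sum_{p \in N} s_p \# N_p = \sum_{p \in N} s_p \,\tr(\id_{\C^{N_p}})$, and applying Proposition \ref{intergerwholenetwork} to the identity map (formally, to the response function whose only nonzero coefficient is $c_e = 1$, which yields $\gamma_f^{\mathcal{N}} = \id$ by Lemma \ref{stillcloded1}, since $e_{\mathcal{N}} = \Id_{\mathcal{N}}$) shows this equals $\tr(\id_{\C^N}) = \# N$. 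Equivalently, one may re-run the proof of Lemma \ref{zmodultresult}: $T = \sum_p s_p T_p$ in $\Z N$, and counting coefficients (i.e. applying the augmentation $\Z N \to \Z$) gives $\# N = \sum_p s_p \# N_p$.

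The main obstacle is establishing non-negativity of $m^{\mathcal{N}}_l$, since the individual $s_p$ need not be non-negative. I expect this to require a separate argument rather than a direct manipulation of the formula above. The natural route is to show that $m^{\mathcal{N}}_l$ has an intrinsic meaning independent of the particular decomposition $T = \sum_p s_p T_p$: namely, by the linear independence of the $\Lambda^l_{i,j}$ (Lemma \ref{indjoepie}), hence of the $\tr(\Lambda^l) = \sum_i \Lambda^l_{i,i}$ across distinct $l$, the coefficients $m^{\mathcal{N}}_l$ in the expansion of $\tr(\gamma_f^{\mathcal{N}})$ are \emph{uniquely determined}. One then identifies $m^{\mathcal{N}}_l$ with a dimension. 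Concretely, I would note that the class of admissible maps for $\mathcal{N}$ coincides with that of a complete network with monoid $\Sigma_{\mathcal{N}}$ (the discussion preceding Corollary \ref{identityplusmulti}), so $\End$ of the corresponding representation $\C^{N}$ — more precisely, $\C^N$ realized as a robust subspace of a suitable power of $\C^{\Sigma}$ via the covering by input networks, or directly as the representation of $\Sigma$ on $\C^N$ via the maps $\sigma_{\mathcal{N}}$ — decomposes with the \emph{same} indecomposables $W_l$ appearing (each $W_l$ occurring, possibly, zero times), because any indecomposable summand of $\C^N$ pulls back to one of $\C^{\Sigma}$ under the covering maps $\psi_p$ and $\theta_p$. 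Then $m^{\mathcal{N}}_l$ is the multiplicity of $W_l$ in $\C^N$, which is $\geq 0$, and the identity $\sum_l m^{\mathcal{N}}_l n_l = \# N$ is just $\dim \C^N = \# N$ re-derived. Since the present Proposition \ref{pp001} only claims existence of non-negative integers with these two properties, exhibiting $m^{\mathcal{N}}_l$ as such a multiplicity and appealing to uniqueness to match it with the coefficient extracted above closes the argument.
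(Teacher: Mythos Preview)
Your derivation of the trace formula (Proposition \ref{intergerwholenetwork} followed by Proposition \ref{steponee33} on each synchrony space $S_p$) matches the paper exactly, and your argument for $\sum_l m^{\mathcal{N}}_l n_l = \#N$ via the augmentation $\Z N \to \Z$ applied to $T=\sum_p s_p T_p$ from Lemma \ref{zmodultresult} is correct, though different from the paper's route (which instead evaluates at a response function $\alpha$ with $\Lambda^l(\Gamma_\alpha)=\Id_{n_l}$ for all $l$ and uses that every eigenvalue of $\gamma^{\mathcal{N}}_\alpha$ is $1$).

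The non-negativity step, however, has a genuine gap. You propose to identify $m^{\mathcal{N}}_l$ as the multiplicity of $W_l$ in a Krull--Schmidt decomposition of $\C^N$, but this requires a $\Sigma$-representation structure on $\C^N$ for which the admissible maps $\gamma^{\mathcal{N}}_f$ are exactly the endomorphisms. The paper only establishes this for the fundamental network (Lemma \ref{symmetryyeah}); the symmetries $A_\sigma$ come from \emph{right} multiplication on $\Sigma$, and there is no analogous right action of $\Sigma$ on the nodes of a general constructible network. The input maps $\sigma_{\mathcal{N}}$ give only a left action on $N$, which does not yield the needed structure. Your alternative of embedding $\C^N$ into a power $(\C^\Sigma)^r$ via $\Psi=(\theta_{p_i}\circ\psi_{p_i})_i$ is indeed injective, but its image is not a robust subspace: an endomorphism of $(\C^\Sigma)^r$ may act by distinct $\Gamma_{g_{ij}}$ on the different factors and will not preserve the diagonal-type image of $\Psi$, so Proposition \ref{steponee33} does not apply.

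The paper's argument for $m^{\mathcal{N}}_s\ge 0$ is more elementary and sidesteps all of this: for each fixed $s$, Lemma \ref{indjoepie} furnishes a response function $f_s$ with $\Lambda^s(\Gamma_{f_s})=\Id_{n_s}$ and $\Lambda^l(\Gamma_{f_s})=0$ for $l\neq s$; by Remark \ref{same--eigenvalues} every eigenvalue of $\gamma^{\mathcal{N}}_{f_s}$ lies in $\{0,1\}$, so $0\le \tr(\gamma^{\mathcal{N}}_{f_s}) = m^{\mathcal{N}}_s n_s$.
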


\begin{proof}
By Proposition \ref{intergerwholenetwork} there exist integers $(s_p)_{p \in N}$ such that 
\begin{equation}\label{combine11}
\tr(\gamma^{\mathcal{N}}_f) = \sum_{p \in N} s_p \tr(\gamma_f^{\mathcal{N}_p})\, ,
\end{equation}
for any linear response function $f:\C^{\Sigma} \rightarrow \C$. Here $\gamma_f^{\mathcal{N}_p}$ denotes an admissible map for the input network $\mathcal{N}_p$ of a node $p \in N$. By definition of a constructible network, every input network $\mathcal{N}_p$ may be realised as a quotient of $\mathcal{F}$. It follows from Lemma \ref{steponee33} that there exist integers $(m^p_l)_{l=1}^k$ such that
\begin{equation}\label{combine12}
\tr(\gamma_f^{\mathcal{N}_p}) = \sum_{l =1}^k m^p_l \tr(\Lambda^l(c)) \, .
\end{equation}
Combining equations \eqref{combine11} and \eqref{combine12}, we get
\begin{align}
\tr(\gamma^{\mathcal{N}}_f) &= \sum_{p \in N} s_p \tr(\gamma_f^{\mathcal{N}_p}) =  \sum_{p \in N} s_p  \sum_{l =1}^k m^p_l \tr(\Lambda^l(c)) \\ \nonumber
&= \sum_{l =1}^k  \left( \sum_{p \in N}  s_p  m^p_l \right)\tr(\Lambda^l(c)) =  \sum_{l =1}^k  m^{\mathcal{N}}_l \tr(\Lambda^l(c))\, ,
\end{align}
where we have set
\begin{equation}
m^{\mathcal{N}}_l :=   \sum_{p \in N}  s_p  m^p_l \, .
\end{equation}
It remains to show that all $m^{\mathcal{N}}_l$ are non-negative, and that $\sum_{l =1}^k m^{\mathcal{N}}_l  n_l = \#N$. For the first statement, we fix an index $s \in \{1, \dots, k\}$. It follows from Lemma \ref{indjoepie} that there exists a response function $f_s$ such that $\Lambda^s(\Gamma_{f_s}) = \Id_{n_s}$ and  $\Lambda^l(\Gamma_{f_s}) = 0$ whenever $l \not=s$. Therefore, we see from Remark \ref{same--eigenvalues} that $\gamma^{\mathcal{N}}_{f_s}$ only has eigenvalues $0$ and $1$, and we conclude that $\tr(\gamma^{\mathcal{N}}_{f_s}) \geq 0$. From this we see that
\begin{equation}
0 \leq \tr(\gamma^{\mathcal{N}}_{f_s}) =  \sum_{l =1}^k  m^{\mathcal{N}}_l \tr(\Lambda^l(\Gamma_{f_s})) = m^{\mathcal{N}}_s n_s\, ,
\end{equation}
and it follows that $m^{\mathcal{N}}_s \geq 0$. To show that $\sum_{l =1}^k m^{\mathcal{N}}_l n_l = \#N$, we choose a linear response function $\alpha: \C^{\Sigma} \rightarrow \C$ such that $\Lambda^l(\Gamma_{\alpha}) = \Id_{n_l}$ for all $l \in \{1, \dots, k\}$ (for example by choosing  $\alpha(x) = x_e$, with $e \in \Sigma$ the unit). It follows from Remark \ref{same--eigenvalues} that $\gamma^{\mathcal{N}}_{\alpha}$ only has the eigenvalue $1$. Therefore, we see that
\begin{equation}
\#N = \tr(\gamma^{\mathcal{N}}_{\alpha}) =  \sum_{l =1}^k  m^{\mathcal{N}}_l \tr(\Lambda^l(\Gamma_{\alpha})) = \sum_{l=1}^k m^{\mathcal{N}}_l n_l\, .
\end{equation}
This proves the proposition.
\end{proof}

\begin{proof}[Proof of Theorem \ref{first-half}]
We choose the numbers $m_l^{\mathcal{N}}$ as in Proposition \ref{pp001}, so that it holds that 
\begin{equation}\label{expressionforjj123}
\tr(\gamma^{\mathcal{N}}_f) = \sum_{l =1}^k m^{\mathcal{N}}_l \tr(\Lambda^l(c)) \, ,
\end{equation}
for all linear response functions $f:\C^{\Sigma} \rightarrow \C$ with coefficients $c \in \lin(\C,\C)^{\Sigma}$. Note that both sides of equation \eqref{expressionforjj123} are linear expressions in the coefficients $c = (c_{\sigma})_{\sigma \in \Sigma} \in \C^{\Sigma}$. For a general phase space $V$, we may get the same expression (but now in the coefficients $C \in \lin(V,V)^{\Sigma}$) by replacing the trace by the so-called \ \textit{block-trace}, $\btr(\bullet)$. This is simply the sum of the diagonal $(V \times V)$-blocks of a linear map from a space $V^n$ to itself. In other words, we may conclude that
\begin{equation}\label{toughpointtomake2}
\btr(\gamma^{\mathcal{N}}_g) =   \sum_{l =1}^k m^{\mathcal{N}}_l  \btr(\Lambda^l(C)) \, .
\end{equation}
for all linear response functions $g:V^{\Sigma} \rightarrow V$ with coefficients $C \in \lin(V,V)^{\Sigma}$. Next, we note that for any linear map $A: V^n \rightarrow V^n$ we have the identity $\tr(\btr(A)) = \tr(A)$. That is, it makes no difference if one takes the block-trace and then the trace, or the trace straight away. Taking the (usual) trace of both sides of equation \eqref{toughpointtomake2} therefore gives
\begin{equation}\label{toughpointtomake23}
\tr(\gamma^{\mathcal{N}}_g) = \tr(\btr(\gamma^{\mathcal{N}}_g)) =  \sum_{l =1}^k m^{\mathcal{N}}_l  \tr(\btr(\Lambda^l(C))) = \sum_{l =1}^k m^{\mathcal{N}}_l  \tr(\Lambda^l(C)) \, ,
\end{equation}
which is what we want to show. Finally,  it follows from Proposition \ref{pp001} that we have $m_l^{\mathcal{N}} \geq 0$ for all $l \in \{1, \dots, k\}$, as well as
\begin{equation}
\sum_{l =1}^k m^{\mathcal{N}}_l n_l = \# N\, .
\end{equation}
This finishes the proof.
\end{proof}

\subsection{Multiplicity of the Network Multipliers}

Our next step is to generalise Proposition \ref{multiplicative}. To this end, we first want to understand the relation between the coefficients of linear response functions $f, g, h: V^{\Sigma} \rightarrow V$ if we have $\Gamma_f \circ \Gamma_g = \Gamma_h$.

\begin{lem}\label{nodiff}
Define a product $\circ_{\Sigma}$ on $\lin(V,V)^{\Sigma}$ by 
\begin{equation}\label{productte23q21}
(C \circ_{\Sigma} D)_{\sigma} = \sum_{\substack{\tau, \kappa \in \Sigma \\ \kappa \circ \tau = \sigma}} C_{\tau}D_{\kappa} \, .
\end{equation}
If the coefficients of the response functions $f$, $g$ and $h$ (all on $V^{\Sigma}$) are given by $C$, $D$ and $C \circ_{\Sigma} D$ respectively, then we have
\begin{equation}\label{productte23q2}
\Gamma_f \circ \Gamma_g = \Gamma_{h}\, .
\end{equation}
\end{lem}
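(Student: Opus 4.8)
The plan is to prove \eqref{productte23q2} by evaluating both sides componentwise, using the explicit formula for admissible maps on a complete fundamental network. Since $\mathcal{F}$ is complete, I would start from the description $(\Gamma_f(X))_\sigma = (f \circ A_\sigma)(X)$ recorded after Lemma \ref{symmetryyeah}, together with $(A_\sigma(X))_\tau = X_{\tau \circ \sigma}$. As $f$ is linear with coefficients $C \in \lin(V,V)^\Sigma$, this becomes
\[
(\Gamma_f(X))_\sigma = \sum_{\kappa \in \Sigma} C_\kappa\, X_{\kappa \circ \sigma},
\]
and likewise $(\Gamma_g(X))_\mu = \sum_{\tau \in \Sigma} D_\tau\, X_{\tau \circ \mu}$ for the coefficients $D$ of $g$.

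Next I would substitute the formula for $\Gamma_g$ into that for $\Gamma_f$ (at the index $\mu = \kappa \circ \sigma$) and use associativity of composition in $\Sigma$ to write $\tau \circ (\kappa \circ \sigma) = (\tau \circ \kappa) \circ \sigma$. Reindexing the resulting double sum by $\rho := \tau \circ \kappa$ gives
\[
(\Gamma_f(\Gamma_g(X)))_\sigma \;=\; \sum_{\kappa \in \Sigma} C_\kappa \sum_{\tau \in \Sigma} D_\tau\, X_{\tau \circ \kappa \circ \sigma} \;=\; \sum_{\rho \in \Sigma} \Big(\sum_{\substack{\kappa, \tau \in \Sigma \\ \tau \circ \kappa = \rho}} C_\kappa D_\tau\Big) X_{\rho \circ \sigma}.
\]
Relabelling $\kappa \leftrightarrow \tau$ in the inner sum shows its value is exactly $(C \circ_\Sigma D)_\rho = \sum_{\kappa \circ \tau = \rho} C_\tau D_\kappa$, so the right-hand side equals $\sum_{\rho \in \Sigma}(C \circ_\Sigma D)_\rho X_{\rho \circ \sigma} = (\Gamma_h(X))_\sigma$, where $h$ is the response function with coefficients $C \circ_\Sigma D$. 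Since this holds for all $\sigma \in \Sigma$ and all $X \in V^\Sigma$, we conclude $\Gamma_f \circ \Gamma_g = \Gamma_h$.

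The computation is essentially bookkeeping, and the only delicate point is the order of multiplication: the operators $A_\sigma$ act by \emph{pre}composition, so nested applications accumulate the monoid factors as $\tau \circ \kappa \circ \sigma$, while the matrix coefficients $C_\kappa D_\tau$ (composition in $\lin(V,V)$) pair with the monoid product in the reversed order — which is precisely why the definition \eqref{productte23q21} has $C_\tau D_\kappa$ with $\kappa \circ \tau = \sigma$. As an alternative that sidesteps the index chase, one can observe that $\Gamma_f \circ \Gamma_g$ is linear and, by the equivariance part of Lemma \ref{symmetryyeah}, commutes with every $A_\sigma$; hence by the last part of that lemma it equals $\Gamma_h$ for the unique linear $h$ given by $h = (\Gamma_f \circ \Gamma_g)_e$, and evaluating only the $e$-component (where $\rho \circ e = \rho$) immediately yields $h(X) = \sum_{\rho \in \Sigma}(C \circ_\Sigma D)_\rho X_\rho$, i.e.\ $h$ has coefficients $C \circ_\Sigma D$. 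Either way there is no real obstacle beyond keeping the composition order straight.
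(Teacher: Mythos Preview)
Your proof is correct. The direct componentwise computation of $(\Gamma_f(\Gamma_g(X)))_\sigma$ works exactly as you describe, and the relabelling $\kappa \leftrightarrow \tau$ correctly matches the definition \eqref{productte23q21}. Your alternative via Lemma~\ref{symmetryyeah} is also valid and is a clean way to isolate the single component $e$.

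Compared to the paper: the paper's proof is organised slightly differently. It first quotes from \cite{CCN} the identity $\Gamma_f \circ \Gamma_g = \Gamma_h$ for $h = f((g\circ A_{\sigma_1}),\dots,(g\circ A_{\sigma_n}))$, and then verifies that this $h$ has coefficients $C\circ_\Sigma D$ by evaluating on the basis vectors $\delta_\sigma v$. Your argument is more self-contained --- you compute the composite directly on an arbitrary $X$ without routing through the cited formula --- but the underlying algebra is the same double sum $\sum_{\kappa,\tau} C_\kappa D_\tau X_{\tau\circ\kappa\circ\sigma}$ regrouped by $\rho=\tau\circ\kappa$. The paper's route has the minor advantage of exhibiting the explicit formula for $h$ as a response function; yours has the advantage of not needing an external reference. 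Neither approach has any real edge in difficulty.
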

\begin{proof}
It is shown in \cite{CCN} that equation \eqref{productte23q2} holds if we have
\begin{equation}\label{prodto34}
h = f((g \circ A_{\sigma_1}), \dots, (g \circ A_{\sigma_n}))\, ,
\end{equation}
where the we have set $\Sigma = \{\sigma_1, \dots, \sigma_n\}$ for notational convenience. To show that equation \eqref{prodto34} is indeed satisfied, we pick a vector $v \in V$ and an element $\sigma \in \Sigma$ to construct $\delta_{\sigma}v \in V^{\Sigma}$ by $(\delta_{\sigma}v)_{\tau} = \delta_{\sigma, \tau}v$ for all $\tau \in \Sigma$. By linearity, it suffices to show that the two sides of equation \eqref{prodto34} agree on all elements of this form. By definition, we have 

\begin{equation}
h(\delta_{\sigma}v) = \sum_{\substack{\tau, \kappa \in \Sigma \\ \kappa \circ \tau = \sigma}} C_{\tau}D_{\kappa} v\, .
\end{equation}
On the other hand, we find
\begin{align}
f(\dots, (g \circ A_{\tau}), \dots) (\delta_{\sigma}v) &= \sum_{\tau \in \Sigma}C_{\tau}g(A_{\tau}\delta_{\sigma}v) = \sum_{\tau \in \Sigma}C_{\tau}\sum_{\kappa \in \Sigma}D_{\kappa}(A_{\tau}\delta_{\sigma}v)_{\kappa} \\ \nonumber
&= \sum_{\tau \in \Sigma}C_{\tau}\sum_{\kappa \in \Sigma}D_{\kappa}(\delta_{\sigma}v)_{\kappa \circ \tau} = \sum_{\tau \in \Sigma}C_{\tau}\sum_{\kappa \in \Sigma}D_{\kappa}(\delta_{\sigma, \kappa \circ \tau}v) \\ \nonumber
&= \sum_{\substack{\tau, \kappa \in \Sigma \\ \kappa \circ \tau = \sigma}} C_{\tau}D_{\kappa} v \, .
\end{align}
This shows that  equation \eqref{prodto34} indeed holds, thus proving lemma.
\end{proof}
\noindent As we may retrieve $f$ from $\Gamma_f$ by using that $(\Gamma_f)_e = f$, it follows that both equations \eqref{productte23q21} and \eqref{productte23q2} completely describe the product $\circ_{\Sigma}$ on $\lin(V,V)^{\Sigma}$. As a result, multiplicity of representation multipliers (Proposition \ref{multiplicative}) may be restated in the case of network multipliers as
\begin{equation}\label{productte23q22}
\Lambda^l(c)\Lambda^l(d) = \Lambda^l(c \circ_{\Sigma} d)\, ,
\end{equation}
for all $c,d \in \C^{\Sigma}$ and $l \in \{1, \dots, k\}$. As the network multipliers are defined in terms of the multipliers of the representation $(\C^{\Sigma}, (A_{\sigma})_{\sigma \in \Sigma})$, we may not immediately conclude equation \eqref{productte23q22} holds for $c,d \in \lin(V,V)$, for general $V$. However, using an argument similar to the one used in the proof of Theorem \ref{first-half} we can show that equation \eqref{productte23q22} does in fact hold true for general $V$:

\begin{prop}\label{second-half}
For all $C,D \in \lin(V,V)$ and $l \in \{1, \dots, k\}$ we have 
\begin{equation}\label{productte2322}
\Lambda^l(C)\Lambda^l(D) = \Lambda^l(C \circ_{\Sigma} D)\, .
\end{equation}
\end{prop}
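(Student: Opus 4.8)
The plan is to reduce this equality of $n_l \times n_l$ matrices with entries in $\lin(V,V)$ to a purely numerical identity among the complex coefficients $a_{i,j}^{l,\sigma}$ that define the network multipliers, and then to read that numerical identity off from the already-established scalar case, equation \eqref{productte23q22}.

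First I would write both sides of \eqref{productte2322} out in coordinates. Using $\Lambda^l_{i,j}(C) = \sum_{\sigma \in \Sigma} a_{i,j}^{l,\sigma} C_\sigma$ from Definition \ref{netwoorkmulti}, the $(i,j)$-entry of $\Lambda^l(C)\Lambda^l(D)$ is $\sum_{\sigma,\tau \in \Sigma}\bigl(\sum_{p=1}^{n_l} a_{i,p}^{l,\sigma} a_{p,j}^{l,\tau}\bigr)C_\sigma D_\tau$, whereas by the definition \eqref{productte23q21} of $\circ_\Sigma$ the $(i,j)$-entry of $\Lambda^l(C\circ_\Sigma D)$ is $\sum_{\rho} a_{i,j}^{l,\rho}\sum_{\kappa\circ\tau=\rho} C_\tau D_\kappa = \sum_{\tau,\kappa\in\Sigma} a_{i,j}^{l,\kappa\circ\tau} C_\tau D_\kappa$, which after renaming the dummy indices equals $\sum_{\sigma,\tau\in\Sigma} a_{i,j}^{l,\tau\circ\sigma} C_\sigma D_\tau$. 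Both expressions keep the $C$-factor to the left of the $D$-factor, so no commutativity is invoked anywhere. Consequently \eqref{productte2322} holds for every $C,D \in \lin(V,V)^\Sigma$ and every $V$ as soon as one knows the scalar identity
\[
\sum_{p=1}^{n_l} a_{i,p}^{l,\sigma}\, a_{p,j}^{l,\tau} = a_{i,j}^{l,\tau\circ\sigma}\qquad\text{for all } \sigma,\tau\in\Sigma,\ l\in\{1,\dots,k\},\ i,j\in\{1,\dots,n_l\}.
\]

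Second I would derive this scalar identity. Setting $V = \C$ in the two coordinate computations above shows that the $(i,j)$-entries of $\Lambda^l(c)\Lambda^l(d)$ and of $\Lambda^l(c\circ_\Sigma d)$ are the bilinear forms $\sum_{\sigma,\tau}\bigl(\sum_p a_{i,p}^{l,\sigma}a_{p,j}^{l,\tau}\bigr)c_\sigma d_\tau$ and $\sum_{\sigma,\tau} a_{i,j}^{l,\tau\circ\sigma}c_\sigma d_\tau$ in the independent variables $(c_\sigma)_{\sigma\in\Sigma}$ and $(d_\tau)_{\tau\in\Sigma}$. Equation \eqref{productte23q22} — the restatement of Proposition \ref{multiplicative}, valid because Lemma \ref{nodiff} identifies $\circ_\Sigma$ with composition of the maps $\Gamma_f$ — asserts that these two bilinear forms coincide; since the monomials $(c,d)\mapsto c_\sigma d_\tau$ are linearly independent, matching coefficients gives exactly the displayed identity. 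Substituting it back into the first step completes the argument.

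The only thing requiring care — and it is bookkeeping rather than a genuine obstacle — is the order of composition: one must track that in $(C\circ_\Sigma D)_\rho = \sum_{\kappa\circ\tau=\rho} C_\tau D_\kappa$ the matrix $C_\tau$ is the left factor while the monoid element $\kappa$ ranges on the left, so that after renaming indices the two sides of \eqref{productte2322} are compared term by term with matching orientation of both the matrix product and the monoid product. Once the scalar identity among the $a_{i,j}^{l,\sigma}$ is in hand, the passage to general $V$ is an immediate substitution that never needs $C$ and $D$ to commute, which is precisely why the scalar case suffices.
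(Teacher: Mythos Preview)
Your proof is correct and follows essentially the same approach as the paper's: both reduce the general-$V$ statement to a numerical identity among the coefficients $a_{i,j}^{l,\sigma}$ and then read that identity off from the scalar case \eqref{productte23q22}. Your version is in fact slightly more explicit, writing down the identity $\sum_{p} a_{i,p}^{l,\sigma} a_{p,j}^{l,\tau} = a_{i,j}^{l,\tau\circ\sigma}$ directly, whereas the paper keeps the two coefficient systems abstract and compares them by evaluating at the test elements $\Delta^\kappa = (\delta_{\kappa,\iota}\Id_V)_\iota$; but this is a cosmetic difference, not a substantive one.
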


\begin{proof}
Equation \eqref{productte2322} is equivalent to
\begin{equation}\label{productte233}
\sum_{r=1}^{n_l} \Lambda^l_{i,r}(C)\Lambda^l_{r,j} (D)= \Lambda^l_{i,j}(C \circ_{\Sigma} D)\, ,
\end{equation}
for all $i,j \in \{1, \dots, n_l\}$. We will therefore fix $l \in \{1, \dots, k\}$ and a pair $(i,j)$ with $i,j \in \{1, \dots, n_l\}$. As $\Lambda_l$ was defined by generalising an expression for $V=\C$ to general $V$, we conclude that the left side of equation \eqref{productte233} may be written as
\begin{equation}\label{formallala}
\sum_{r=1}^{n_l} \Lambda^l_{i,r}(C)\Lambda^l_{r,j} (D) = \sum_{\sigma, \tau \in \Sigma}a_{\sigma, \tau} C_{\sigma}D_{\tau}\,
\end{equation}
for certain unique numbers $a_{\sigma, \tau} \in \C$, independent of $V$ and for all $C,D \in \lin(V,V)$. As the product $\circ_{\Sigma}$ likewise has the same formal expression for all $V$, we may similarly write
\begin{equation}\label{formallala3}
\Lambda^l_{i,j}(C \circ_{\Sigma} D) = \sum_{\sigma, \tau \in \Sigma}b_{\sigma, \tau} C_{\sigma}D_{\tau}\,
\end{equation}
for certain unique $b_{\sigma, \tau} \in \C$. It remains to show that $a_{\sigma, \tau} = b_{\sigma, \tau}$ for all $\sigma, \tau \in \Sigma$. To this end, we define $\Delta^{\kappa} \in \lin(V,V)$ by $(\Delta^{\kappa})_{\iota} = \delta_{\kappa, \iota}\Id_V$ for all $\kappa, \iota \in \Sigma$. It follows that
\begin{align}\label{formallala2}
\sum_{r=1}^{n_l} \Lambda^l_{i,r}(\Delta^{\sigma})\Lambda^l_{r,j} (\Delta^{\tau}) &= a_{\sigma, \tau}\Id_V\, ,\\ \nonumber
\Lambda^l_{i,j}(\Delta^{\sigma} \circ_{\Sigma} \Delta^{\tau}) &= b_{\sigma, \tau}\Id_V
\end{align}
for all $\sigma, \tau \in \Sigma$. We therefore conclude that equation \eqref{productte233} holds if it holds for all $C$ and $D$ whose entries are scalar multiples of the identity. However, this follows directly from equation \eqref{productte23q22}. More precisely, let us define $c\Id_V \in \lin(V,V)$ for $c \in \C^{\Sigma}$ by $(c\Id_V)_{\sigma} = c_{\sigma}\Id_V$ for all $\sigma \in \Sigma$. It follows that for all $c,d \in \C^{\Sigma}$ we have
 \begin{align}\label{productte234}
\sum_{r=1}^{n_l} \Lambda^l_{i,r}(c\Id_V)\Lambda^l_{r,j} (d\Id_V) &= \sum_{r=1}^{n_l} \Lambda^l_{i,r}(c)\Lambda^l_{r,j} (d)\Id_V \\ \nonumber
&= \Lambda^l_{i,j}(c \circ_{\Sigma} d)\Id_V = \Lambda^l_{i,j}([c\Id_V] \circ_{\Sigma} [d\Id_V])\, ,
\end{align}
where in the second step we have used equation \eqref{productte23q22}. This proves the proposition.
\end{proof}

\noindent Our final step will be to generalise the result of Lemma \ref{nodiff} to any constructible network.

\begin{prop}\label{third-half}
Let $\circ_{\Sigma}$ be the product on linear response functions defined by 
\begin{equation}\label{productte23q2116}
\Gamma_f \circ \Gamma_g = \Gamma_{f \circ_{\Sigma} g}\, ,
\end{equation}
for $f,g: V^{\Sigma} \rightarrow V$, or equivalently by Lemma \ref{nodiff} on the coefficients of $f$, $g$ and $f \circ_{\Sigma} g$. Then for any constructible network $\mathcal{N} \in \mathfrak{C}_{\Sigma}$ we have
\begin{equation}\label{productte23q2116}
\gamma^{\mathcal{N}}_f \circ \gamma^{\mathcal{N}}_g = \gamma^{\mathcal{N}}_{f \circ_{\Sigma} g}\, .
\end{equation}
\end{prop}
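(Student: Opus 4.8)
The plan is to verify the asserted operator identity directly on components, so that the whole statement collapses to the product formula of Lemma \ref{nodiff} combined with the fact, recorded in Lemma \ref{stillcloded1}, that passing to a constructible quotient does not disturb composition of input maps. First I would fix a constructible network $\mathcal{N} \in \mathfrak{C}_{\Sigma}$ with node set $N$, and for $\sigma \in \Sigma$ write $\sigma_{\mathcal{N}} \colon N \to N$ for the corresponding input map. Writing the coefficients of $f$ and $g$ as $C = (C_{\sigma})_{\sigma \in \Sigma}$ and $D = (D_{\sigma})_{\sigma \in \Sigma}$ in $\lin(V,V)^{\Sigma}$, the admissible maps are, componentwise, $(\gamma^{\mathcal{N}}_f(x))_p = \sum_{\tau \in \Sigma} C_{\tau}\, x_{\tau_{\mathcal{N}}(p)}$ and likewise for $g$.

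Then I would simply compute, for $x \in V^{N}$ and $p \in N$,
\begin{align}
(\gamma^{\mathcal{N}}_f(\gamma^{\mathcal{N}}_g(x)))_p
&= \sum_{\tau \in \Sigma} C_{\tau}\, (\gamma^{\mathcal{N}}_g(x))_{\tau_{\mathcal{N}}(p)}
 = \sum_{\tau, \kappa \in \Sigma} C_{\tau} D_{\kappa}\, x_{\kappa_{\mathcal{N}}(\tau_{\mathcal{N}}(p))} \nonumber \\
&= \sum_{\tau, \kappa \in \Sigma} C_{\tau} D_{\kappa}\, x_{(\kappa \circ \tau)_{\mathcal{N}}(p)}
 = \sum_{\sigma \in \Sigma} \Big( \sum_{\substack{\tau,\kappa \in \Sigma \\ \kappa \circ \tau = \sigma}} C_{\tau} D_{\kappa} \Big)\, x_{\sigma_{\mathcal{N}}(p)}
 = (\gamma^{\mathcal{N}}_{f \circ_{\Sigma} g}(x))_p \, , \nonumber
\end{align}
where the third equality is Lemma \ref{stillcloded1} and the last is the definition of $\circ_{\Sigma}$ from Lemma \ref{nodiff}. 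This would finish the proof. An alternative route, if one prefers to avoid the componentwise bookkeeping, is to first establish the identity for the input networks $\mathcal{N}_p$: these are quotients of $\mathcal{F}$, hence (Lemma \ref{includedyeah}) conjugate via $\theta_p$ to the restrictions $\Gamma_{\bullet}|_{S_p}$ of fundamental-network maps to the robust synchrony space $S_p$, so the claim for $\mathcal{N}_p$ follows from $\Gamma_f \circ \Gamma_g = \Gamma_{f \circ_{\Sigma} g}$ and robustness of $S_p$; one then glues, using that $\psi_p \colon V^N \to V^{N_p}$ conjugates $\gamma^{\mathcal{N}}_{\bullet}$ with $\gamma^{\mathcal{N}_p}_{\bullet}$, to get $\psi_p \circ (\gamma^{\mathcal{N}}_f \circ \gamma^{\mathcal{N}}_g - \gamma^{\mathcal{N}}_{f \circ_{\Sigma} g}) = 0$ for every $p$, together with $\bigcap_{p \in N}\ker(\psi_p) = \{0\}$ because the input networks cover $\mathcal{N}$.

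There is no real obstacle here: the substantive content was already extracted in Lemma \ref{nodiff} (identifying the product $\circ_{\Sigma}$ at the level of the fundamental network) and in Lemma \ref{stillcloded1} (showing composition of input maps is preserved under passing to constructible networks). The only point requiring a little care is that the input space of every network in $\mathfrak{C}_{\Sigma}$ is indexed by the full monoid $\Sigma$, not by the possibly smaller collection of distinct input maps $\sigma_{\mathcal{N}}$; this is exactly what makes the formal sums over $\Sigma$ above — and hence the collapse of the double sum into $\circ_{\Sigma}$ — legitimate even when several $\sigma \in \Sigma$ induce the same map on $N$.
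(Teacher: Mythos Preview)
Your direct componentwise computation is correct and is a genuinely more elementary route than the one the paper takes. The paper proceeds via your ``alternative route'': it first pushes the identity forward to each input network $\mathcal{N}_p$ through the injective conjugacy $\theta_p$ with $\Gamma_{\bullet}|_{S_p}$, and then pulls it back through the surjective conjugacies $\psi_p$. Where you finish by observing that $\bigcap_{p \in N}\ker(\psi_p) = \{0\}$, the paper instead first invokes Corollary~\ref{identityplusmulti} to write the difference $\gamma^{\mathcal{N}}_f \circ \gamma^{\mathcal{N}}_g - \gamma^{\mathcal{N}}_{f \circ_{\Sigma} g}$ as an admissible map $\gamma^{\mathcal{N}}_h$, and then applies Lemma~\ref{backtoinptt1} to conclude that $\gamma^{\mathcal{N}}_h = 0$ from $\gamma^{\mathcal{N}_p}_h = 0$ for all $p$. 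Your kernel-intersection observation sidesteps both of these auxiliary results.

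The advantage of your primary argument is that it isolates the single algebraic fact actually at work --- Lemma~\ref{stillcloded1}, which says that $\sigma \mapsto \sigma_{\mathcal{N}}$ is a monoid homomorphism --- and makes the proof a three-line computation. The paper's approach, by contrast, keeps the dependence on the fundamental network explicit and fits the proposition into the broader pattern (used throughout Section~\ref{Appendix; The role of V}) of reducing statements about constructible networks to statements about $\mathcal{F}$ via the maps $\theta_p$ and $\psi_p$; this buys some conceptual uniformity with the surrounding arguments, and extends verbatim to non-linear response functions (as the paper remarks), whereas your direct computation relies on linearity to expand $\gamma^{\mathcal{N}}_f$ as a finite sum.
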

\noindent To prove Proposition \ref{second-half} we need the following easy lemma.

\begin{lem}\label{backtoinptt1}
Let $\mathcal{N}  = (N, \mathcal{T})$ be a homogeneous coupled cell network with asymmetric input. Denote by $\gamma_f^{\mathcal{N}}$ an admissible map for $\mathcal{N}$ and by $\gamma_f^{\mathcal{N}_p}$ an admissible map for the input network $\mathcal{N}_p$ of a node $p \in N$. For any linear response function $f: V^{\Sigma} \rightarrow V$, we have $\gamma_f^{\mathcal{N}} = 0$, if and if only if $\gamma_f^{\mathcal{N}_p} = 0$ for all $p \in N$.
\end{lem}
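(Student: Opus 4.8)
\textbf{Proof plan for Lemma \ref{backtoinptt1}.} The whole statement rests on the observation that the $q$-th component of an admissible map only "sees" the input network of $q$. Concretely, recall from Section \ref{Homogeneous Networks and the Fundamental Network} the surjective linear maps $\psi_p \colon V^N \rightarrow V^{N_p}$, given by $(\psi_p(x))_q = x_q$ for $q \in N_p$, which satisfy $\gamma_f^{\mathcal{N}_p} \circ \psi_p = \psi_p \circ \gamma_f^{\mathcal{N}}$ for every response function $f$. The plan is to prove the two implications separately, using $\psi_p$ in both directions.

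For the direction "$\gamma_f^{\mathcal{N}} = 0 \implies \gamma_f^{\mathcal{N}_p} = 0$ for all $p$", I would simply compute $\gamma_f^{\mathcal{N}_p} \circ \psi_p = \psi_p \circ \gamma_f^{\mathcal{N}} = \psi_p \circ 0 = 0$, and then invoke surjectivity of $\psi_p$ to conclude $\gamma_f^{\mathcal{N}_p} = 0$. This is immediate and needs no linearity beyond what is already assumed.

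For the converse, the key point is that $\pi_q \colon V^N \rightarrow V^I$, defined by $(\pi_q(x))_\sigma = x_{\sigma(q)}$, only involves coordinates indexed by nodes of the form $\sigma(q)$, and all such nodes lie in $N_q$ by definition of the input network. Hence $\pi_q$ factors as $\pi_q = \pi_q^q \circ \psi_q$, where $\pi_q^q \colon V^{N_q} \rightarrow V^I$ is the analogous projection for the input network $\mathcal{N}_q$. Consequently
\begin{equation}
(\gamma_f^{\mathcal{N}})_q = f \circ \pi_q = f \circ \pi_q^q \circ \psi_q = (\gamma_f^{\mathcal{N}_q})_q \circ \psi_q \, .
\end{equation}
Thus if $\gamma_f^{\mathcal{N}_q} = 0$ for every $q \in N$, then in particular its $q$-th component vanishes, so $(\gamma_f^{\mathcal{N}})_q = 0$ for every $q \in N$, i.e. $\gamma_f^{\mathcal{N}} = 0$. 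Since $q$ ranges over all of $N$, this finishes the argument.

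I do not anticipate a genuine obstacle here; the only thing to be careful about is justifying the factorization $\pi_q = \pi_q^q \circ \psi_q$, which amounts to checking that $\sigma(q) \in N_q$ for all $\sigma \in \mathcal{T}$ (indeed for all $\sigma \in \Sigma$ in the constructible-network setting), and this is exactly the defining property of $N_q$ as a set closed under all input maps. Everything else is a one-line computation.
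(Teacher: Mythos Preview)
Your proof is correct and follows essentially the same approach as the paper: both directions use the surjective intertwiners $\psi_p$ satisfying $\gamma_f^{\mathcal{N}_p} \circ \psi_p = \psi_p \circ \gamma_f^{\mathcal{N}}$. For the converse, the paper picks a node $p$ and a node $s \in N_p$ and reads off $[\gamma_f^{\mathcal{N}}(v)]_s = [\psi_p \circ \gamma_f^{\mathcal{N}}(v)]_s = [\gamma_f^{\mathcal{N}_p}(\psi_p(v))]_s = 0$, whereas you specialise to $p = q$ and phrase the same computation as the factorisation $\pi_q = \pi_q^q \circ \psi_q$; this is a purely cosmetic difference.
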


\begin{proof}
Recall that for any node $p \in N$ there is a surjective linear map $\psi_p: V^{\mathcal{N}} \rightarrow V^{\mathcal{N}_p}$ such that $\gamma_f^{\mathcal{N}_p} \circ \psi_p =  \psi_p \circ \gamma_f^{\mathcal{N}}$. It follows that $\gamma_f^{\mathcal{N}_p} \circ \psi_p = 0$ whenever $\gamma_f^{\mathcal{N}} = 0$. As  $\psi_p$ is surjective, we conclude that $\gamma_f^{\mathcal{N}_p} = 0$ for all $p \in N$ whenever $\gamma_f^{\mathcal{N}} = 0$. Conversely, suppose $\gamma_f^{\mathcal{N}_p} = 0$ for all $p \in N$, and let $v = (v_q)_{q \in N} \in V^{\mathcal{N}}$ be given. We furthermore pick a node $p \in N$ and a node $s$ in the input network of $p$. It follows that 
\begin{equation}
0 = [\gamma_f^{\mathcal{N}_p} \circ \psi_p(v)]_s =  [\psi_p \circ \gamma_f^{\mathcal{N}}(v)]_s = [\gamma_f^{\mathcal{N}}(v)]_s\, .
\end{equation}
As the nodes $p$ and $s$ may be chosen freely, we conclude that $[\gamma_f^{\mathcal{N}}(v)]_s = 0$ for all $s \in N$ and $v \in V^{\mathcal{N}}$. It follows that $\gamma_f^{\mathcal{N}} = 0$, which concludes the proof.
\end{proof}

\begin{proof}[Proof of Proposition \ref{third-half}]
Recall that any input network of $\mathcal{N} \in \mathfrak{C}_{\mathcal{F}}$ may be realised as a quotient network of $\mathcal{F}$. This means that for every node $p$ of $\mathcal{N}$ there exists an injective linear map $\theta_p: V^{\mathcal{N}_p} \rightarrow V^{\Sigma}$ such that $\Gamma_f \circ \theta_p = \theta_p \circ \gamma_f^{\mathcal{N}_p}$ for all response functions $f: V^{\Sigma} \rightarrow V$. It follows that 
\begin{align}
\theta_p \circ (\gamma_f^{\mathcal{N}_p} \circ \gamma_g^{\mathcal{N}_p} - \gamma_{f \circ_{\Sigma} g}^{\mathcal{N}_p}) &= \Gamma_f \circ \theta_p \circ \gamma_g^{\mathcal{N}_p} - \Gamma_{f \circ_{\Sigma} g} \circ  \theta_p \\ \nonumber
&=  \Gamma_f  \circ \Gamma_g \circ \theta_p- \Gamma_{f \circ_{\Sigma} g} \circ  \theta_p \\ \nonumber
&=   (\Gamma_f  \circ \Gamma_g  - \Gamma_{f \circ_{\Sigma} g}) \circ  \theta_p = 0 \circ \theta_p = 0\, .
\end{align}
As $\theta_p$ is injective, we conclude that $\gamma_f^{\mathcal{N}_p} \circ \gamma_g^{\mathcal{N}_p} = \gamma_{f \circ_{\Sigma} g}^{\mathcal{N}_p}$ for all response functions $f$ and $g$. \\
Next, for every node $p$ of $\mathcal{N}$ we have a surjective linear map $\psi_p: V^{\mathcal{N}} \rightarrow V^{\mathcal{N}_p}$ such that $\gamma_f^{\mathcal{N}_p} \circ \psi_p =  \psi_p \circ \gamma_f^{\mathcal{N}}$. It follows that
\begin{align}\label{trickblah1}
\psi_p \circ (\gamma_f^{\mathcal{N}} \circ \gamma_g^{\mathcal{N}} - \gamma_{f \circ_{\Sigma} g}^{\mathcal{N}}) &= \gamma^{\mathcal{N}_p}_f \circ \psi_p \circ \gamma_g^{\mathcal{N}} - \gamma_{f \circ_{\Sigma} g}^{\mathcal{N}_p} \circ  \psi_p \\ \nonumber
&=  \gamma^{\mathcal{N}_p}_f  \circ \gamma^{\mathcal{N}_p}_g \circ \psi_p- \gamma_{f \circ_{\Sigma} g}^{\mathcal{N}_p} \circ  \psi_p \\ \nonumber
&=   (\gamma^{\mathcal{N}_p}_f  \circ \gamma^{\mathcal{N}_p}_g  - \gamma^{\mathcal{N}_p}_{f \circ_{\Sigma} g}) \circ  \psi_p = 0 \circ \psi_p = 0\, .
\end{align}
From Corollary \ref{identityplusmulti} we know that $\gamma_f^{\mathcal{N}} \circ \gamma_g^{\mathcal{N}}$, and hence $\gamma_f^{\mathcal{N}} \circ \gamma_g^{\mathcal{N}} - \gamma_{f \circ_{\Sigma} g}^{\mathcal{N}}$, is again an admissible map for $\mathcal{N}$. We may therefore write
\begin{equation}
\gamma_f^{\mathcal{N}} \circ \gamma_g^{\mathcal{N}} - \gamma_{f \circ_{\Sigma} g}^{\mathcal{N}} = \gamma_{h}^{\mathcal{N}} \, ,
\end{equation}
for some linear response function $h: V^{\Sigma} \rightarrow V$. Equation \eqref{trickblah1} now tells us that $\psi_p \circ  \gamma_h^{\mathcal{N}} = 0$, and so that $\gamma_h^{\mathcal{N}_p} \circ \psi_p= 0$. Next, we conclude by surjectivity of $\psi_p$ that $\gamma_h^{\mathcal{N}_p} = 0$ for all nodes $p$ of $\mathcal{N}$. Finally, we conclude by Lemma \ref{backtoinptt1} that $\gamma_{h}^{\mathcal{N}} = 0$, so that indeed $\gamma_f^{\mathcal{N}} \circ \gamma_g^{\mathcal{N}} = \gamma_{f \circ_{\Sigma} g}^{\mathcal{N}}$. This proves the proposition.
\end{proof}

\begin{remk}
Proposition \ref{third-half} and Lemma \ref{backtoinptt1} still hold if one drops the condition that $f$ and $g$ are linear. The product $f \circ_{\Sigma} g$ may then again be defined by the equation $\Gamma_f \circ \Gamma_g = \Gamma_{f \circ_{\Sigma} g}$, after which the proofs can be copied almost verbatim. This leads us to suspect that many dynamical properties of an admissible (non-linear) vector field $\Gamma_f$ for $\mathcal{F}$ may be transferred to the corresponding admissible vector field $\gamma_f^{\mathcal{N}}$ for any constructible network ${\mathcal{N}} \in \mathfrak{C}_{\mathcal{F}}$. $\hfill \triangle$
\end{remk}

\section{Proof of Theorems \ref{main1} and \ref{main2}}

\noindent We will now prove the core result of this article.
\begin{thr}\label{mainn}
Let $\mathcal{N} = (N, \Sigma)$ be a constructible network for the complete fundamental network $\mathcal{F}$, and suppose we are given the network multipliers $\Lambda^l_{i,j}: \C^{\Sigma} \rightarrow \C$ for $l \in \{1, \dots, k\}$ and $i,j \in \{1, \dots, n_l\}$ from a decomposition of $\C^{\Sigma}$ into indecomposable representations. There exist non-negative integers $(m^{\mathcal{N}}_l)_{l=1}^k$ such that for any finite dimensional complex vector space $V$ and any linear response function $f:V^{\Sigma} \rightarrow V$ with coefficients $C = (C_{\sigma})_{\sigma \in \Sigma}$, the eigenvalues of $\gamma_f^{\mathcal{N}}$ are given by those of $\Lambda^1(C)$ ($m^{\mathcal{N}}_1$ times), together with those of $\Lambda^2(C)$ ($m^{\mathcal{N}}_2$ times) up to those of $\Lambda^k(C)$ ($m^{\mathcal{N}}_k$ times). 
\end{thr}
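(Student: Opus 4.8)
The plan is to combine the "trace half" of the theory (Theorem \ref{first-half}) with the "multiplicativity half" (Proposition \ref{second-half}), via the trace-determines-spectrum principle of Lemma \ref{sameeig}, exactly as in the proof of Proposition \ref{jacobsonforeigenvalues} but now over a general phase space $V$ and for a general constructible network $\mathcal{N}\in\mathfrak{C}_{\mathcal{F}}$. The integers $m^{\mathcal{N}}_l$ are the ones already produced in Theorem \ref{first-half}; the content of the present theorem is upgrading the trace identity to a statement about all of the spectrum (with algebraic multiplicities).

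\textbf{Step 1: fix the candidate matrix.} Given $V$ and a linear response function $f:V^{\Sigma}\to V$ with coefficients $C=(C_\sigma)_{\sigma\in\Sigma}$, let $C_f$ be the block matrix that is block-diagonal with $m^{\mathcal{N}}_1$ copies of $\Lambda^1(C)$, then $m^{\mathcal{N}}_2$ copies of $\Lambda^2(C)$, up to $m^{\mathcal{N}}_k$ copies of $\Lambda^k(C)$, and zeros elsewhere. By the dimension count $\sum_l m^{\mathcal{N}}_l n_l = \#N$ from Theorem \ref{first-half}, this is a square matrix of size $(\#N)\dim(V)$, the same as $\gamma_f^{\mathcal{N}}$. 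The goal is to show $\gamma_f^{\mathcal{N}}$ and $C_f$ have the same eigenvalues with algebraic multiplicities, for which by Lemma \ref{sameeig} it suffices to prove $\tr\big((\gamma_f^{\mathcal{N}})^n\big) = \tr\big((C_f)^n\big)$ for all $n\in\N$.

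\textbf{Step 2: compute both traces.} For the right-hand side, $(C_f)^n$ is block-diagonal with $m^{\mathcal{N}}_l$ copies of $\big(\Lambda^l(C)\big)^n$, so $\tr\big((C_f)^n\big) = \sum_{l=1}^k m^{\mathcal{N}}_l\,\tr\big((\Lambda^l(C))^n\big)$. For the left-hand side, the key observation is that $\mathcal{N}$ is (effectively) complete, so by Corollary \ref{identityplusmulti} the powers $(\gamma_f^{\mathcal{N}})^n$ are again admissible maps for $\mathcal{N}$; more precisely, by Proposition \ref{third-half} we have $(\gamma_f^{\mathcal{N}})^n = \gamma^{\mathcal{N}}_{f^{(n)}}$ where $f^{(n)}$ is the $n$-fold $\circ_\Sigma$-product of $f$ with itself, with coefficients $C^{(n)} := C\circ_{\Sigma}\dots\circ_{\Sigma}C$. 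Applying the trace identity of Theorem \ref{first-half} to the admissible map $\gamma^{\mathcal{N}}_{f^{(n)}}$ gives $\tr\big((\gamma_f^{\mathcal{N}})^n\big) = \sum_{l=1}^k m^{\mathcal{N}}_l\,\tr\big(\Lambda^l(C^{(n)})\big)$. Finally, Proposition \ref{second-half} (applied $n-1$ times) yields $\Lambda^l(C^{(n)}) = \big(\Lambda^l(C)\big)^n$, so the two traces agree for every $n$.

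\textbf{Step 3: conclude and identify multiplicities.} By Lemma \ref{sameeig}, $\gamma_f^{\mathcal{N}}$ and $C_f$ have identical eigenvalues counted with algebraic multiplicity, which is precisely the assertion: the spectrum of $\gamma_f^{\mathcal{N}}$ is the union of the spectra of the $\Lambda^l(C)$, with $\Lambda^l(C)$ appearing $m^{\mathcal{N}}_l$ times. Restating this in the notation of Theorem \ref{main1}, $M_\la(\gamma^{\mathcal{N}}_f) = \sum_{l=1}^k m^{\mathcal{N}}_l M_\la(\Lambda^l(C))$, and the $m^{\mathcal{N}}_l$ are independent of $V$ and $f$ since they come from Theorem \ref{first-half}. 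The main obstacle I anticipate is purely bookkeeping rather than conceptual: one must be careful that the formal network multipliers $\Lambda^l$ — defined via a fixed decomposition of $\C^{\Sigma}$ and then extended formally to $\lin(V,V)$-coefficients — genuinely satisfy the multiplicativity $\Lambda^l(C\circ_\Sigma D) = \Lambda^l(C)\Lambda^l(D)$ over arbitrary $V$ and the trace identity over arbitrary $V$; but both of these are exactly Propositions \ref{second-half} and \ref{first-half}, whose proofs already performed the "formal expression is independent of $V$" argument. The remaining properties claimed in Theorem \ref{main1} (points 1--3, the linear independence in point 2 which is Lemma \ref{indjoepie}, point 3 which is the $l=1$ isotypic component corresponding to the trivial summand, point 5's inequality $m^{\mathcal{P}}_l\le m^{\mathcal{M}}_l$ for quotients, which follows since $W_l\cap S_{\mathcal{P}}\subset W_l\cap S_{\mathcal{M}}$ under the inclusion of synchrony spaces, and $m^{\mathcal{M}}_1\ge 1$) and point 6 of Theorem \ref{main2} ($m^{\mathcal{F}}_l>0$, which is the $S=X$ case of Proposition \ref{steponee33}) and point 7 (Proposition \ref{second-half} and \ref{third-half}) are then assembled from the lemmas already in hand.
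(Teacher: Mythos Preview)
Your proof is correct and follows essentially the same route as the paper's own argument: take the integers $m^{\mathcal{N}}_l$ from Theorem \ref{first-half}, build the block-diagonal comparison matrix, use Proposition \ref{third-half} to write $(\gamma_f^{\mathcal{N}})^n$ as an admissible map and Proposition \ref{second-half} to write $(\Lambda^l(C))^n$ as $\Lambda^l$ applied to the $\circ_\Sigma$-power, so that Theorem \ref{first-half} matches the traces of all powers, and conclude via Lemma \ref{sameeig}. The only remark is that your Step 3 drifts into sketching the assembly of Theorems \ref{main1} and \ref{main2}, which is a separate statement from Theorem \ref{mainn}; the proof of \ref{mainn} itself ends once Lemma \ref{sameeig} has been invoked.
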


\begin{proof}
Our proof will follow along the same lines as that of Proposition \ref{jacobsonforeigenvalues}. Given a network $\mathcal{N} \in \mathfrak{C}_{\mathcal{F}}$, let the integers $(m^{\mathcal{N}}_l)_{l=1}^k$ be as in Theorem \ref{first-half}. Out of these numbers, we construct a block diagonal matrix $\Omega^{\mathcal{N}}_f$. Specifically, the blocks on the diagonal of $\Omega^{\mathcal{N}}_f$ are given by $\Lambda^1(C)$ ($m_1^{\mathcal{N}}$ times), $\Lambda^2(C)$ ($m_2^{\mathcal{N}}$ times) up to $\Lambda^k(C)$ ($m_k^{\mathcal{N}}$ times). Note that this makes sense, as the integers $m_l^{\mathcal{N}}$ are all non-negative. If we have $m_l^{\mathcal{N}} = 0$ for some $l \in \{1, \dots, k\}$, then we simply leave out $\Lambda^l(C)$. By construction of $\Omega^{\mathcal{N}}_f$, the proof is done if we can show that $\Omega^{\mathcal{N}}_f$ and $\gamma_f^{\mathcal{N}}$ have the same eigenvalues. We will show that this is indeed the case, by using Lemma \ref{sameeig}.\\
First of all, $\Omega^{\mathcal{N}}_f$ is a square matrix of size
\begin{equation}
\sum_{l=1}^k m_l^{\mathcal{N}} n_l \dim(V) \,.
\end{equation}
By Theorem \ref{first-half}, this number equals $\#N \dim(V)$, which is equal to the dimension of $V^{\mathcal{N}}$. Hence, $\Omega^{\mathcal{N}}_f$ and $\gamma_f^{\mathcal{N}}$ are matrices of the same size. Next, we note that the trace of $\Omega^{\mathcal{N}}_f$ is given by
\begin{equation}
\tr(\Omega^{\mathcal{N}}_f) = \sum_{l =1}^k m^{\mathcal{N}}_l \tr(\Lambda^l(C)) \, ,
\end{equation}
which by Theorem \ref{first-half} is equal to $\tr(\gamma^{\mathcal{N}}_f)$. It remains to show that $\tr([\Omega^{\mathcal{N}}_f]^n) = \tr([\gamma^{\mathcal{N}}_f]^n)$ for all $n \in \N$. To this end, we write  $\circ^n_{\Sigma}f$ for $f \circ_{\Sigma} f \circ_{\Sigma} \dots \circ_{\Sigma} f$ ($n$ times). This expression is well defined, as multiplication of elements of the form $\Gamma_f$ is associative. From this it follows that $\circ_{\Sigma}$ is associative on response functions as well. Likewise, we may write  $\circ^n_{\Sigma}C$ for $C \circ_{\Sigma} C \circ_{\Sigma} \dots \circ_{\Sigma} C$ ($n$ times). It follows from Proposition \ref{second-half} that $(\Lambda^l(f))^n = (\Lambda^l(C))^n = \Lambda^l(\circ^n_{\Sigma}C) = \Lambda^l(\circ^n_{\Sigma}f)$ for all $l \in \{1, \dots, k\}$ and $n \in \N$. From this we conclude that $(\Omega^{\mathcal{N}}_f)^n = \Omega^{\mathcal{N}}_{\circ^n_{\Sigma}f}$. Likewise, Proposition \ref{third-half} tells us that $(\gamma_f^{\mathcal{N}})^n = \gamma_{\circ^n_{\Sigma}f}^{\mathcal{N}}$ for all $n \in \N$. Putting this together, we get
\begin{equation}
\tr([\Omega^{\mathcal{N}}_f]^n) = \tr(\Omega^{\mathcal{N}}_{\circ^n_{\Sigma}f}) = \tr(\gamma_{\circ^n_{\Sigma}f}^{\mathcal{N}}) =  \tr([\gamma^{\mathcal{N}}_f]^n) \,,
\end{equation}
for all $n \in \N$. Here we have used in the second step that $\tr(\Omega^{\mathcal{N}}_{g}) = \tr(\gamma_{g}^{\mathcal{N}})$ for all response functions $g$. Hence, we may conclude by Lemma \ref{sameeig} that $\gamma^{\mathcal{N}}_f$ and $\Omega^{\mathcal{N}}_f$ have the same eigenvalues, counted with algebraic multiplicity. This concludes the proof.
\end{proof}

\noindent We may now prove our main theorems.
\begin{proof}[Proof of Theorems \ref{main1} and \ref{main2}]
Let $\mathcal{F} = (\Sigma, \Sigma)$ denote the fundamental network of the complete homogeneous network with asymmetric input $\mathcal{N}$. The class of networks $\mathfrak{C}_{\mathcal{N}}$ of Theorem \ref{main1} is of course just the class of constructible networks for $\mathcal{F}$ as given by Definition \ref{ConstructibleNetworks}. Likewise, the formal maps $\Lambda^l_{i,j}$ of Theorem \ref{main1} are the coefficients of the network multipliers as given by Definition \ref{netwoorkmulti}, using a decomposition of $\C^{\Sigma}$. The different parts of Theorems \ref{main1} and \ref{main2} follow from our results in the following way.
\begin{enumerate}
\item This follows directly from Proposition \ref{cloosed} and Remark \ref{remkconstrntwrks}.
\item This is the content of Lemma \ref{indjoepie}.
\item[4.] This follows from Theorem \ref{mainn}, using the numbers $(m^{\mathcal{N}}_l)_{l=1}^k$ as given there.
\item By identifying all the nodes in $\mathcal{N}$, we obtain a network $\mathcal{N}_0$ with a single node. As $\mathcal{N}_0$ is a quotient network of $\mathcal{N} \in \mathfrak{C}_{\mathcal{N}}$, it is constructible as well. Therefore, the trace of an admissible map for $\mathcal{N}_0$ may be expressed as the sum of the traces of some of the network multipliers. From this it follows that $\Lambda^1(C) = \Lambda_{1,1}^1(C) = \sum_{\sigma \in \Sigma}C_{\sigma}$ is necessarily a network multiplier.
\item[5.] The first part follows from Theorem \ref{first-half}. If $\mathcal{P}$ is a quotient network of $\mathcal{M} \in  \mathfrak{C}_{\mathcal{N}}$, then for any fixed value of $s \in \{1, \dots, k\}$ we may choose a response function $f_s: \C^{\Sigma} \rightarrow \C$ such that $\Lambda^l(f_s)= \delta_{s,l}\Id_{n_l}$ for all $l \in \{1, \dots, k\}$. It follows that $\gamma_{f_s}^{\mathcal{P}}$ and $\gamma_{f_s}^{\mathcal{M}}$ have the eigenvalue $1$ exactly $m^{\mathcal{P}}_sn_s$ and $m^{\mathcal{M}}_sn_s$ times, respectively.  As $\gamma_{f_s}^{\mathcal{P}}$ is the restriction of $\gamma_{f_s}^{\mathcal{M}}$ to some robust synchrony space, we conclude that $m^{\mathcal{P}}_sn_s \leq m^{\mathcal{M}}_sn_s$. From this we see that indeed $m^{\mathcal{P}}_s \leq m^{\mathcal{M}}_s$. As every constructible network $\mathcal{M}$ contains $\mathcal{N}_0$ as a quotient, it follows that in particular $m^{\mathcal{M}}_1 \geq 0$.
\item[6.] It follows from Proposition \ref{cloosed} that $\mathcal{F} \in \mathfrak{C}_{\mathcal{N}} = \mathfrak{C}_{\mathcal{F}}$. From the fact that $m^{\mathcal{F}}_l = \dim(W_l)$ for some indecomposable representation $W_l$ of $\C^{\Sigma}$, it follows that necessarily $m^{\mathcal{F}}_l > 0$ for all  $l \in \{1, \dots, k\}$.  See also Proposition \ref{steponee33}.
\item[7.] This follows from propositions \ref{second-half} and \ref{third-half}.
\end{enumerate}
\end{proof}

\section{Acknowledgements}

\begin{minipage}{0.8\textwidth}
This work is part of the research programme \textit{Designing Network Dynamical Systems through Algebra}, which is financed by the Dutch Research Council (NWO). \\
\end{minipage}
\begin{minipage}{0.2\textwidth}
\hfill \includegraphics[width=1.2cm]{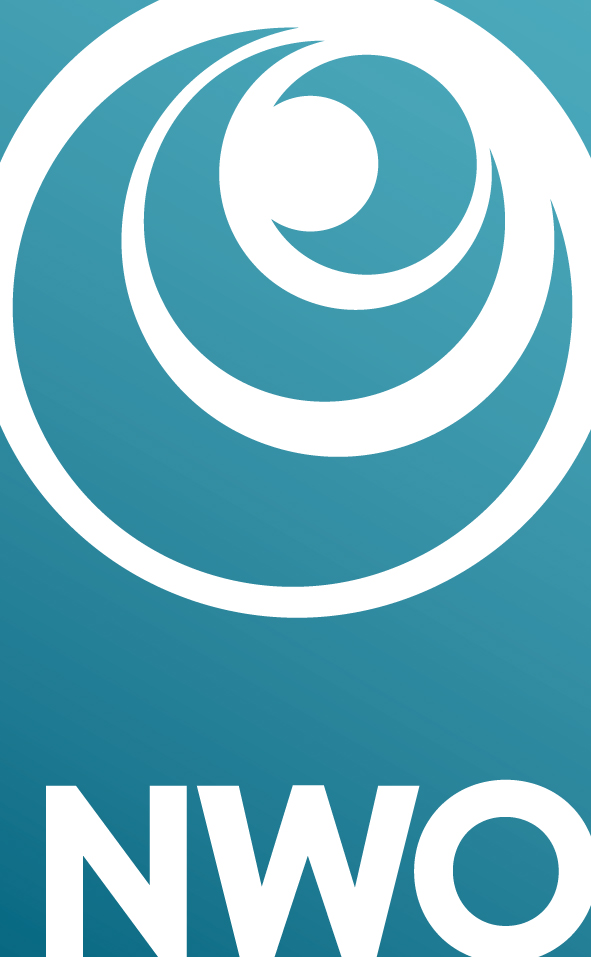}
\end{minipage}

\bibliography{networks}
\bibliographystyle{unsrt}

\end{document}